\documentclass[11pt]{articlefederico}

% Double spacing, if you want it.
% \def\dsp{\def\baselinestretch{2.0}\large\normalsize}
% \dsp

% If the Grad. Division insists that the first paragraph of a section
% be indented (like the others), then include this line:
% \usepackage{indentfirst}

\usepackage{comment}

\usepackage[left=1.5in,top=1.0in,right=1.5in]{geometry}

% needed for the proof environment
\usepackage{amsthm}
\usepackage{amsmath}
\usepackage{amssymb}
\usepackage{tikz}
\usepackage{cite}

% pictures
\usepackage{graphicx}

\newtheorem{theorem}[subsection]{Theorem}
\newtheorem{cor}[subsection]{Corollary}
\newtheorem{lemma}[subsection]{Lemma}
\newtheorem{prop}[subsection]{Proposition}

\newtheorem{quest}[subsection]{Question}

\theoremstyle{remark}
\newtheorem{exa}[subsection]{Example}
\newtheorem{rem}[subsection]{Remark}
\theoremstyle{deff}
\newtheorem{deff}[subsection]{Definition}

\newcommand{\conv}{\mathrm{conv}}
\newcommand{\R}{\mathbb{R}}
\newcommand{\vol}{\mathrm{Vol\,}}

%%%%%%%%%%%%%%%%%%%%%%%%%
%% Blackboard bold
%%%%%%%%%%%%%%%%%%%%%%%%%

\newcommand{\B}{{\cal B}}

\newcommand{\N}{{\cal N}}
\newcommand{\NN}{{\overline{\N}}}
\newcommand{\Nhat}{{\widehat{N}}}

\hyphenation{mar-gin-al-ia}

\begin{document}

\title{\textsf{Lifted generalized permutahedra and \\ composition polynomials.}}
\author{\textsf{Federico Ardila\footnote{\textsf{San Francisco State University, San Francisco, CA, USA, federico@sfsu.edu.}}}
\qquad \textsf{Jeffrey Doker\footnote{\textsf{University of California, Berkeley, Berkeley, CA, USA, jeff.doker@gmail.com.
\newline
%\footnote{\textsf{
This research was partially supported by the National Science Foundation CAREER Award DMS-0956178 (Ardila), the National Science Foundation Grant DMS-0801075 (Ardila), and  the SFSU-Colombia Combinatorics Initiative.}}}}
%}} 
%}}

\date{}

\maketitle

\begin{abstract}
Generalized permutahedra are the polytopes obtained from the permutahedron by changing the edge lengths while preserving the edge directions, possibly identifying vertices along the way. We introduce a ÒliftingÓ construction for these polytopes, which turns an $n$-dimensional generalized permutahedron into an $(n+1)$-dimensional one. We prove that this construction gives rise to Stasheff's multiplihedron from homotopy theory, and to the more general ÒnestomultiplihedraÓ, answering two questions of Devadoss and Forcey.

We construct a subdivision of any lifted generalized permutahedron whose pieces are indexed by compositions. The volume of each piece is given by a polynomial whose combinatorial properties we investigate. We show how this Òcomposition polynomialÓ arises naturally in the polynomial interpolation of an exponential function. We prove that its coefficients are positive integers, and present evidence suggesting that they may also be unimodal.
% We believe they may be unimodal.
% for all small enough compositions.
\end{abstract}

\begin{figure}[h]
\centering
\includegraphics[scale=.32]{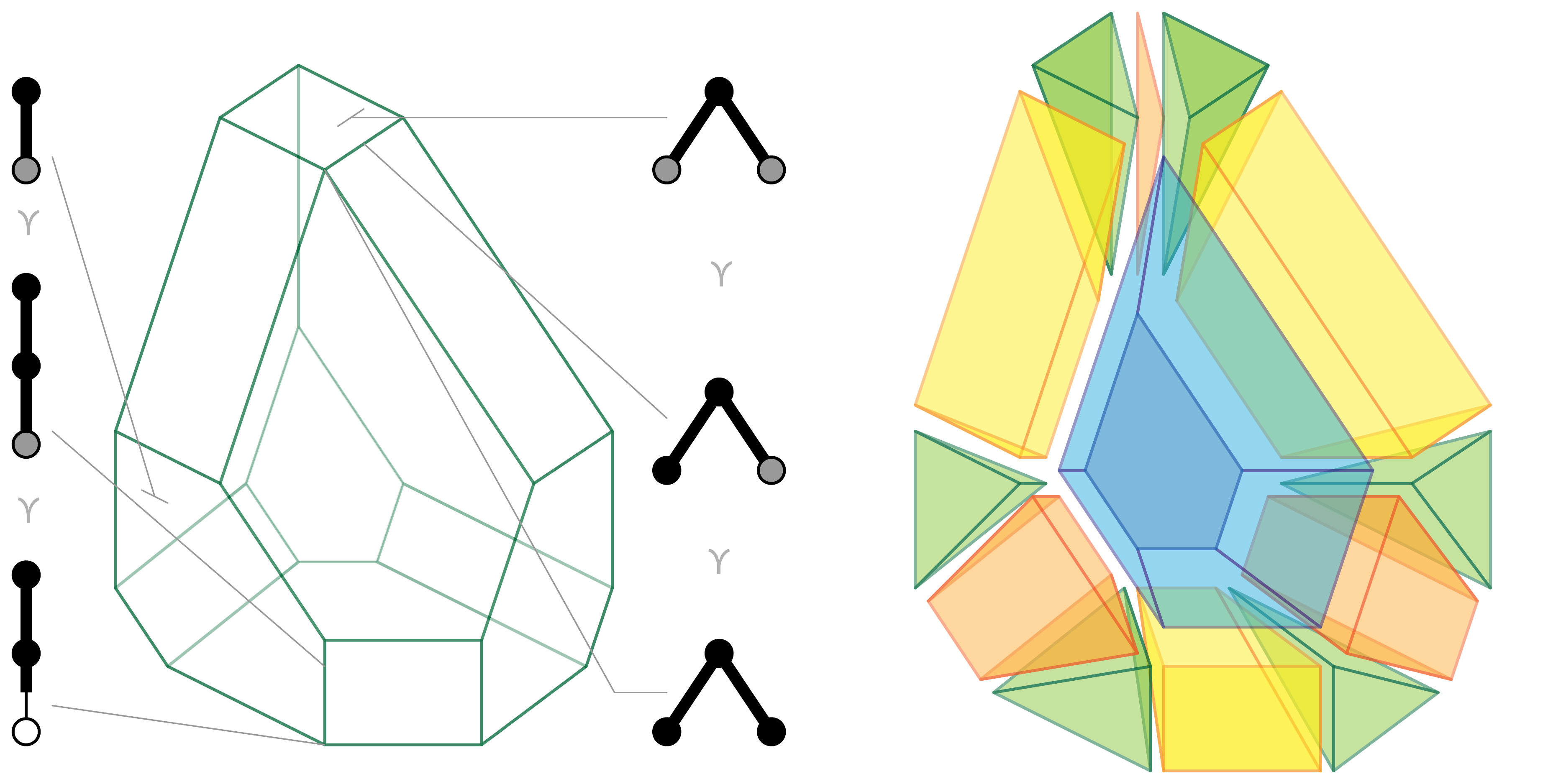} 
%\caption{The $q$-lifting of a generalized permutahedron $P_n(\{y_I\})$, shown projected onto the $3$-dimensional hyperplane $x_4=0$. \label{fig:qlift}}
\end{figure}

\noindent\textbf{Keywords:} Polytope, permutahedron, associahedron, multiplihedron, nestohedron, subdivision, composition polynomial, polynomial interpolation

\section{\textsf{Introduction.}}

Generalized permutahedra are the polytopes obtained from the permutahedron by changing the edge lengths while preserving the edge directions, possibly identifying vertices along the way. These polytopes, closely related to polymatroids \cite{Fujishige} and recently re-introduced by Postnikov \cite{Po} have been the subject of great attention due their very rich combinatorial structure. Examples include several remarkable polytopes which naturally appear in homotopy theory, in geometric group theory, and in various moduli spaces: permutahedra, matroid polytopes \cite{BGS}, Pitman-Stanley polytopes \cite{PS}, Stasheff's associahedra \cite{S}, Carr and Devadoss's graph associahedra \cite{CD},  Stasheff's multiplihedra \cite{S}, Devadoss and Forcey's multiplihedra \cite{DF}, and  Feichtner and Sturmfels's and Postnikov's  nestohedra \cite{Po, FS}.

In part 1 of the paper, we introduce a ÒliftingÓ construction which takes a generalized permutahedron $P$ in $\R^n$ into a generalized permutahedron $P(q)$ in $\R^{n+1}$, where $0 \leq q \leq 1$. We show that the lifting construction connects many important generalized permutahedra:

\bigskip

\begin{tabular}{|l|l|}
\hline
 generalized permutahedron $P$ &  lifting $P(q)$  \\
\hline\hline
 permutahedron $P_n$  & permutahedron $P_{n+1}$\\
\hline
 associahedron $\mathcal{K}_n$ & multiplihedron $\mathcal{J}_n$ \\
\hline
 graph associahedron $\mathcal{K}G$&  graph multiplihedron $\mathcal{J}G$ \\
\hline
 nestohedron $\mathcal{K}\B$ & nestomultiplihedron $\mathcal{J}\B$ \\ 
\hline
matroid polytope $P_M$ & independent set polytope $I_M \quad (q=0)$ \\
\hline
\end{tabular}

\bigskip

We provide geometric realizations of these polytopes and concrete descriptions of their face lattices. In particular, we answer two questions of  Devadoss and Forcey: we find the Minkowski decomposition of the graph multiplihedra $\mathcal{J}G$ into simplices, and we construct the nestomultiplihedron $\mathcal{J}\B$.

We also construct a subdivision of any lifted generalized permutahedron $P(q)$ whose pieces are indexed by compositions $c$. The volume of each piece is essentially given by a polynomial in $q$, which we call the \emph{composition polynomial} $g_c(q)$.

\medskip

Part 2 is devoted to the combinatorial properties of the composition polynomial $g_c(q)$ of a composition $c=(c_1, \ldots, c_k)$. We prove that $g_c(q)$ arises naturally in the polynomial interpolation of an exponential function. We also give a combinatorial interpretation of $g_c(q)$ in terms of the enumeration of linear extensions of a poset $P_c$. We prove that $g_c(q) = (1-q)^k f_c(q)$ where $f_c(q)$ is a polynomial with $f_c(1) \neq 0$. We prove that the coefficients of $f_c(q)$ are positive integers. We believe they may be unimodal as well; we have verified this for all 335,922 compositions of at most 7 parts and sizes of parts at most 6.

%
%We introduce the multiplihedron $\mathcal{J}(n)$ as an $(n-1)$-dimensional polytope that sits in a hyperplane in $\mathbb{R}^n$. We define $\mathcal{J}(n)$ as a specific Minkowski sum of coordinate simplices and show that its face lattice is isomorphic to the poset of painted trees as studied by Forcey \cite{Forcey}. Our definition of the multiplihedron is a simple extension of the Minkowski decomposition of the associahedron defined in \cite{Loday}, and many of the geometric properties of the associahedron are naturally generalized through this construction. The associahedron is a generalized permutahedron, and by our definition as a Minkowski sum of simplices the multiplihedron is as well \cite{Po}. 
%
%The machinery used to lift the associahedron to the multiplihedron is called the \emph{$q$-lifting operator}, and it can be applied to any generalized permutahedron. We explore this machinery and make precise the geometric properties of $q$-lifted polytopes, such as their face lattices, inequality descriptions, vertices, and volumes.

\newpage

%\part{
\noindent 
\begin{Large}
%\textsf{PART 1. Lifted generalized permutahedra.}
\textsf{PART 1. LIFTED GENERALIZED PERMUTOHEDRA.}
\end{Large}

\bigskip

The first part of the paper is devoted to the lifting construction, which turns an $n$-dimensonal generalized permutahedron $P$ into an $(n+1)$-dimensional one $P(q)$ which depends on a parameter $0 \leq q \leq 1$. 

In Section \ref{section:q-liftings} we introduce the $q$-lifting $P(q)$. We describe its defining inequalities, and its decomposition as a Minkowski sum of simplices. We show that all $q$-liftings $P(q)$ with $0 < q < 1$ are combinatorially isomorphic.

In Section \ref{section:q-faces} we study the face structure of the lifting of $P$. As a warmup, we show that the lifting of the permutahedron $P_n$ is the permutahedron $P_{n+1}$. We then describe the face lattice of $P(q)$ in terms of the face lattice of $P$. 

In Section \ref{section:nestohedra} we begin by recalling Postnikov's and Feichtner-Sturmfels's construction of the nestohedron ${\mathcal K}\B$, and their description of its face lattice in terms of $\B$-forests. We then show that the lifting of ${\mathcal K}\B$ is the nestomultiplihedron ${\mathcal J}\B$, whose face lattice we describe in terms of painted $\B$-forests. As special cases, we see how the multiplihedra ${\mathcal J}_n$ and the graph multiplihedra ${\mathcal J}G$ arise from the lifting construction.

In Section \ref{section: face q-liftings} we give a decomposition of the lifted generalized permutahedron $P(q) \subset \R^n$ whose pieces $P^{\pi}(q)$ are in bijection with the ordered partitions $\pi$ of $[n]$. We show that the volume of  $P^{\pi}(q)$ is essentially given by a polynomial in $q$, which is the subject of study of Part 2 of the paper.

\section{\textsf{Lifting a generalized permutahedron.}}\label{section:q-liftings}

The \emph{permutahedron} $P_n$ is the polytope in $\mathbb{R}^n$ whose $n!$ vertices are the permutations of the vector $(1,2,\dots,n)$. A \emph{generalized permutahedron} is a deformation of the permutahedron, obtained by moving the vertices of $P_n$ in such a way that all edge directions and orientations are preserved, while possibly identifying vertices along the way ~\cite{PRW}.

Postnikov showed \cite{Po} that every generalized permutahedron can be written in the form:
\[
P_n(\{z_I\}) = \left\{(t_1, \ldots, t_n) \in \R^n : \sum_{i=1}^n t_i =
z_{[n]}, \sum_{i \in I} t_i \geq z_I \textrm{ for all } I
\subseteq [n]\large\right\}
\]
where $z_I$ is a real number for each $I \subseteq [n]:=\{1, \ldots, n\}$, and
$z_{\emptyset}=0$. 
The following characterization was announced by Morton et. al. \cite[Theorem 17]{Morton} and Postnikov \cite{Po2}. A complete proof is written down in \cite{AA}; see also \cite[Chapter 44]{Sc}.%proved the forward direction of following criterion for the $z_I$:

\begin{theorem}%\cite[Theorem 17]{Morton}
\label{thm: Morton}
A set of parameters $\{z_I\}$ defines a generalized permutahedron $P_n(\{z_I\})$ if and only if the $z_I$ satisfy the supermodular inequalities for all $I, J \subseteq [n]$:
\[
z_I + z_J \leq z_{I\cup J} + z_{I\cap J}.
\]
\end{theorem}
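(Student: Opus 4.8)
The plan is to prove both directions by analyzing when the polytope $P_n(\{z_I\})$ is "tight" — that is, when each defining inequality $\sum_{i\in I}t_i\ge z_I$ is actually attained — and then to translate tightness into a statement about edge directions. The key background fact I would invoke is the submodular/supermodular dictionary: a collection $\{z_I\}$ is supermodular if and only if the function $I\mapsto z_I$ is the support function (in the relevant sense) of a polymatroid-type polytope, equivalently if and only if the greedy algorithm correctly optimizes linear functionals over $P_n(\{z_I\})$. The real content I need from Postnikov's theory is that $P_n$ itself, and more generally any deformation preserving edge directions, lies in this family; conversely, I want to show supermodularity forces the edge directions of $P_n$.

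First I would establish the "only if" direction. Suppose $\{z_I\}$ defines a generalized permutahedron, i.e.\ a deformation of $P_n$ with all edge directions among the $e_i-e_j$ preserved. I would argue that for a deformation, each inequality $\sum_{i\in I}t_i\ge z_I$ that is \emph{not} redundant defines a face, and the normal fan of $P_n(\{z_I\})$ is a coarsening of the braid fan (this is essentially the definition of a generalized permutahedron via normal fans, as in \cite{PRW}). Given a coarsening of the braid fan, the minimum of $\sum_{i\in I}t_i$ over the polytope is attained at the vertex (or face) corresponding to the ordered set partition that puts $I$ first; computing $z_{I\cup J}+z_{I\cap J}-z_I-z_J$ as a difference of such minima and using that the braid fan's chambers are linearly ordered refinements, one gets $\ge 0$. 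Concretely: pick a generic linear functional refining the partition $(I\cap J,\ I\setminus J,\ J\setminus I,\ \text{rest})$; the face it selects simultaneously minimizes the sums over $I\cap J$, over $I$, over $J$, and over $I\cup J$ (nested initial segments), so on that face all four inequalities are equalities, and then $z_{I\cup J}+z_{I\cap J}=(\sum_{i\in I\cup J}t_i)+(\sum_{i\in I\cap J}t_i)=(\sum_{i\in I}t_i)+(\sum_{i\in J}t_i)\ge z_I+z_J$ on that face, with the left side equal to its $z$-values. This is the standard "uncrossing" argument.

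For the "if" direction, assume the $z_I$ are supermodular and let $Q=P_n(\{z_I\})$. I would show directly that every edge of $Q$ is parallel to some $e_i-e_j$. The cleanest route is the greedy algorithm: for a generic linear functional $w=(w_1,\dots,w_n)$ with $w_{\sigma(1)}>\cdots>w_{\sigma(n)}$, I claim the point $t$ with $t_{\sigma(1)}=z_{\{\sigma(1)\}}$, $t_{\sigma(1)}+t_{\sigma(2)}=z_{\{\sigma(1),\sigma(2)\}}$, and in general $\sum_{k\le j}t_{\sigma(k)}=z_{\{\sigma(1),\dots,\sigma(j)\}}$, is the unique $w$-minimal vertex of $Q$ — feasibility of this point is exactly where supermodularity is used (it guarantees $\sum_{i\in I}t_i\ge z_I$ for all $I$, by the classic polymatroid greedy argument comparing $I$ to its "staircase" completion along the chain $\emptyset\subset\{\sigma(1)\}\subset\cdots$), and optimality is immediate since any feasible point has $\sum_{k\le j}t_{\sigma(k)}\ge z_{\{\sigma(1),\dots,\sigma(j)\}}$ with equality at $t$ for every $j$. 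Thus the vertices of $Q$ are exactly these greedy points, indexed by permutations $\sigma$ (with ties when chambers merge), and two vertices corresponding to adjacent permutations $\sigma,\sigma'$ differing by an adjacent transposition at positions $j,j+1$ differ only in coordinates $\sigma(j),\sigma(j+1)$ — so the segment between them is parallel to $e_{\sigma(j)}-e_{\sigma(j+1)}$. Since the edge graph of $Q$ is a quotient of that of $P_n$ and all edges point in braid directions, $Q$ is a generalized permutahedron.

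The main obstacle, and the step deserving the most care, is the feasibility claim in the "if" direction: showing that the greedy point $t$ actually satisfies $\sum_{i\in I}t_i\ge z_I$ for \emph{all} $I\subseteq[n]$, not merely for the initial segments of the chosen chain. This is precisely the point where supermodularity is indispensable, and it is the heart of the polymatroid/submodular-function theory; I would prove it by induction on $|I\triangle J|$ where $J=\{\sigma(1),\dots,\sigma(|I|)\}$ is the initial segment of the same size, each step swapping one element of $I$ out and one element of $J$ in and invoking a single supermodular inequality together with the defining equalities $\sum_{i\in J'}t_i=z_{J'}$ for initial segments $J'$. (One should also handle the non-generic $w$ and the identification-of-vertices bookkeeping, but those are routine once the vertex description is in hand.) I would then note that this argument also reproves Postnikov's normal-form statement and shows $z_{[n]}$ is the only "sum" constraint, completing the cycle.
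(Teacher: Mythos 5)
The paper itself does not prove this theorem: it is quoted as a known characterization due to Morton et al.\ and Postnikov, with the proof deferred to \cite{AA} and \cite[Chapter 44]{Sc}. Your sketch is essentially the standard polymatroid argument that appears in those sources (uncrossing for ``only if,'' the greedy algorithm for ``if''), and its architecture is sound, but two steps need repair. First, in the ``only if'' direction, the face selected by a functional of type $(I\cap J,\ I\setminus J,\ J\setminus I,\ \mathrm{rest})$ does \emph{not} simultaneously minimize $\sum_{i\in J}t_i$, since $J$ is not an initial union of blocks of that ordered partition; fortunately you only need the feasibility bound $\sum_{i\in J}t_i\ge z_J$ at that point, so your chain of (in)equalities still closes once you drop that claim. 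Relatedly, this direction silently requires that $z_{I\cup J}$ and $z_{I\cap J}$ be \emph{attained} minima: if some $z_I$ are merely slack valid lower bounds, the polytope can be a generalized permutahedron while supermodularity fails (e.g.\ a simplex $\{t_1+t_2+t_3=6,\ t_i\ge 1\}$ with the redundant constraints $t_i+t_j\ge 0$ appended), so the theorem must be read with the tightness convention $z_I=\min_{t\in P}\sum_{i\in I}t_i$. Second, the feasibility of the greedy point --- the step you rightly single out as the heart of the matter --- is not naturally an induction on $|I\triangle J|$ with one-in-one-out swaps, since a swap is not controlled by a single supermodular inequality; the clean induction is on $j=\max\{k:\sigma(k)\in I\}$, applying supermodularity to $I$ and the initial segment $S_{j-1}$ to get $z_I\le (z_{S_j}-z_{S_{j-1}})+z_{I\setminus\{\sigma(j)\}}=t_{\sigma(j)}+z_{I\setminus\{\sigma(j)\}}$ and recursing. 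With those two repairs your argument is complete and agrees with the cited proofs.
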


\begin{rem}\label{rem:shift}
By performing a parallel shift, we will assume that all our generalized permutahedra are in the positive orthant. In particular, this implies that $z_I \geq 0$ for all $I \subseteq [n]$, and that $z_I \leq z_J$ for $I \subseteq J \subseteq [n]$.
\end{rem}

We now introduce \emph{lifting}, a procedure which converts a generalized permutahedron in $\mathbb{R}^n$ into a \emph{lifted generalized permutahedron} in $\mathbb{R}^{n+1}$.

\begin{deff}
%\begin{prop}
\label{prop: z_I}
Given a generalized permutahedron $P=P_n(\{z_I\})$ in $\mathbb{R}^n$ and  a number $0 \leq q \leq 1$, let the \emph{$q$-lifting} of $P$ be the polytope $P(q)$ given by the inequalities
\[
\sum_{i=1}^{n+1} t_i = z_{[n]},  \qquad
 \sum_{i \in I} t_i \geq qz_I \textrm{ for } I \subseteq [n], \qquad
\sum_{i \in I \cup \{n+1\}} t_i \geq z_I \textrm{ for  } I \subseteq [n]. 
\]
In other words, $P(q):=P_{n+1}(\{z'_I\})$ where $z'_J = qz_J$ and $z'_{J\cup \{n+1\}} = z_J$ for $J\subseteq [n]$. The polytope $P(q)$ is called a \emph{lifted generalized permutahedron}.
%\begin{eqnarray*}
%%P_n(\{z_I\}) = \left\{(t_1, \ldots, t_n) \in \R^n : \sum_{i=1}^n t_i &=& z_{[n]},  \\
%% \sum_{i \in I} t_i &\geq& z_I \textrm{ for all } I \subseteq [n] \large\right\}
%\sum_{i=1}^n t_i &=& z_{[n]},  \\
% \sum_{i \in I} t_i &\geq& z_I \textrm{ for all } I \subseteq [n]  \\
%\sum_{i \in I \cup \{n+1\}} t_i &\geq& z_I \textrm{ for all } I \subseteq [n] 
% \end{eqnarray*}
%

We will let the \emph{lifting} of $P$ refer to any $q$-lifting with $0<q<1$. We will see in Corollary \ref{cor:lifting} that all such $q$-liftings are combinatorially isomorphic.%\end{prop}
\end{deff}

\begin{prop}
If $P$ is a generalized permutahedron, then its $q$-lifting $P(q)$ is a generalized permutahedron. 
\end{prop}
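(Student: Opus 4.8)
The plan is to use Theorem~\ref{thm: Morton}: it suffices to verify that the parameters $\{z'_I\}$ defining $P(q) = P_{n+1}(\{z'_I\})$ satisfy the supermodular inequalities $z'_I + z'_J \leq z'_{I\cup J} + z'_{I\cap J}$ for all subsets $I, J \subseteq [n+1]$. There are three cases according to how many of $I, J$ contain the element $n+1$.

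First, if neither $I$ nor $J$ contains $n+1$, then $z'_I = qz_I$, $z'_J = qz_J$, $z'_{I\cup J} = qz_{I\cup J}$, $z'_{I\cap J} = qz_{I\cap J}$, and the desired inequality is just $q$ times the supermodular inequality for $P$, which holds since $q \geq 0$. Second, if both $I$ and $J$ contain $n+1$, write $I = I_0 \cup \{n+1\}$ and $J = J_0 \cup \{n+1\}$ with $I_0, J_0 \subseteq [n]$; then $z'_I = z_{I_0}$, $z'_J = z_{J_0}$, $z'_{I \cup J} = z_{I_0 \cup J_0}$, $z'_{I \cap J} = z_{I_0 \cap J_0}$, and the inequality is exactly the supermodular inequality for $P$ applied to $I_0$ and $J_0$.

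The only case that requires a new idea is when exactly one of the two sets contains $n+1$, say $I = I_0 \cup \{n+1\}$ with $I_0 \subseteq [n]$ and $J \subseteq [n]$. Then $I \cup J = (I_0 \cup J) \cup \{n+1\}$ contains $n+1$ while $I \cap J = I_0 \cap J \subseteq [n]$ does not, so the inequality to be checked becomes
\[
z_{I_0} + q z_J \leq z_{I_0 \cup J} + q z_{I_0 \cap J}.
\]
I would derive this by combining two facts. From supermodularity of $P$ applied to $I_0$ and $J$ we get $z_{I_0} + z_J \leq z_{I_0 \cup J} + z_{I_0 \cap J}$, i.e. $z_{I_0} - z_{I_0 \cup J} \leq z_{I_0 \cap J} - z_J \leq 0$, where the last inequality uses monotonicity $z_{I_0\cap J} \le z_J$ from Remark~\ref{rem:shift}. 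Hence $z_{I_0} - z_{I_0\cup J}$ is $\leq$ both $z_{I_0\cap J} - z_J$ and $0$; multiplying the quantity $z_{I_0 \cap J} - z_J \le 0$ by $q \in [0,1]$ only makes it larger (closer to $0$), so $z_{I_0} - z_{I_0\cup J} \le z_{I_0\cap J}-z_J \le q(z_{I_0\cap J} - z_J)$, which rearranges to exactly the claimed inequality. (Equivalently: the needed inequality is a convex combination, with weights $q$ and $1-q$, of the supermodular inequality for $(I_0, J)$ and the monotonicity inequality $z_{I_0} \le z_{I_0 \cup J}$.)

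The main obstacle — though a mild one — is this mixed case: one must notice that the relevant estimate is not a pure rescaling of supermodularity but a genuine interpolation between supermodularity and monotonicity, and that it is precisely the hypothesis $0 \le q \le 1$ together with the normalization of Remark~\ref{rem:shift} (all $z_I \ge 0$, hence $z_I \le z_J$ for $I \subseteq J$) that makes it go through. Once this observation is in hand the proof is a short computation, and boundary cases $q=0$ and $q=1$ are covered by the same argument.
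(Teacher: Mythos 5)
Your proof is correct and takes exactly the route the paper intends: the paper's entire proof is the sentence ``Keeping Remark~\ref{rem:shift} in mind, one easily checks that the hyperplane parameters $\{z'_I\}$ are supermodular,'' and your case analysis is precisely that check written out in full. In particular, you correctly identified that the only nontrivial case is the mixed one, where the inequality is the convex combination (with weights $q$ and $1-q$) of supermodularity and the monotonicity $z_{I_0} \le z_{I_0\cup J}$ supplied by Remark~\ref{rem:shift}.
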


\begin{proof}
Keeping Remark \ref{rem:shift} in mind, one easily checks that the hyperplane parameters $\{z'_I\}_{I\subseteq[n+1]}$ are supermodular.
\end{proof}

Notice that the $1$-lifting $P(1)$ is the natural embedding of $P$ in the hyperplane $x_{n+1}=0$ of $\mathbb{R}^{n+1}$. The $0$-lifting $P(0)
%An interesting special case is the $0$-lifting $\widetilde{Q}_n(\{z_I\}):
=P_{n+1}(\{z'_I\})$ is the generalized permutahedron in $\R^{n+1}$ defined by $z'_J = 0$ and $z'_{J\cup\{n+1\}} = z_J$ for all $J\subseteq[n]$. 

Recall that the \emph{Minkowski sum} of two polytopes $P$ and $Q$ in $\R^n$ is defined to be $P+Q:=\{p+q \, : \, p \in P, q \in Q\}$. The hyperplane parameters $\{z_I\}$ of generalized permutahedra are additive with respect to Minkowski sums \cite{ABD, Po}, so we have:

\begin{prop} \label{prop:q-lift}
For $0 \leq q \leq 1$, the $q$-lifting of any generalized permutahedron $P$ satisfies $P(q) = qP(1) + (1-q)P(0)$.
\end{prop}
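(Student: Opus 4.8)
The plan is to compute directly with hyperplane parameters, using two structural facts about the parametrization $P \mapsto \{z_I\}$: it is \emph{additive} under Minkowski sums, so that $P_{n+1}(\{z_I\}) + P_{n+1}(\{w_I\}) = P_{n+1}(\{z_I + w_I\})$, and it is \emph{homogeneous} under nonnegative scaling, so that $cP_{n+1}(\{z_I\}) = P_{n+1}(\{cz_I\})$ for $c \geq 0$. The first is the additivity cited from \cite{ABD, Po}; the second is immediate from Postnikov's description, since multiplying every coordinate $t_i$ and every parameter $z_I$ by the same $c \geq 0$ leaves each constraint $\sum_{i\in I} t_i \geq z_I$ and the equality $\sum_i t_i = z_{[n+1]}$ equivalent (and the case $c=0$ gives the one-point polytope at the origin, which is $P_{n+1}(\{0\})$). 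By the previous proposition $P$, $P(0)$, and $P(1)$ are all generalized permutahedra, so both facts apply to them.

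By Definition \ref{prop: z_I}, $P(1) = P_{n+1}(\{a_I\})$ with $a_J = z_J$ and $a_{J\cup\{n+1\}} = z_J$ for $J \subseteq [n]$, while $P(0) = P_{n+1}(\{b_I\})$ with $b_J = 0$ and $b_{J\cup\{n+1\}} = z_J$ for $J \subseteq [n]$. Since $0 \leq q \leq 1$, both $q$ and $1-q$ are nonnegative, so homogeneity together with additivity gives
\[
qP(1) + (1-q)P(0) = P_{n+1}(\{q a_I + (1-q) b_I\}).
\]
It now suffices to check that $q a_I + (1-q) b_I$ equals the parameter $z'_I$ of $P(q)$ for every $I \subseteq [n+1]$. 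For $I = J$ with $J \subseteq [n]$ this reads $q z_J + (1-q)\cdot 0 = q z_J = z'_J$, and for $I = J \cup \{n+1\}$ it reads $q z_J + (1-q) z_J = z_J = z'_{J\cup\{n+1\}}$. Hence $P(q)$ and $qP(1) + (1-q)P(0)$ have identical hyperplane descriptions, so they coincide.

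The argument has essentially no obstacle; the only points that warrant a word of care are that the scalars $q$ and $1-q$ must be nonnegative for the homogeneity step to apply, which is exactly the hypothesis $0 \leq q \leq 1$, and the boundary cases $q \in \{0,1\}$, in which one of the two Minkowski summands degenerates to a single point at the origin and the claimed identity reduces to the definitions of $P(0)$ and $P(1)$ already recorded in the text.
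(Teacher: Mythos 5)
Your proposal is correct and follows the same route as the paper, which deduces the identity directly from the additivity (and implicit homogeneity) of the hyperplane parameters $\{z_I\}$ under Minkowski sums; you have simply written out the parameter computation $q a_I + (1-q)b_I = z'_I$ that the paper leaves implicit.
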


\begin{figure}[h]
\centering
\includegraphics[scale=.25]{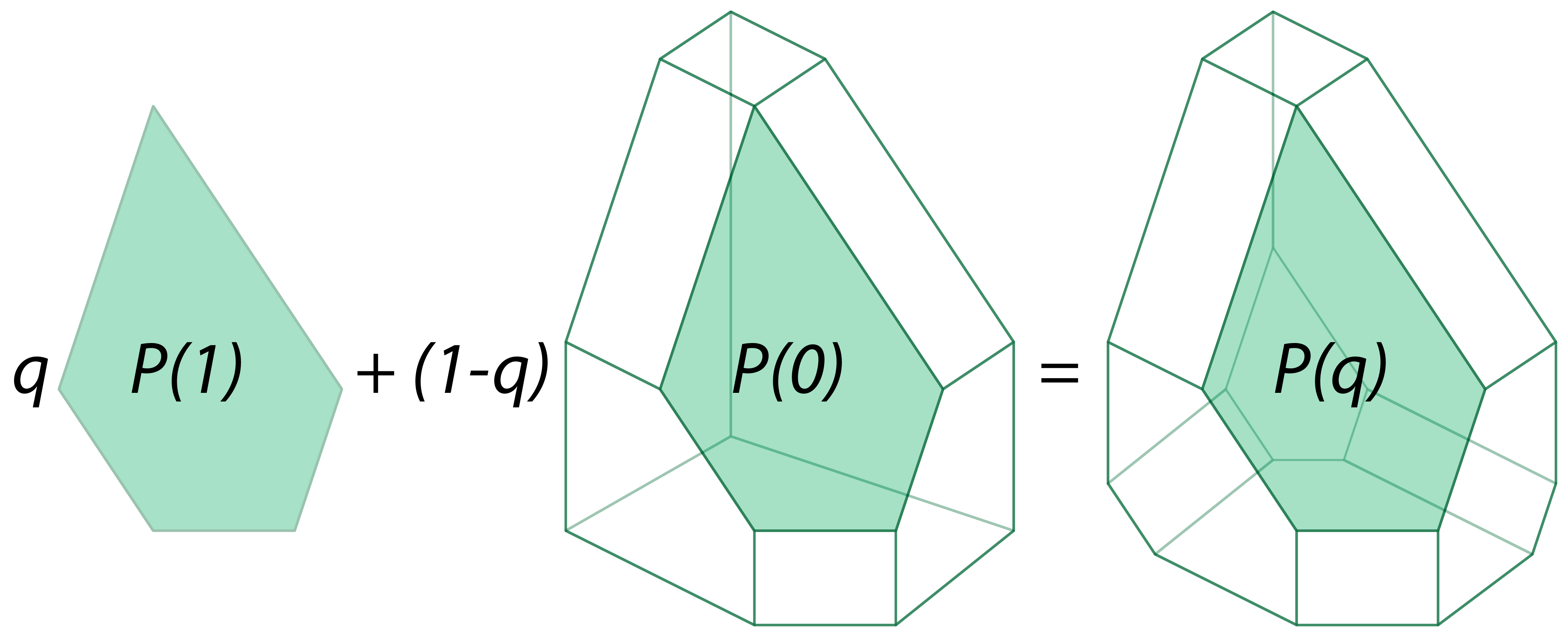} 
\caption{The $q$-lifting of a generalized permutahedron $P_n(\{y_I\})$, shown projected onto the $3$-dimensional hyperplane $x_4=0$. \label{fig:qlift}}
\end{figure}

\begin{cor}\label{cor:lifting}
All $q$-liftings of $P$ with $0 < q < 1$ are combinatorially isomorphic.
\end{cor}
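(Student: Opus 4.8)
The plan is to show that for any $0 < q < 1$, the lifting $P(q)$ has the same face lattice, and indeed that the combinatorial type is independent of $q$ in the open interval. By Proposition \ref{prop:q-lift} we have the Minkowski decomposition $P(q) = qP(1) + (1-q)P(0)$, so it suffices to understand how the face structure of a Minkowski sum $aA + bB$ with $a,b > 0$ depends on the scalars $a,b$. The standard fact I would invoke (or prove in a line) is that for polytopes $A$ and $B$ and positive reals $a, b$, the normal fan of $aA+bB$ equals the common refinement of the normal fans of $A$ and $B$; in particular it does not depend on the specific positive values $a$ and $b$. Since the combinatorial type of a polytope is determined by its normal fan, this immediately gives that all $P(q)$ with $0<q<1$ are combinatorially isomorphic — they all have normal fan equal to the common refinement of the normal fans of $P(0)$ and $P(1)$.

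More concretely, in the order I would carry this out: first, recall that a face of a Minkowski sum $A+B$ in direction $w$ is $(A+B)^w = A^w + B^w$, where $X^w$ denotes the face of $X$ maximizing the linear functional $\langle w, -\rangle$. Second, observe that $(aA)^w = a\cdot A^w$, so $(aA+bB)^w = aA^w + bB^w$; the set of linear functionals $w$ producing a given pair $(A^w, B^w)$ — i.e. the relative interior of a cone in the common refinement of the two normal fans — is the same regardless of $a, b > 0$. Hence the poset of faces, ordered by the reverse of the cone-containment order on the common refinement, is the same for all positive $a, b$. Applying this with $A = P(1)$, $B = P(0)$, $a = q$, $b = 1-q$ for $0 < q < 1$ finishes the argument. (The boundary cases $q = 0$ and $q = 1$ are genuinely different, since then one summand degenerates and the refinement collapses — which is why the statement is restricted to the open interval.)

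There is no serious obstacle here; the only point requiring a little care is making sure the normal-fan-of-a-Minkowski-sum statement is cited or justified cleanly, and noting that $P(0)$ and $P(1)$ are honest (possibly lower-dimensional) polytopes so that the Minkowski sum machinery applies without fuss — both live in the affine subspace $\sum t_i = z_{[n]}$ of $\R^{n+1}$, and one works with normal fans relative to that subspace. Alternatively, one could give an even more hands-on proof using the inequality description in Definition \ref{prop: z_I}: a face of $P(q)$ is cut out by turning some subset of the defining inequalities $\sum_{i\in I} t_i \geq q z_I$ and $\sum_{i \in I \cup \{n+1\}} t_i \geq z_I$ into equalities, and one checks that which collections of inequalities are simultaneously tight (and compatible) does not depend on $q \in (0,1)$ because scaling all the $z_I$-type right-hand sides by the same positive factor $q$ does not change the combinatorics of which faces are nonempty. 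I would present the Minkowski-sum argument as the clean proof and perhaps remark on the inequality viewpoint.
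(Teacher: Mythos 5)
Your proof is correct and is essentially the paper's own argument: both invoke Proposition \ref{prop:q-lift} and the fact that the normal fan of a Minkowski sum with positive coefficients is the common refinement of the summands' normal fans, hence independent of $q\in(0,1)$. The extra elaboration and the alternative inequality-based sketch are fine but not needed.
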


\begin{proof}
By Proposition \ref{prop:q-lift}, the normal fan of $P(q)$ is the common refinement of the normal fans of $P(0)$ and $P(1)$.
\end{proof}

For each $I \subseteq [n]$, consider the simplex $\Delta_I = \conv\{e_i \, : \, i \in I\}$. Any generalized permutahedron  $P=P_n(\{z_I\})$ can be written uniquely as a signed Minkowski sum of simplices in the form $P=P_n(\{y_I\}):=\sum y_I \Delta_I$ for $y_I \in \mathbb{R}$.\footnote{An equation like $P-Q=R$ should be interpreted as $P=Q+R$.} \cite{ABD, Po} The $z$-parameters and the $y$-parameters of $P$ are linearly related by the equations
\[
z_I = \sum_{J \subseteq I} y_J, \qquad \textrm{ for all } I \subseteq [n].
\]

\begin{prop} \label{prop:P(q)linear}
The $q$-lifting of the generalized permutahedron $P = \sum_I y_I \Delta_I$ is
\[
P(q) = q \sum_I y_I \Delta_I + (1-q) \sum_I y_I \Delta_{I \cup \{n+1\}}.
\]
%
%P_n(\{y_I\})$ is 
%If we write $P_n(\{z_I\})$ in terms of Minkowski sums as $P_n(\{y_I\})$, then the induced $\tilde{Q}$-polytope $\tilde{Q}_n(\{z'_I\})$ may be written
%\[
%\tilde{Q}_n(\{y'_I\}) = \sum_{I\subseteq [n]}y_I\Delta_{I\cup\{n+1\}}.
%\]
\end{prop}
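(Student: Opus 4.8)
The plan is to combine Proposition~\ref{prop:q-lift} with the linearity of the lifting operation and with the signed Minkowski decomposition $P = \sum_I y_I \Delta_I$. First I would record the special cases of the decomposition: applying the definition of $z'_I$ to $P(1)$ (the embedding in $x_{n+1}=0$) and to $P(0)$, and matching these against the $y$-parameters. Concretely, if $P = P_n(\{y_I\}) = \sum_{I\subseteq[n]} y_I \Delta_I$, then $P(1)$ has $z$-parameters $z'_J = z_J = \sum_{K\subseteq J} y_K$ and $z'_{J\cup\{n+1\}} = z_J$, and I would check that these are exactly the $z$-parameters of $\sum_{I\subseteq[n]} y_I \Delta_I$, where now each $\Delta_I$ is regarded as a simplex in $\R^{n+1}$ lying in $x_{n+1}=0$. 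Similarly $P(0)$ has $z'_J = 0$, $z'_{J\cup\{n+1\}} = z_J$, and I would verify these match the $z$-parameters of $\sum_{I\subseteq[n]} y_I \Delta_{I\cup\{n+1\}}$. Both verifications are the same bookkeeping with the subset-sum formula $z_I = \sum_{J\subseteq I} y_J$, using that $n+1\in I\cup\{n+1\}$ but $n+1\notin J$ whenever $J$ is a face-index not containing $n+1$.

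Granting those two identifications, the result is immediate from Proposition~\ref{prop:q-lift}, which says $P(q) = qP(1) + (1-q)P(0)$; substituting the two Minkowski expressions and using that scaling distributes over Minkowski sums (all the $y_I$ are real and signed Minkowski combinations are well-defined for generalized permutahedra by \cite{ABD, Po}) gives
\[
P(q) = q\sum_{I\subseteq[n]} y_I \Delta_I + (1-q)\sum_{I\subseteq[n]} y_I \Delta_{I\cup\{n+1\}},
\]
which is exactly the claimed formula. Alternatively, and perhaps more cleanly, one can argue purely at the level of $z$-parameters: both sides of the desired equation are generalized permutahedra in $\R^{n+1}$, so it suffices to check that their $z$-parameters agree on every subset of $[n+1]$, and the $z$-parameters are additive under (signed) Minkowski sums and scale linearly, so this reduces to the same two computations of $z(\Delta_I)$ and $z(\Delta_{I\cup\{n+1\}})$ in $\R^{n+1}$.

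The only genuinely delicate point is the sign issue: the expression $\sum_I y_I \Delta_I$ may be a \emph{signed} Minkowski sum, so I should make sure the manipulations (distributing $q$ and $1-q$ through the sums, and recombining) are legitimate for signed sums and not just honest ones. This is handled by the footnote convention in the excerpt — an identity of signed sums is shorthand for an identity of genuine polytopes after moving negative terms to the other side — together with the fact, cited from \cite{ABD, Po}, that the map from generalized permutahedra to their $z$-parameter vectors is injective and Minkowski-additive; so one proves the identity by comparing $z$-parameters, where everything is an honest linear identity in the real numbers $y_I$ and there is no sign subtlety at all. I expect this $z$-parameter route to be the main (and essentially only) step requiring care, and it amounts to a two-line check.
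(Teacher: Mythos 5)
Your proposal is correct and matches the paper's argument, which is exactly the one‑line observation that the identity follows from the linear relation $z_I=\sum_{J\subseteq I}y_J$ between the two parameter systems; your verification of the $z$-parameters of $\sum_I y_I\Delta_I$ and $\sum_I y_I\Delta_{I\cup\{n+1\}}$ (and the routing through Proposition~\ref{prop:q-lift}, which itself rests on the same additivity) is just a fleshed-out version of that same computation.
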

\begin{proof}
This follows directly from the linear relation between the $z_I$ and the $y_I$. 
\end{proof}
%
%We will now combine $P_n(\{y_I\})$ and $\tilde{Q}_n(\{y_I\})$, therefore let us assume that $P_n(\{y_I\})$ is embedded in the hyperplane $x_{n+1}=0$ in $\mathbb{R}^{n+1}$.
%
%\begin{deff}\label{def: q-lifting}
%For a generalized permutahedron $P=P_n(\{y_I\}) $, along with a constant $q\in[0,1]$, define the polytope $P(q)\subset\mathbb{R}^{n+1}$ to be the Minkowski sum
%\begin{equation}\label{qlift}
%P(q) := qP_n(\{y_I\}) + (1-q)\tilde{Q}_n(\{y_I\}).
%\end{equation}
%We say that the generalized permutahedron $P(q)$ is the \emph{$q$-lifting} of $P$.
%\end{deff}

From these observations it follows that the face of $P(q)$ maximized in the  direction $(1,\dots,1,0)$ is a copy of $P$, while the face maximized in the opposite direction is a copy of $P$ scaled by $q$. The vertices of $P(q)$ will come from vertices of $P$, with a factor of $q$ applied to certain specific coordinates. We describe them in Section \ref{section: face q-liftings}.

%
%Postnikov \cite{Po} showed that any generalized permutahedron $P\subset \mathbb{R}^n$ can be written in terms of hyperplane parameters $\{z_I\}_{I\subseteq[n]}$ as 
%\[
%P = P_n(\{z_I\}) = \left\{x \in \mathbb{R}^n:\sum_{i\in I}x_i \geq z_I \text{ for } I\subset[n],\sum_{i=1}^n x_i = z_{[n]}\right\}.
%\]
%Given a Minkowski description of a generalized permutahedron $P=P_n(\{y_I\})$, the associated inequality description is expressed as $P=P_n(\{z_I\})$ where $z_I = \sum_{J\subseteq I}y_J$. Applying this to $P(q)$ we get the following immediate proposition:
%
%\begin{prop}\label{prop: z_I}
%Let $P=P_n(\{z_I\})$ be a generalized permutahedron with hyperplane parameters $\{z_I\}_{I\subseteq[n]}$. Then we can express $P(q)=P_{n+1}(\{z'_I\})$ as a generalized permutahedron with hyperplane parameters $\{z'_I\}_{I\subseteq[n+1]}$ given by $z'_J = qz_J$ and $z'_{J\cup \{n+1\}} = z_J$ for $J\subseteq [n]$ and $q\in[0,1]$.
%\end{prop}

\section{\textsf{Faces of lifted generalized permutahedra.}}\label{section:q-faces}

We now look into the face structure of lifted generalized permutahedra. An important initial observation is that their face lattices are always coarsenings of the face lattice of the permutahedron $P_n$ \cite{Morton, Po, PRW}. %Moreover, all facets of a generalized permutahedron are parallel to facets of $P_n$. We discuss these properties in more detail in Section \ref{section: deformations}.

\begin{deff}
Consider the linear functional $f(x_1,\dots,x_n) = a_1x_1+\cdots+a_nx_n$. We partition $[n]$ into blocks $\pi_1,\dots,\pi_k$ such that $a_i=a_j$ if and only if $i$ and $j$ both belong to the same block $\pi_s$, and $a_i<a_j$ if and only if $i\in \pi_s$ and $j\in \pi_t$ for some $s<t$. If we let $\pi = \pi_1|\cdots|\pi_k$ %be an ordered partition of $[n]$ then we label $f(x)$ as $f_\pi(x)$ and 
then we say that the functional $f$ is of \emph{type} $\pi$. Slightly abusing notation, we write $f(x)$ as $f_\pi(x)$. For a generalized permutahedron $P$ in $\R^n$, the face of $P$ maximizing $f$ only depends on $\pi$, and we call it %We denote the face of a polytope $P$ that maximizes $f_\pi(x)$ by 
$P_\pi$.
\end{deff}

The following properties of the maximal face $P_\pi$ are known  \cite{Morton, Po, PRW}  and will be very important to us throughout the paper:

\begin{prop}\label{prop:props}
Let $\pi=\pi_1|\cdots|\pi_k$ be an ordered partition of $[n]$.
\begin{enumerate}
\item For a subset $I$ of $[n]$, the $\pi$-maximal face of the simplex $\Delta_I$ is $(\Delta_I)_\pi = \Delta_{I \cap \pi_{j(I)}}$, where $j(I) = \max\{j \, : \, I \cap \pi_j \neq \emptyset\}$. 
\item
The $\pi$-maximal face of the generalized permutahedron $P_n(\{y_I\}) = \sum_{I \subseteq [n]} y_I\Delta_I$ is
$ \left(\sum_{I \subseteq [n]} y_I\Delta_I\right)_\pi = 
\sum_{I \subseteq [n]} y_I\Delta_{I \cap \pi_{j(I)}}$.
\item
The $\pi$-maximal face of the generalized permutahedron $P_n(\{z_I\})$
is $(P_n(\{z_I\}))_\pi = P_1 \times \cdots \times P_k$, where $P_1 \in \R^{\pi_1}, \ldots, P_k \in \R^{\pi_k}$ are the generalized permutahedra $P_j = P(\{z_I^j\}_{I \subseteq \pi_j})$ given by $z_I^j = z_{\pi_1\cup \cdots \cup  \pi_{j-1} \cup I} - z_{\pi_1\cup \cdots \cup \pi_{j-1}}$ for $I \subseteq \pi_j$.
\end{enumerate}
\end{prop}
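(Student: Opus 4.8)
The plan is to prove the three statements in order, since each feeds into the next. For part (1), I would argue directly from the definition of $\Delta_I = \conv\{e_i : i \in I\}$. A linear functional $f_\pi$ evaluated at a vertex $e_i$ returns the coefficient $a_i$, and by definition of type $\pi$, this coefficient is constant on each block $\pi_j$ and strictly increasing in $j$. Hence among the vertices $\{e_i : i \in I\}$, the functional $f_\pi$ is maximized exactly on those $e_i$ with $i$ lying in the last block that $I$ meets, namely $\pi_{j(I)}$ with $j(I) = \max\{j : I \cap \pi_j \neq \emptyset\}$. The maximal face is the convex hull of those vertices, which is $\Delta_{I \cap \pi_{j(I)}}$. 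One should note $I \cap \pi_{j(I)} \neq \emptyset$ so this is genuinely a nonempty simplex.

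For part (2), I would invoke the general fact that taking the face maximized in a fixed direction is a linear operation with respect to Minkowski sums: for polytopes $Q_1, \dots, Q_m$ and scalars $c_i \ge 0$, $\left(\sum_i c_i Q_i\right)_\pi = \sum_i c_i (Q_i)_\pi$. Here some $y_I$ may be negative, but since signed Minkowski sums are defined via the convention $P - Q = R \iff P = Q + R$ and the face operation is compatible with genuine Minkowski sums on both sides, it extends to signed sums as well. Applying this to $P_n(\{y_I\}) = \sum_I y_I \Delta_I$ and using part (1) termwise gives $\left(\sum_I y_I \Delta_I\right)_\pi = \sum_I y_I (\Delta_I)_\pi = \sum_I y_I \Delta_{I \cap \pi_{j(I)}}$, as claimed.

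For part (3), I would first reduce to the two-block case $\pi = \pi_1 | \pi_2$ by iteration, or equivalently argue directly. The face $P_\pi$ is cut out by the equalities $\sum_{i \in \pi_1 \cup \cdots \cup \pi_j} t_i = z_{\pi_1 \cup \cdots \cup \pi_j}$ for each $j = 1, \dots, k$ (these are the facet inequalities forced to equality by any $f_\pi$-maximal point, since increasing any "prefix coordinate sum" strictly decreases $f_\pi$ on the constraint $\sum t_i = z_{[n]}$... more carefully: the defining inequality $\sum_{i\in I} t_i \ge z_I$ is tight for $I$ a union of initial blocks). These equalities force $P_\pi$ to split as a product $P_1 \times \cdots \times P_k$ living in $\R^{\pi_1} \times \cdots \times \R^{\pi_k}$, where the coordinates in block $\pi_j$ sum to $z_{\pi_1 \cup \cdots \cup \pi_j} - z_{\pi_1 \cup \cdots \cup \pi_{j-1}}$. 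The remaining inequalities, for $I \subseteq \pi_j$, read $\sum_{i \in \pi_1 \cup \cdots \cup \pi_{j-1}} t_i + \sum_{i \in I} t_i \ge z_{\pi_1 \cup \cdots \cup \pi_{j-1} \cup I}$, which after substituting the prefix equality becomes $\sum_{i \in I} t_i \ge z_{\pi_1 \cup \cdots \cup \pi_{j-1} \cup I} - z_{\pi_1 \cup \cdots \cup \pi_{j-1}} = z_I^j$; one also checks the supermodularity of $\{z_I^j\}_{I \subseteq \pi_j}$ is inherited from that of $\{z_I\}$, so $P_j$ is indeed a generalized permutahedron. I expect part (3) to be the main obstacle: the bookkeeping to confirm that exactly the prefix inequalities are tight — and that no other inequality is forced — and that the inequalities involving sets $I$ straddling several blocks are implied by the others, requires careful use of supermodularity (Theorem \ref{thm: Morton}) and of Remark \ref{rem:shift}. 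Since all three statements are attributed to \cite{Morton, Po, PRW}, I would also be prepared to simply cite those sources for the technical verifications rather than reproduce them in full.
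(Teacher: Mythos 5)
Your proposal is correct and follows essentially the same route as the paper: part (1) by direct inspection of the vertices of $\Delta_I$, part (2) via the compatibility of $\pi$-maximal faces with (signed) Minkowski sums, and part (3) via the observation that $P_\pi$ is cut out by the prefix equalities $\sum_{i \in \pi_1 \cup \cdots \cup \pi_j} t_i = z_{\pi_1 \cup \cdots \cup \pi_j}$, which forces the product structure. The paper's proof is just a terser version of the same argument; your additional care in part (3) about verifying the remaining inequalities via supermodularity is a legitimate filling-in of details the paper leaves implicit.
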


\begin{proof}
The first statement is clear, and the second one is implied by the fact that $(P+Q)_\pi = P_\pi + Q_\pi$ for any polytopes $P$ and $Q$. The third statement follows since $(P_n(\{z_I\}))_\pi$ consists of the points $x \in P_n(\{z_I\})$ such that $\sum_{i \in \pi_j}x_i = z_{\pi_1\cup \cdots \cup \pi_j} - z_{\pi_1\cup \cdots \cup \pi_{j-1}}$ for all $1 \leq j \leq k$.
\end{proof}

Recall that the face lattice $\mathcal{L}(P_n)$ of the permutahedron $P_n$ is isomorphic to the poset $(\mathcal{P}^n,\prec)$, where $\mathcal{P}^n$ is the set of all ordered partitions of the set $[n]$, and $\pi \prec\pi'$ if and only if $\pi'$ coarsens $\pi$ \cite{Zi}. First we show that the $q$-lifted permutahedron $P_n(q)$ is combinatorially equivalent to $P_{n+1}$. 

\begin{prop}
The lifting of the permutahedron $P_n$ is combinatorially equivalent to the permutahedron $P_{n+1}$.
\end{prop}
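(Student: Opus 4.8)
The plan is to exhibit an explicit Minkowski-sum description of the permutahedron $P_n$ as a sum of simplices, apply the lifting formula from Proposition \ref{prop:P(q)linear}, and recognize the result as (a translate/scaling of) the standard Minkowski-sum description of $P_{n+1}$. Recall that $P_n = \sum_{|I| \geq 2} \Delta_I$ up to translation, where the sum is over all subsets $I \subseteq [n]$ with at least two elements; equivalently $P_n = \sum_{I \subseteq [n]} y_I \Delta_I$ with $y_I = 1$ for $|I|\geq 2$ and $y_I = 0$ otherwise. (This is the classical fact that the permutahedron is the Minkowski sum of the line segments $\Delta_{\{i,j\}}$; the singleton simplices $\Delta_{\{i\}}$ are points and only contribute a translation.) So the $y$-parameters of $P_n$ are entirely supported on subsets of size at least two.

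First I would plug these $y$-parameters into Proposition \ref{prop:P(q)linear}, giving
\[
P_n(q) = q\sum_{|I|\geq 2,\ I\subseteq[n]} \Delta_I \;+\; (1-q)\sum_{|I|\geq 2,\ I\subseteq[n]} \Delta_{I\cup\{n+1\}}.
\]
Next I would compare this with the Minkowski decomposition of $P_{n+1}$, namely $P_{n+1} = \sum_{|J|\geq 2,\ J\subseteq[n+1]} \Delta_J$. Every subset $J\subseteq[n+1]$ with $|J|\geq 2$ is either (a) a subset of $[n]$ with $|J|\geq 2$, or (b) of the form $I\cup\{n+1\}$ with $I\subseteq[n]$ and $|I|\geq 1$. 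Thus $P_{n+1}$ involves exactly the simplices $\Delta_I$ ($|I|\geq 2$, $I\subseteq[n]$), the simplices $\Delta_{I\cup\{n+1\}}$ ($|I|\geq 2$), and additionally the segments $\Delta_{\{i,n+1\}}$ for $i\in[n]$, each with coefficient $1$. The lifting $P_n(q)$ has the same three families of summands but with coefficients $q$, $1-q$, and $0$ respectively (since the $|I|=1$ terms vanished, there is no $\Delta_{\{i,n+1\}}$ contribution). Since the normal fan of a Minkowski sum is the common refinement of the normal fans of the summands, and since for $0<q<1$ every summand with positive coefficient in $P_{n+1}$ (whether with coefficient $1$, or split as $q$ and $1-q$) still appears with positive coefficient — except the segments $\Delta_{\{i,n+1\}}$ — I would next need to argue that those missing segments do not change the combinatorial type. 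This is where the one genuine subtlety lies.

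The hard part will be showing that dropping the $\Delta_{\{i,n+1\}}$ summands does not coarsen the face lattice, i.e. that the normal fan of $\sum_{|I|\geq 2}\big(q\Delta_I + (1-q)\Delta_{I\cup\{n+1\}}\big)$ already refines the braid arrangement $A_{n+1}$, which is the normal fan of $P_{n+1}$. I would handle this by a direct argument using Proposition \ref{prop:props}: it suffices to check that for every ordered partition $\pi$ of $[n+1]$ the $\pi$-maximal face of $P_n(q)$ has the expected dimension $n+1-k$ (where $k$ is the number of blocks), equivalently that distinct ordered partitions give distinct faces. Using part (1) of Proposition \ref{prop:props}, each $\Delta_{I\cup\{n+1\}}$ contributes $\Delta_{(I\cup\{n+1\})\cap\pi_k}$ if $n+1\in\pi_k$; walking through the blocks from last to first one sees that the simplices $\Delta_I$ with $I\subseteq[n]$ of size $\geq 2$ together with the simplices $\Delta_{I\cup\{n+1\}}$ already "see" every pair $i\sim j$ that the segments $\Delta_{\{i,n+1\}}$ would detect, because whenever $\{i,n+1\}$ straddles two blocks there is a larger subset containing both $i$ and $n+1$ (namely add any third element, or use $\{i,j,n+1\}$) that also straddles them — the only place this could fail is $n=1$, which is trivial to check by hand. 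Concluding, for $0<q<1$ the normal fan of $P_n(q)$ equals the braid fan $A_{n+1}$, hence $P_n(q)$ is combinatorially equivalent to $P_{n+1}$, as claimed.
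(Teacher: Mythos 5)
Your overall strategy is the paper's: decompose $P_n$ as a Minkowski sum of simplices, lift term by term via Proposition \ref{prop:P(q)linear}, and show the normal fan of the lifting is the full braid fan in $\R^{n+1}$ by checking that every pair $\{b_1,b_2\}\subseteq[n+1]$ is detected by some summand. But there is a genuine gap, and it sits exactly at the point you flag as the one subtlety. You discard the singleton summands of $P_n$ on the grounds that they ``only contribute a translation.'' That is harmless for $P_n$ itself, but it is fatal for the lifting: the $q$-lifting sends the point $\Delta_{\{i\}}$ to the segment $(1-q)\Delta_{\{i,n+1\}}$, so the singletons are precisely the source of the segments whose absence you then try to explain away. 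Lifting does not commute with translation, even up to combinatorial equivalence (for $n=2$, the lifting of $\Delta_{12}$ is a quadrilateral while the lifting of its translate $P_2=\Delta_1+\Delta_2+\Delta_{12}$ is a hexagon), so ``up to translation'' cannot be waved through here. A secondary issue: $\sum_{|I|\geq 2}\Delta_I$ is not a translate of $P_n$ for $n\geq 3$; it is only combinatorially equivalent to it.

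Your proposed repair --- that any larger set $\{i,j,n+1\}$ straddling the same blocks detects what $\Delta_{\{i,n+1\}}$ would --- is false. By Proposition \ref{prop:props}(1), $(\Delta_S)_\pi=\Delta_{S\cap\pi_{j(S)}}$ where $\pi_{j(S)}$ is the \emph{last} block meeting $S$, so if the third element $j$ lies in a block after those of $i$ and $n+1$, merging the blocks of $i$ and $n+1$ changes nothing. Concretely, take $n=3$ and compare $\pi=1|4|2|3$ with its coarsening $\sigma=14|2|3$: every summand $q\Delta_I$ ($I\subseteq[3]$, $|I|\geq 2$) and $(1-q)\Delta_{I\cup\{4\}}$ has the same $\pi$- and $\sigma$-maximal face (namely $\Delta_2$ or $\Delta_3$), so your singleton-free polytope assigns both ordered partitions the same vertex $e_2+3e_3$, whereas $P_4$ assigns them a vertex and an edge. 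Hence that polytope is a strict combinatorial coarsening of $P_4$, and the argument cannot be completed from your decomposition. The fix is to use the exact decomposition $P_n=\sum_i\Delta_{\{i\}}+\sum_{i<j}\Delta_{\{i,j\}}$ (the paper's $P_n$, with vertices the permutations of $(1,\dots,n)$). Then the lifting contains every segment $\Delta_{\{b_1,b_2\}}$, $b_1,b_2\in[n+1]$, as an honest summand with positive coefficient, and for $b_1\in\pi_i$, $b_2\in\pi_{i+1}$ that single summand already forces $P_n(q)_\pi\subsetneq P_n(q)_\sigma$ upon merging $\pi_i$ and $\pi_{i+1}$ --- which is the paper's argument.
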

\begin{proof}
By definition $P_n(q)$ is a generalized permutahedron in $\mathbb{R}^{n+1}$, and hence its face lattice is a coarsening of the poset of ordered partitions on a set of size $n+1$. We will show that this coarsening is trivial; \emph{i.e.}, that every strict containment of faces in $P_{n+1}$ corresponds to a strict containment of faces in $P_n(q)$. 

The permutahedron $P_n$ is a zonotope, and it can be represented as the Minkowski sum of all coordinate 1-simplices $\Delta_{ij}$ for $1\leq i<j\leq n$. Using our established notation, we write $P_n = P_n(\{y_I\})$ where $y_I=1$ if $I$ has size $2$, and $y_I=0$ otherwise. Let $\pi = \pi_1|\cdots|\pi_k$ be an ordered partition of $[n+1]$, and let $P_n(q)_\pi$ be the corresponding maximal face of $P_n(q)$.
%let $f_\pi$ be a linear functional in $\mathbb{R}^{n+1}$ of type $\pi$. 
It suffices to show that any minimal coarsening $\sigma$ of $\pi$, obtained by joining blocks $\pi_i$ and $\pi_{i+1}$, leads to a different maximal face $P_n(q)_\sigma$.

For every pair $b_1, b_2\in [n+1]$ the Minkowski decomposition of $P_n(q)$ contains a simplex with $\Delta_{b_1 b_2}$ as a face. Take $b_1\in \pi_i$ and $b_2\in \pi_{i+1}$. Then the Minkowski decomposition of the face $P_n(q)_\sigma$ includes a one-dimensional contribution from $\Delta_{b_1 b_2}$, whereas the decomposition of $P_n(q)_\pi$ does not. Thus $P_n(q)_\pi$ is properly contained in $P_n(q)_\sigma$, as we wished to show. %We conclude that the face lattice is isomorphic to the poset of ordered partitions.
\end{proof}

Now we extend our focus to face lattices of general $q$-liftings. In the remainder of this section, we assume for simplicity that the generalized permutahedra $P$ we are analyzing are contained in the positive orthant $\mathbb{R}_{>0}^n$. 
%If this is not the case, we can simply project out the unused coordinate(s) so that this condition is satisfied. 

\begin{deff}
Let $P$ be a generalized permutahedron in $\mathbb{R}^n$, and let $\pi$ and $\mu$ be ordered partitions of $[n]$. Then we say that $\pi \sim \mu$ if $P_{\pi} = P_{\mu}$. 
We can write the face lattice of $P$ as
\[
\mathcal{L}(P) \cong \left({\mathcal{P}^n},\prec\right)/\sim.
\]
The order $\prec$ %is generated by cover relations 
on equivalence classes is as follows: the equivalence class $[\mu]$ covers $[\pi]$ if and only if %$P_\mu$ is one dimension greater than $P_\pi$ and some element of $[\mu]$ coarsens some element of $[\pi]$. Equivalently, this last statement states that 
there exist $\pi'\in[\pi]$ and $\mu'\in[\mu]$ such that 
$\mu'$ coarsens $\pi'$.
%$(P_n)_\pi \subset (P_n)_\mu$.
\end{deff}

We now describe the equivalence relation $\sim'$ on $\mathcal{P}^{n+1}$ induced by $P(q)$ in terms of the equivalence relation $\sim$ on $\mathcal{P}^{n}$ induced by $P$.

\begin{deff} \label{def: face lattice}
Let $\pi'$ and $\mu'$ be ordered partitions of $[n+1]$, and let $\pi=\pi_1|\cdots|\pi_k$ and $\mu=\mu_1|\cdots|\mu_l$ be the partitions of $[n]$ obtained by deleting $n+1$ from $\pi'$ and $\mu'$ respectively (and deleting the resulting empty block if $n+1$ was alone in its block). Let $a$ and $b$ be the indices of the blocks of $\pi$ and $\mu$ containing $n+1$, respectively. Then we say that $\pi' \sim' \mu'$ if $\pi \sim \mu$, $\pi_a = \mu_b$, and $\bigcup_{i>a}\pi_i = \bigcup_{i>b}\mu_i$.
\end{deff}

%
%\begin{deff} \label{def: face lattice}
%Let $\pi = \pi_1|\cdots|\pi_{k_1}$ and $\mu = \mu_1|\cdots|\mu_{k_2}$ be ordered partitions of $[n]$. Let $\pi'$ and $\mu'$ be obtained from 
%%Augment $\pi$ and $\mu$ to construct 
%$\pi'$ and $\mu'$ by adding the element $\{n+1\}$ to the (possibly new, in which case we relabel the blocks) blocks $\pi_{j_1}$ and $\mu_{j_2}$, respectively.
%Then we say that $\pi'\sim' \mu'$ if the following conditions hold:
%\begin{enumerate}
%\item $\pi \sim \mu$, and
%\item $\pi_{j_1}=\mu_{j_2}$ and 
%$\bigcup_{i>j_1}\pi_i = \bigcup_{i>j_2}\mu_i$.
%\end{enumerate}
%\end{deff}
%

\begin{prop}\label{prop: face lattice}
Let $P$ be a generalized permutahedron in the positive orthant $\mathbb{R}_{>0}^n$. Using the notation established above, the face lattice of $P(q)$ is given by
\[
\mathcal{L}(P(q)) \cong \left({\mathcal{P}^{n+1}},\prec\right)/\sim'.
\]
\end{prop}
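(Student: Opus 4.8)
The plan is to show that two ordered partitions $\pi',\mu'$ of $[n+1]$ satisfy $P(q)_{\pi'}=P(q)_{\mu'}$ if and only if $\pi'\sim'\mu'$ in the sense of Definition~\ref{def: face lattice}. Since $\mathcal{L}(P(q))\cong(\mathcal{P}^{n+1},\prec)/\!\!\sim'$ is then immediate from the general description of face lattices of generalized permutahedra together with the fact that the induced order on classes is the one coming from coarsening (exactly as in the preceding definition for $\mathcal{L}(P)$), the whole content is in identifying the fibers of the map $\pi'\mapsto P(q)_{\pi'}$.

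First I would compute $P(q)_{\pi'}$ explicitly. Write $P = \sum_{I\subseteq[n]} y_I\Delta_I$, so by Proposition~\ref{prop:P(q)linear} we have $P(q) = q\sum_I y_I\Delta_I + (1-q)\sum_I y_I\Delta_{I\cup\{n+1\}}$. Applying Proposition~\ref{prop:props}(1)--(2), the $\pi'$-maximal face is the Minkowski sum of the faces $(\Delta_I)_{\pi'}$ and $(\Delta_{I\cup\{n+1\}})_{\pi'}$; for the former this is $\Delta_{I\cap\pi'_{j}}$ where $j$ is the last block of $\pi'$ meeting $I$, and for the latter, since $n+1$ lies in block $a$ (in the indexing of $\pi'$, i.e.\ the block $\pi_a$ after deleting $n+1$), the relevant last block is $\max(a, j(I))$: if $I$ meets a block of $\pi'$ strictly after $\pi_a$, the $n+1$ is irrelevant and we again get $\Delta_{I\cap\pi'_{\text{last}}}$; if $I\subseteq\pi_1\cup\cdots\cup\pi_a$ (the blocks up to and including the one with $n+1$), then $(\Delta_{I\cup\{n+1\}})_{\pi'}=\Delta_{(I\cap\pi_a)\cup\{n+1\}}$. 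Grouping the terms, this realizes $P(q)_{\pi'}$ as a product $Q_1\times\cdots\times Q_a\times\cdots$ where the factor indexed by block $a$ is (a scaled/lifted copy of) a $q'$-lifting of a face of $P$ — concretely, using Proposition~\ref{prop:props}(3) applied to $P(q)=P_{n+1}(\{z'_I\})$ and the relations $z'_J=qz_J$, $z'_{J\cup\{n+1\}}=z_J$, the $a$-th factor is the $q$-lifting of the generalized permutahedron $P_a$ obtained by restricting $P$ to the block $\pi_a$ (after contracting the earlier blocks), and the remaining factors are exactly the factors of $P_\pi$ coming from blocks after $\pi_a$ (with the blocks before $\pi_a$ contributing points, not edges, since on $\pi_1\cup\cdots\cup\pi_{a-1}$ the $q$-scaled copy and the lifted copy coincide up to the scalar $q$).

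With this structure in hand, the face $P(q)_{\pi'}$ is determined precisely by: (i) the face $P_\pi$ of $P$, i.e.\ the class $[\pi]$ under $\sim$ — this records all the factors coming from blocks after $\pi_a$ as well as the lift-independent data of the earlier blocks; (ii) the block $\pi_a$ itself, since this is the fiber over which the genuine lifting parameter $q$ (as opposed to a flat scaling) acts and so its underlying set is recoverable; and (iii) the set $\bigcup_{i>a}\pi_i$ of coordinates living "strictly below" $n+1$, which together with $\pi_a$ pins down which coordinates got the factor $q$. Conversely these three data clearly recover $P(q)_{\pi'}$ from the formula above. This is exactly the conjunction $\pi\sim\mu$, $\pi_a=\mu_b$, $\bigcup_{i>a}\pi_i=\bigcup_{i>b}\mu_i$ defining $\sim'$, so $P(q)_{\pi'}=P(q)_{\mu'}\iff\pi'\sim'\mu'$. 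Finally, one checks $\sim'$ is genuinely an equivalence relation and that the covering relation induced on classes matches the stated one, both of which are routine given that $\prec$ on $\mathcal{P}^{n+1}$ is the coarsening order.

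The main obstacle I anticipate is step (ii): arguing rigorously that $\pi_a$ — the block of $\pi'$ containing $n+1$, minus $n+1$ — is an \emph{invariant} of the face $P(q)_{\pi'}$, not merely of $\pi'$. The subtlety is that $P$ may have "flat" directions (coordinates $i$ with $y_I=0$ for all $I\ni i$, or more generally directions along which $P$ is trivial), and one must be careful that moving such a coordinate between $\pi_a$ and an adjacent block does not change the face; the claim is that Remark~\ref{rem:shift}'s hypothesis $P\subseteq\mathbb{R}_{>0}^n$ (hence $z_I>0$ for nonempty $I$, using $0<q<1$) forces every singleton to contribute a genuine edge in the $a$-th lifted factor, so no such ambiguity arises. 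I would make this precise by exhibiting, for any $i\in\pi_a$ and the block $\pi_{a+1}$ (or the singleton $\{n+1\}$ if $a$ is the last block before it), a simplex $\Delta_J$ in the Minkowski decomposition of $P(q)$ whose $\pi'$-maximal face is an edge but whose $\sigma'$-maximal face (for the coarsening $\sigma'$ merging those two blocks) is a point — mirroring the zonotope argument in the $P_n$ case above — thereby certifying that $\bigcup_{i\ge a}\pi_i$ and $\bigcup_{i>a}\pi_i$, and hence $\pi_a$, are detected by the face lattice.
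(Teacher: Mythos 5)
Your setup coincides with the paper's: both reduce the proposition to showing that $P(q)_{\pi'}=P(q)_{\mu'}$ exactly when $\pi'\sim'\mu'$, and both start from the same expansion of $P(q)_{\pi'}$ into the groups of summands indexed by $j(I)>a$ and $j(I)\le a$. Your ``if'' direction and your identification of the three invariants $[\pi]$, $\pi_a$, $\bigcup_{i>a}\pi_i$ are the right ones. The gap is precisely the step you flag yourself: proving that $\pi_a$ and $\bigcup_{i>a}\pi_i$ are invariants of the face rather than of $\pi'$. Your proposed mechanism --- exhibiting a simplex in the Minkowski decomposition whose $\pi'$-maximal face is an edge but whose $\sigma'$-maximal face is a point --- cannot work as stated: if $\sigma'$ coarsens $\pi'$ then $(\Delta_J)_{\pi'}\subseteq(\Delta_J)_{\sigma'}$ for every summand, so the edge can only appear in the \emph{coarser} face (that is the direction actually used in the $P_n$ argument you are mirroring). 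More seriously, even with the direction fixed the zonotope argument does not transfer: for general $P$ the decomposition $\sum_I y_I\Delta_I$ is a \emph{signed} Minkowski sum, so a single summand with positive coefficient contributing an edge does not certify that the total face has positive extent in that direction, since summands with negative $y_I$ can cancel it. Your instinct that the positive-orthant hypothesis is what saves the day is correct, but the sketch does not yet convert that hypothesis into a proof.

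The paper closes this gap by a device absent from your proposal: since the normal fan of $P(q)$ does not depend on $q\in(0,1)$ (Corollary \ref{cor:lifting}), the identity $P(q)_{\pi'}=P(q)_{\mu'}$ for one such $q$ holds for all of them, and one may therefore match the three groups of summands separately --- the first group is the $q$-independent part, and the third is the only one involving the coordinate $x_{n+1}$. The positive-orthant hypothesis then guarantees that $P(q)_{\pi'}$ has full support, so the supports of the matched pieces give $\bigcup_{i>a}\pi_i=\bigcup_{i>b}\mu_i$ and $\pi_a=\mu_b$ directly, while adding the first two pieces gives $P_\pi=P_\mu$. If you prefer to keep your more hands-on strategy, you would need to replace ``one positive summand has an edge'' by an honest computation, in terms of the $z$-parameters, of the extent of $P(q)_{\pi'}$ in the relevant directions (for instance the range of $x_{n+1}$, or of $x_{\pi_a}$, over the face), which is essentially re-deriving the support statement the paper gets for free from the summand matching.
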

\begin{proof}
Write $P = P_n(\{y_I\})$. %The assumption that $P$ intersects the interior of the positive orthant implies that for every $j\in[n]$ there is some $I\subseteq[n]$ that contains $j$ such that $y_I\neq 0$.
Assume that $P(q)_{\pi'} = P(q)_{\mu'}$ for two ordered partitions $\pi'$ and $\mu'$ of $[n+1]$. As bejore, let $j(I)$ (\emph{resp.} $k(I)$) be the largest $j$ (\emph{resp.} $k$) such that $I$ intersects $\pi'_j$ (\emph{resp.} $\mu'_k$).
 By Proposition \ref{prop:P(q)linear},
\begin{eqnarray*}
P(q)_{\pi'} &=& q\sum_{I\subseteq [n]}y_I({\Delta_I})_{\pi'} + (1-q)\sum_{I\subseteq [n]}y_I(\Delta_{I\cup\{n+1\}})_{\pi'} \\
&=& q\sum_{I\subseteq [n]}y_I({\Delta_I})_{\pi} +(1-q)\sum_{I :  j(I) > a }y_I({\Delta_I})_{\pi}+ (1-q)\sum_{I  :  j(I) \leq a } y_I \Delta_{(I \cap \pi_a) \cup \{n+1\}} \\
&=& 
\sum_{I  :  j(I) > a }y_I({\Delta_I})_{\pi}
+ q\sum_{I  :  j(I) \leq a } y_I ({\Delta_I})_{\pi}
+ q\sum_{I  :  j(I) \leq a } y_I \Delta_{(I \cap \pi_a) \cup \{n+1\}},
  \end{eqnarray*}
and similarly for $P(q)_{\mu'}$. If we have $P(q)_{\pi'} = P(q)_{\mu'}$ for \textbf{one} choice of $q$ with $0<q<1$, then $\pi'$ and $\mu'$ are in the same cone of the normal fan of $P(q)$, which does not depend on $q$ as argued in Corollary \ref{cor:lifting}.
It follows that $P(q)_{\pi'} = P(q)_{\mu'}$ for \textbf{any} $q$ with $0<q<1$. Since the first summand does not involve $q$ and only the last summand involves the $(n+1)$-st coordinate, we must have
\begin{eqnarray*}
\sum_{I  :  j(I) > a }y_I({\Delta_I})_{\pi} &=& \sum_{I  :  k(I) > b }y_I({\Delta_I})_{\mu},\\
\sum_{I  :  j(I) \leq a }y_I({\Delta_I})_{\pi} &=& \sum_{I  :  k(I) \leq b }y_I({\Delta_I})_{\mu},\\
\sum_{I  :  j(I) \leq a }y_I\Delta_{I \cap \pi_a} &=& \sum_{I  :  k(I) \leq b }y_I\Delta_{I \cap \mu_b}. 
\end{eqnarray*}
Adding the first two equations gives $P_\pi = P_\mu$, so $\pi \sim \mu$. Since $P(q)_{\pi'} = P(q)_{\mu'}$ has full support, the polytope described by the first equation has support $\bigcup_{i>a}\pi_i  = \bigcup_{i>b}\mu_i$, while the one described by the third equation has support $\pi_a = \mu_b$. It follows that $\pi' \sim' \mu'$. The converse follows similarly.
%\begin{align*}
%P(q)_{\pi'} &= qP_n(\{y_I\})_{\pi'} + (1-q)\tilde{Q}_n(\{y_I\})_{\pi'} \\
%&= q\sum_{I\subseteq [n]}y_I({\Delta_I})_{\pi} + (1-q)\sum_{I\subseteq [n]}y_I(\Delta_{I\cup\{n+1\}})_{\pi'}.
%%\end{align*}
%where we write $(\Delta_I)_{\pi}$ instead of $(\Delta_I)_{\pi'}$ because all of the $\Delta_I$ are on the hyperplane $x_{n+1}=0$. The decomposition for $P(q)_{\sigma'}$ is analogous.
%
%  Now if condition 1 of Definition \ref{def: face lattice}  is not satisfied, then in the above expression the sums $\sum_{I\subseteq [n]}y_I(\Delta_I)_{\pi}$ and $\sum_{I\subseteq [n]}y_I(\Delta_I)_{\sigma}$ will be unequal. If condition 2 is not satisfied, then similarly $(\Delta_{I\cup\{n+1\}})_{\pi'}$ and $(\Delta_{I\cup\{n+1\}})_{\sigma'}$ must differ for some $I$ with $y_I\neq 0$ by our positive orthant assumption. The reader can verify that both of these implications are reversible.
\end{proof}

\section{\textsf{Nestohedra and nestomultiplihedra.}}\label{section:nestohedra}

In his work on homotopy associativity for $A_\infty$ spaces, Stasheff \cite{S} defined the \emph{multiplihedron} ${\mathcal J}_n$, a cell complex which has since been realized in different geometric contexts by Fukaya, Oh, Ohta, and Ono \cite{FOOO}, by Mau and Woodward \cite{MW}, and others. It was first realized as a polytope by Forcey \cite{F}.

More generally, Devadoss and Forcey \cite{DF} defined, for each graph $G$, the \emph{graph multiplihedron} ${\mathcal J}G$. This is a polytope related to the graph associahedron ${\mathcal K}G$. \cite{ARW, CD} When $G$ has no edges, they gave a description of ${\mathcal J}G$ as a Minkowski sum. They asked for a Minkowski sum description of ${\mathcal J}G$ for arbitrary $G$.

In a different direction, Postnikov \cite{Po} defined the \emph{nestohedron} ${\mathcal K}\B$, an extension of  graph associahedra to the more general context of building sets $\B$. Devadoss and Forcey asked whether there is a notion of \emph{nestomultiplihedron} ${\mathcal J}{\B}$, which extends the graph multiplihedra to this context.

In this section we answer these questions affirmatively in a unified way, by showing that the $q$-lifting of the graph associahedron ${\mathcal K}G$ is the graph multiplihedron ${\mathcal J}G$ and, more generally, the $q$-lifting of the nestohedron ${\mathcal K}\B$ is the desired nestomultiplihedron ${\mathcal J}{\B}$.

\subsection{\textsf{Nestohedra and $\B$-forests.}}

\begin{deff} \cite{FS, Po}
A \emph{building set} $\B$ on a ground set $[n]$ is a collection of subsets of $[n]$ such that: \\
(B1) If $I, J \in \B$ and $I \cap J \neq \emptyset$ then $I \cup J \in \B$.\\
(B2) For every $e \in [n]$, $\{e\} \in \B$.
\end{deff}

An important example is the following: given a graph $G$ on a vertex set $[n]$, the associated building set $\B(G)$ consists of the subsets $I \subseteq [n]$ for which the induced subgraph $G|_I$ is connected. Such subsets are sometimes called the \emph{tubes} of $G$.

If $\B$ is a building set on $[n]$ and $A \subseteq [n]$, define the \emph{induced building set} of $\B$ on $A$ to be $\B|_A:= \{I \in \B \, : \, I \subseteq A\}$. Also let $\B_{\max}$ be the set of containment-maximal elements of $\B$.

\begin{deff} \cite{FS, Po}
A \emph{nested set} $\N$ for a building set $\B$ is a subset $\N \subseteq \B$ such that: \\
(N1) If $I, J \in \N$ then $I \subseteq J$ or $J \subseteq I$ or $I \cap J = \emptyset$. \\
(N2) If $J_1, \ldots, J_k \in \N$ are pairwise incomparable and $k \geq 2$ then $J_1 \cup \cdots \cup J_k \notin \B$. \\
(N3) $\B_{\max} \subseteq  \N$. \\
The \emph{nested set complex} $\N(\B)$ of $\B$ is the simplicial complex on $\B$ whose faces are the nested sets of $\B$.
\end{deff}

When $\B(G)$ is the building set of tubes of a graph, the nested sets are called the \emph{tubings} of $G$. If $G$ is the graph shown in Figure \ref{fig:graph}(a), an  example of a nested set or tubing is $\N = \{3,4,6,7,379,48,135679, 123456789\}$, shown in Figure \ref{fig:graph}(b).\footnote{We omit the brackets from the sets in $\N$ for clarity.}

\begin{figure}[h]
\centering
\includegraphics[scale=.2]{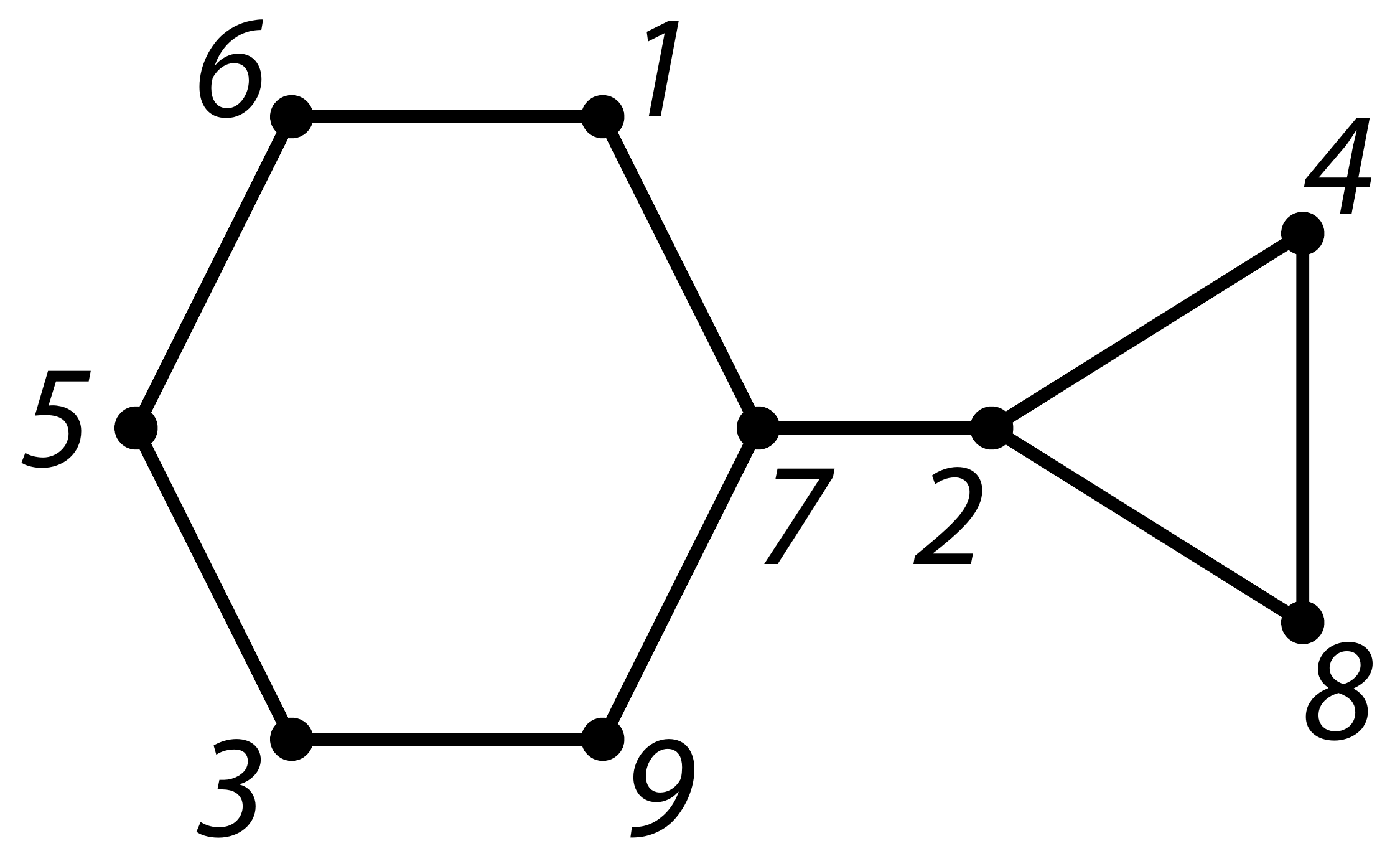}\qquad \qquad \includegraphics[scale=.2]{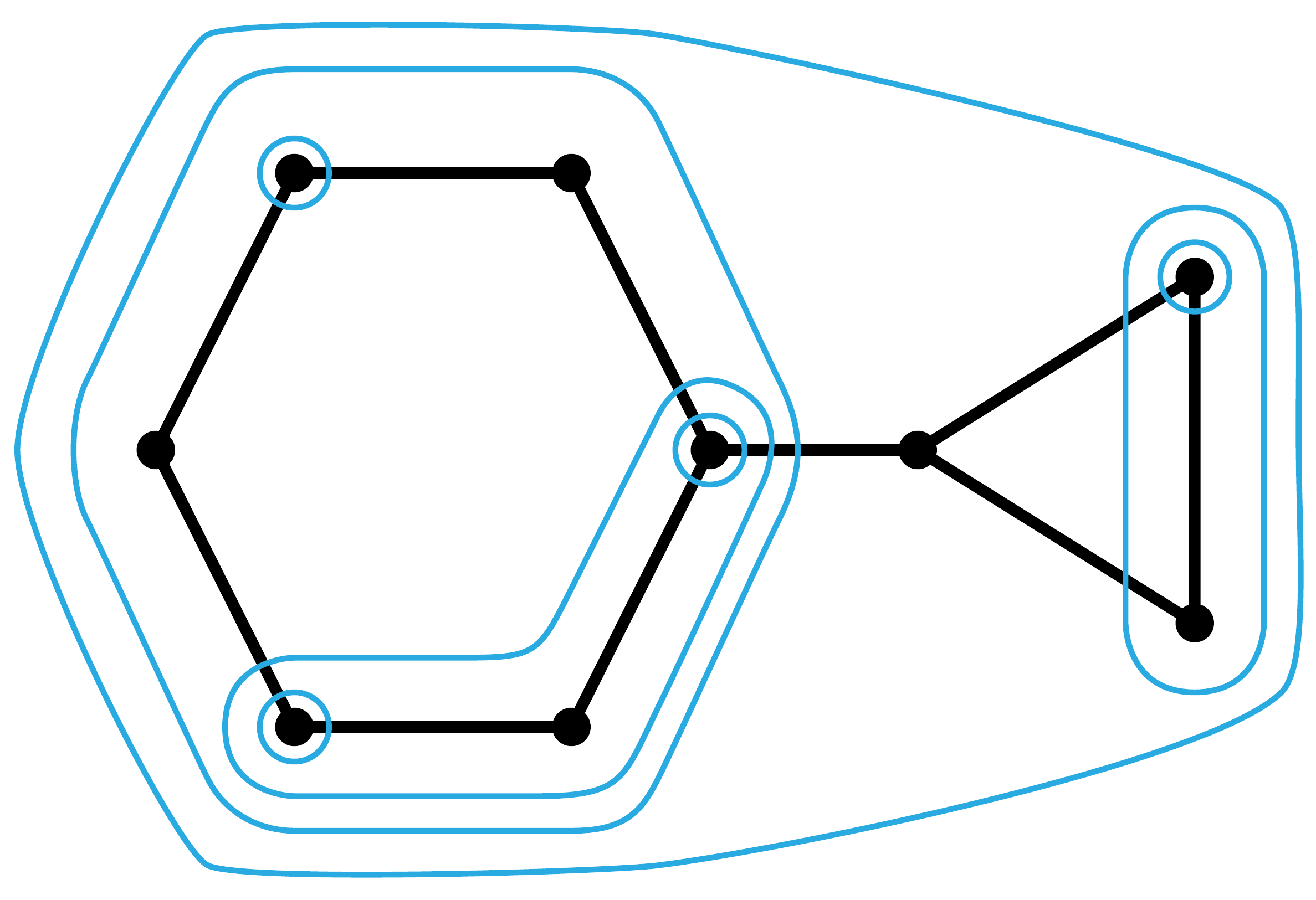} 
\caption{(a) A graph $G$. (b) A nested set or tubing of $G$.\label{fig:graph}}
\end{figure}

The sets in a nested set $\N$ form a poset by containment. This poset is a forest rooted at $\B_{\max}$ by (N1). Relabelling each node $N$ with the set $\Nhat := N \backslash \bigcup_{M \in \N \, : \, M < N} M$, we obtain a $\B$-forest:

\begin{figure}[h]
\centering
\includegraphics[scale=.3]{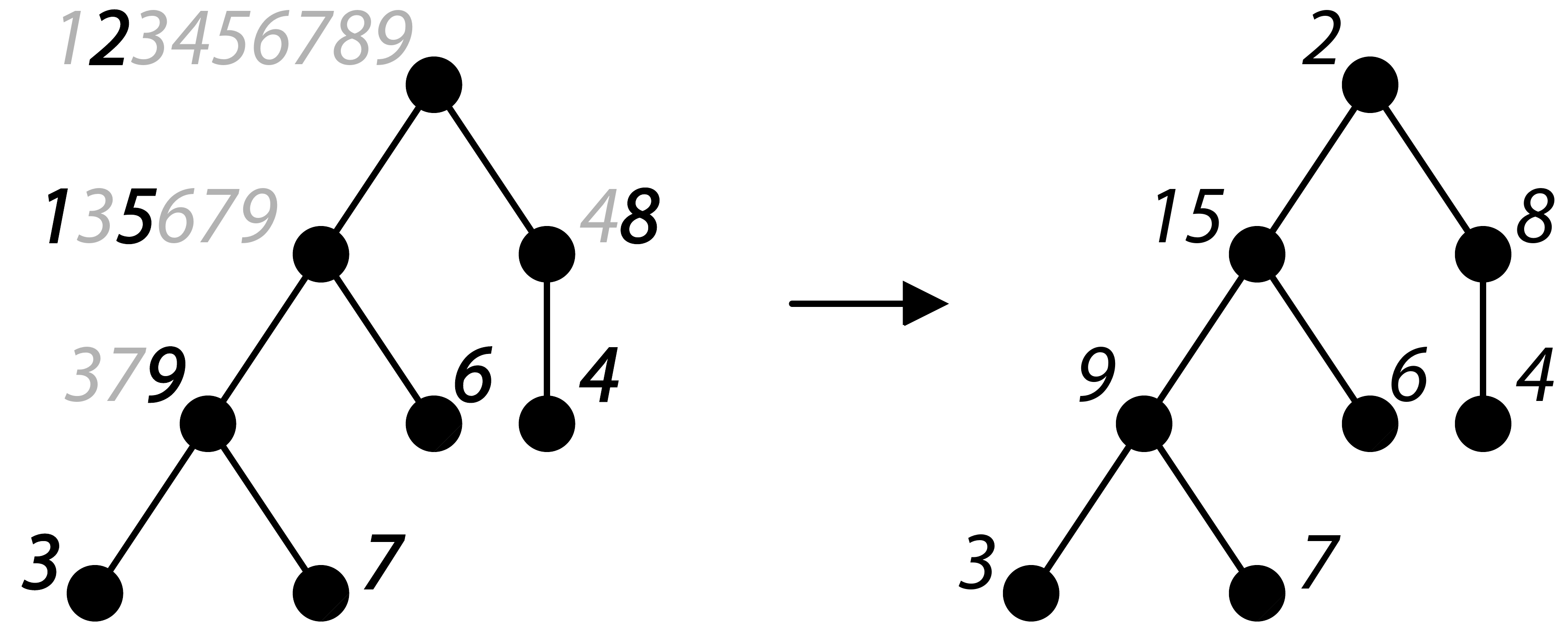}
\caption{The poset and the $\B$-forest for the nested set $\N = \{3,4,6,7,379,48,135679, 123456789\}$ of Figure \ref{fig:graph}(b).
\label{fig:B-forest}}
\end{figure}

\begin{deff} \cite{FS, Po}
Given a building set $\B$ on $[n]$, a \emph{$\B$-forest} $\N$ is a rooted forest whose vertices are labeled with non-empty sets %$S \subseteq [n]$ 
partitioning $[n]$ such that: \\
(F1) For any node $S$,  $\N_{\leq S} \in \B$. \\
(F2) If $S_1, \ldots, S_k$ are incomparable and $k \geq 2$, $\bigcup_{i=1}^k \N_{\leq S_i} \notin \B$. \\
(F3) If $R_1, \ldots, R_r$ are the roots of $F$, then the sets $\N_{\leq R_1}, \ldots, \N_{\leq R_r}$ are precisely the maximal elements of $\B$. 
\end{deff}

Here $\N_{\leq S} := \bigcup_{T \leq S} T$. It is clear from the definitions that nested sets for $\B$ are in bijection with $\B$-forests. As the notation suggests, we will make no distinction between a nested set and its corresponding $\B$-forest.

Given a $\B$-forest $\N$, the \emph{contraction} of an edge $ST$ (where $T$ is directly above $S$ in the forest) is obtained by removing the edge $ST$, and relabeling the resulting merged vertex with the set $S \cup T$. Containment of nested sets corresponds to successive contraction of $\B$-forests. Say $\N \geq \N'$ if the nested set $\N'$ is contained in the nested set $\N$ or, equivalently, if the $\B$-forest $\N'$ is obtained from the $\B$-forest $\N$ by a series of successive contractions. 
Then we have:

\begin{theorem}\label{th:nest}\cite{FS,Po}
The face poset of the \emph{nestohedron} 
\[
{\mathcal K}\B:= \sum_{B \in \B} \Delta_B
\]
is isomorphic to the opposite of the poset of $\B$-forests.
\end{theorem}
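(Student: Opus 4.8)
The plan is to establish a bijection between the faces of ${\mathcal K}\B = \sum_{B \in \B} \Delta_B$ and the $\B$-forests, and to check that this bijection reverses the order (face containment on one side, successive contraction on the other). The key tool is Proposition \ref{prop:props}(2): the $\pi$-maximal face of $\sum_{B \in \B} \Delta_B$ is $\sum_{B \in \B} \Delta_{B \cap \pi_{j(B)}}$, where $j(B) = \max\{j : B \cap \pi_j \neq \emptyset\}$. So every face of ${\mathcal K}\B$ is a Minkowski sum of faces of the $\Delta_B$, indexed by an ordered partition $\pi = \pi_1|\cdots|\pi_m$ of $[n]$, and two ordered partitions give the same face exactly when they produce the same multiset of simplices $\{\Delta_{B \cap \pi_{j(B)}}\}_{B \in \B}$.

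First I would describe, given an ordered partition $\pi$, how to read off a $\B$-forest. For each block $\pi_t$, look at the sets $B \cap (\pi_t \cup \pi_{t+1} \cup \cdots \cup \pi_m)$ for $B \in \B$ with $j(B) = t$; equivalently, following Proposition \ref{prop:props}(3), the face of ${\mathcal K}\B$ decomposes as a product over the blocks, and within each block one gets an induced nestohedron on an induced building set. Iterating this, one assigns to $\pi$ the forest whose nodes are certain blocks and whose root-to-node unions $\N_{\leq S}$ lie in $\B$ — the conditions (F1), (F2), (F3) translating precisely the three requirements that the resulting collection of simplices be a genuine face (nonredundancy of the Minkowski summands, that no larger-dimensional simplex survives from a union, and that the full polytope ${\mathcal K}\B$ corresponds to $\B_{\max}$ at the roots). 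Conversely, given a $\B$-forest $\N$, any linear extension of its poset refines to an ordered partition $\pi$ realizing the corresponding face, and the conditions (F1)--(F3) guarantee this face is well defined and independent of the chosen linear extension.

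Next I would verify the order-reversal. If $\pi'$ coarsens $\pi$, then merging adjacent blocks of $\pi$ can only decrease each $j(B)$ or leave it unchanged, hence shrinks each summand $\Delta_{B \cap \pi_{j(B)}}$; so coarsening ordered partitions corresponds to passing to a face of smaller dimension — i.e., going \emph{up} in the opposite of the $\B$-forest poset means going \emph{down} in the face poset. Concretely, a single contraction of an edge $ST$ in a $\B$-forest corresponds to merging the blocks for $S$ and $T$, which is a minimal coarsening, and this is exactly a covering relation on both sides; hence the poset of $\B$-forests ordered by contraction is the face poset of ${\mathcal K}\B$, which is what we want up to taking opposites. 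The empty nested set conventions and the role of $\B_{\max}$ via (F3) handle the top face (all of ${\mathcal K}\B$) and ensure the whole forest, not just a sub-forest, is recovered.

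The main obstacle I expect is the bijection itself — specifically, showing that two ordered partitions $\pi, \mu$ yield the same face of ${\mathcal K}\B$ if and only if they produce the same $\B$-forest. One direction (same forest $\Rightarrow$ same face) is essentially the statement that the face only depends on the data $(\N_{\leq S})_{S}$ and not on the linear extension, which follows by carefully tracking which $\Delta_{B \cap \pi_{j(B)}}$ are nonempty and coincide. The converse — that the face determines the forest — requires arguing that from the multiset of Minkowski summands $\{\Delta_{B \cap \pi_{j(B)}} : B \in \B\}$ one can reconstruct the partition of $[n]$ into the forest's node-labels and the forest's edges; here (N2)/(F2) is what prevents collapse and makes the reconstruction unambiguous. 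This is the combinatorial heart of the argument; the order-theoretic part is then routine given the description of covers in $(\mathcal{P}^n, \prec)$.
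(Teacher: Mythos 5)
Your overall strategy matches the paper's. The paper does not prove this statement directly --- it is quoted from Feichtner--Sturmfels and Postnikov and then re-derived in Remark \ref{rem:nest} from the painted generalization --- but the proof of Theorem \ref{th:nesto} is precisely your plan in the painted setting: compute $({\mathcal K}\B)_\pi = \sum_{B}\Delta_{B\cap\pi_{j(B)}}$ via Proposition \ref{prop:props}, extract from $\pi$ the nested set $\N_\pi(\B)=\{N\in\B : j(N)<j(M) \textrm{ for all } M\in\B \textrm{ with } N\subsetneq M\}$ (equivalently your recursive branching through induced building sets), show that the face depends only on this nested set, show that every $\B$-forest arises by labelling its nodes strictly increasingly up the forest, and match covering relations. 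The step you call the ``combinatorial heart'' is indeed where the work lies; the paper's device for it is the identity $N_\pi = N\setminus\bigcup_{M\in\N,\,M\subsetneq N}M$ for $N\in\N$, which gives $B_\pi = B\cap N_\pi$ for the minimal $N\in\N$ containing $B$ and hence shows the face is determined by $\N$. You would need to supply that argument, but identifying it as the crux is fair for a proposal.

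However, your order-comparison paragraph has the directions backwards. Merging two adjacent blocks $\pi_i,\pi_{i+1}$ does not shrink the summands: for $B$ with $j(B)=i+1$, the top block met by $B$ after merging is $\pi_i\cup\pi_{i+1}$, so $\Delta_{B\cap\pi_{j(B)}}$ grows (weakly). Coarsening the ordered partition therefore yields a \emph{larger} face, not a smaller one --- the one-block partition gives all of ${\mathcal K}\B$, and the partitions into singletons give the vertices. On the forest side, merging the blocks containing $S$ and $T$ contracts the edge $ST$, which moves \emph{down} in the $\B$-forest poset as the paper orders it. So contraction (down in forests) corresponds to enlargement (up in faces); that is exactly why the isomorphism is with the \emph{opposite} poset. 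As written, your argument concludes that coarsening decreases dimension and that going up in the opposite forest poset goes down in the face poset, which is internally inconsistent with the statement being proved. This is a fixable sign error rather than a structural flaw --- your final identification of edge-contractions with minimal coarsenings as covering relations is the right correspondence --- but the monotonicity justification must be redone in the correct direction.
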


The nested set complex $\N(\B)$ is a cone over $\B_{max}$, and Theorem \ref{th:nest} says that the link of $\B_{max}$, called the \emph{reduced nested set complex}, is combinatorially dual to the nestohedron. 

In Theorem \ref{th:nesto} we will prove a ``painted" version of this result, following a similar proof strategy. In fact one can deduce Theorem \ref{th:nest}  directly from Theorem \ref{th:nesto}, as we will explain in Remark \ref{rem:nest}.

It is worth remarking that the \emph{graph associahedron} ${\mathcal K}G$ is the nestohedron for the building set $\B(G)$ of the graph $G$. For instance, if $G=P_n$ is the path with $n$ vertices, then $\B(P_n) = \{[i,j] \, : \, 1 \leq i \leq j \leq n\}$ is the nested set of intervals, and the resulting nestohedron is the \emph{associahedron} ${\mathcal K}_n$. Figure \ref{fig:assoc} illustrates this in the case $n=3$. There is a simple bijection between $\B(P_3)$-forests and planar trees on $n+1$ unlabeled leaves. A ``painted" version of this bijection is illustrated in Figure \ref{fig:treeflip}. 

\begin{figure}[h]
\centering
\includegraphics[scale=.4]{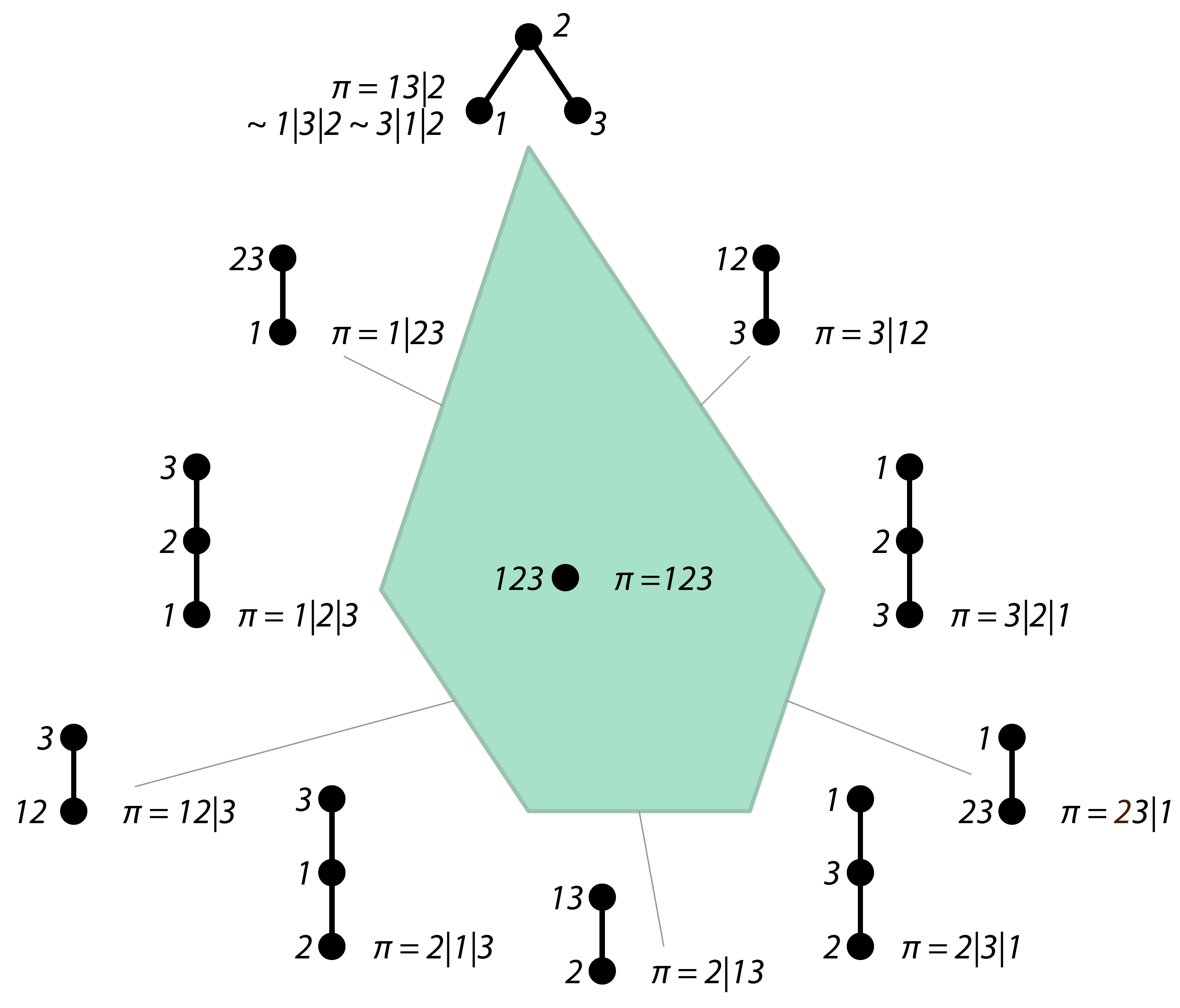}
\caption{The associahedron ${\mathcal K}_3$, whose faces are labeled by $\B(P_3)$-forests. Next to each face, we have also indicated the partitions of $[3]$ which maximize it.  
\label{fig:assoc}}
\end{figure}

\subsection{\textsf{Nestomultiplihedra and painted $\B$-forests.}}

\begin{deff}
%A \emph{painted $\B$-forest} is obtained from a $\B$-forest by applying a connected region of paint to each tree in the forest, starting at the root and branching downward, subject to the constraint that each node must be either (1) totally unpainted, (2) totally painted, or (3), painted above the node and unpainted below the node.
%Alternatively, a
A \emph{painted $\B$-forest} $\NN= (\N^-, \N^0, \N^+)$ is a $\B$-forest $\N$ together with a partition of the vertices into a downset $\N^-$, an antichain $\N^0$, and an upset $\N^+$ such that $\N^- \cup \N^0$ is a downset (and hence $\N^0 \cup \N^+$ is an upset).  The vertices of $\N^-, \N^0,$ and $\N^+$ are colored white, grey, and black, respectively.
\end{deff}

This can also be regarded as a definition of \emph{painted nested sets} for $\B$, since we are making no distinction between the nested sets for $\B$ and the $\B$-forests.

As a visual aid, we shade all half-edges above and below the black vertices, and above the grey vertices. The result is a connected ``coat of paint" starting at the root of each tree in the forest. 
%Here $\N^-, \N^+, \N^0$ represent the sets of vertices of type (1), (2), and (3), respectively. 
Figure \ref{fig:paintedB-forest} shows a painted $\B$-forest for the building set of the graph in Figure \ref{fig:graph}(a). Here $\N^-=\{3,4,6,7\}, \, \N^0 = \{8,9\},$ and $\N^+ = \{15, 2\}$.

\begin{figure}[h]
\centering
\includegraphics[scale=.25]{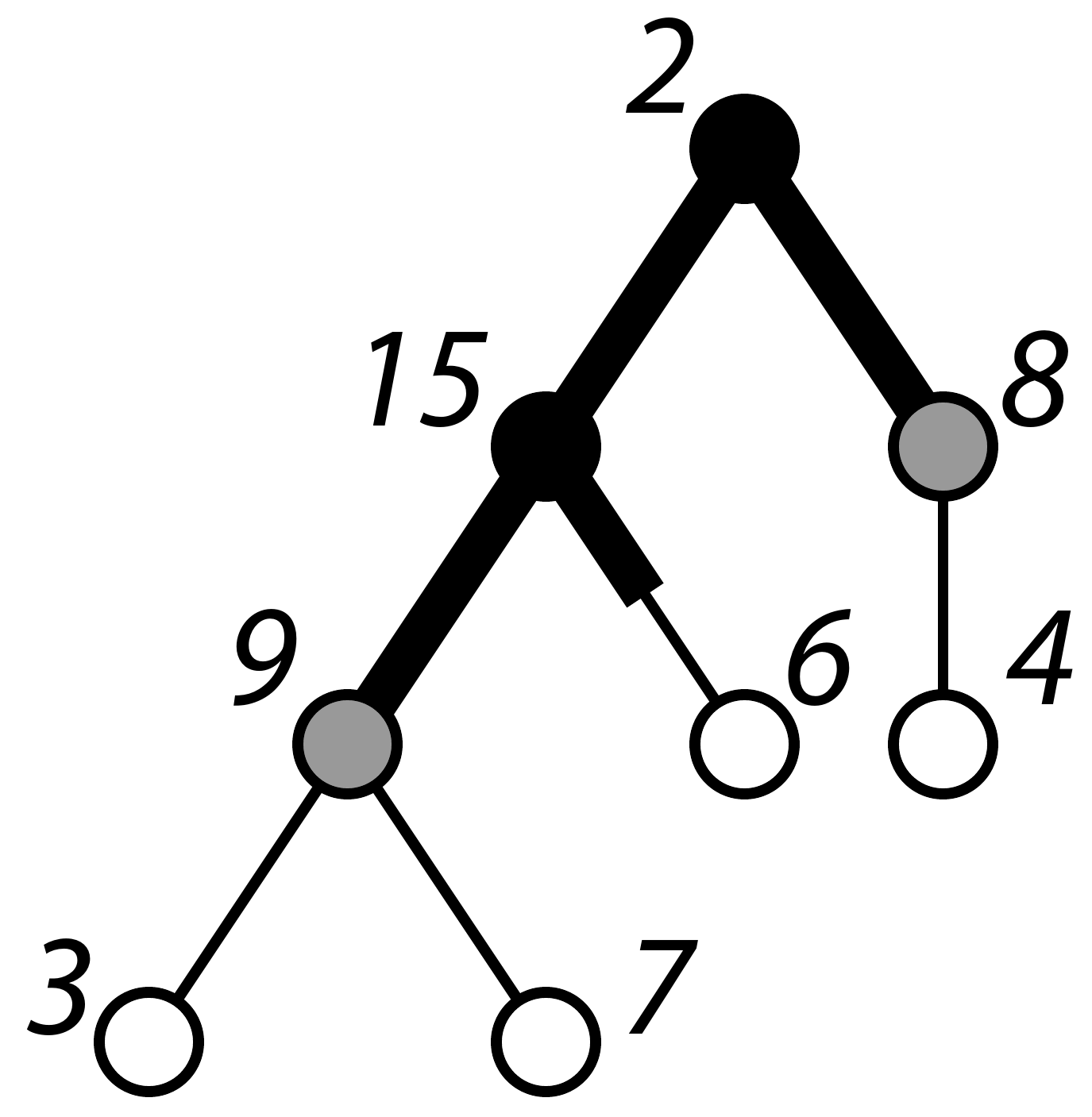}
\caption{A painted $\B$-forest. The vertices in $\N^-, \N^0,$ and $\N^+$ are shaded white, black, and grey, respectively.
\label{fig:paintedB-forest}}
\end{figure} 

This notion is compatible with the notion of painted trees in \cite{F}. 
When $\B(P_n) = \{[i,j] \, : \, 1 \leq i \leq j \leq n\}$ is the nested set of the path graph $P_n$,  the painted $\B(P_n)$-forests are in bijection with the \emph{painted trees} of \cite{F}. The bijection, which is illustrated in Figure \ref{fig:treeflip}, is as follows. Recall that a painted tree is planar and unlabeled. There are $n-1$ nooks between the pairs of adjacent siblings. Travel clockwise around the tree, starting at the root, and number the nooks $1, \ldots, n-1$ in the order that they are visited. Label each internal vertex with the set of numbers of its nooks. Also color each vertex white, grey, or black, according to whether its surroundings are completely uncolored, completely colored, or half colored. Finally remove the root and all the leaves, and turn the tree upside down. The result is a painted $\B(P_n)$-tree, and one easily checks that this procedure is reversible.

\begin{figure}[h]
\centering
\includegraphics[scale=.4]{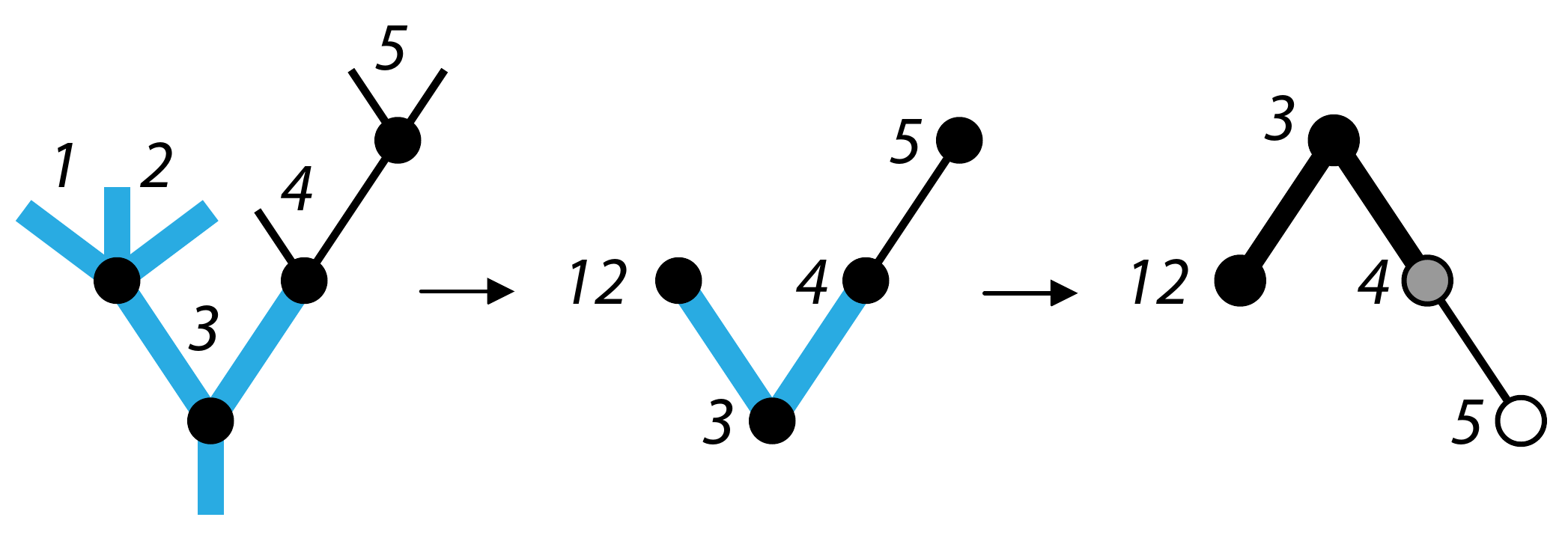}
\caption{A painted tree and the corresponding $\B(P_n)$-forest.
\label{fig:treeflip}}
\end{figure} 

Similarly, if $\B(G)$ is the building set of a graph $G$, then there is a natural bijection between the painted $\B(G)$-forests and the \emph{marked tubings} of \cite{DF}.

Given a painted $\B$-forest $\N$, the \emph{contraction} of an edge $ST$ is obtained by removing the edge $ST$ and relabeling the resulting merged vertex with the set $S \cup T$. If the vertices $S$ and $T$ had the same color, then the new vertex $S \cup T$ is given the same color. If they had different colors, then $S \cup T$ is colored grey.

When we contract an edge whose vertices are either both black (BB), both white (WW), or grey and white (GW), we obtain a painted $\B$-forest. When  we contract a BG edge $ST$, where $S$ is black and $T$ is grey, the result may not be a painted $\B$-forest. To obtain one, we need to contract {\bf all} BG edges $ST'$ where $T'$ is a grey descendent of $S$. We call this set of BG edges a \emph{BG} bunch. To contract a BW edge $ST$, we first need to contract the BG bunch hanging from $S$, if there is one; after that, we will be able to contract $ST$.

\begin{deff} Define a partial order on painted $\B$-forests by saying that $\N \geq \N'$ if the $\B$-forest $\N'$ is obtained from the $\B$-forest $\N$ by successively: 

%\noindent 
\qquad $\bullet$ contracting a BB, WW, or GW edge, 

\qquad $\bullet$ contracting a BG bunch,

\qquad $\bullet$ converting a black vertex with only white successors into a grey vertex.

%\noindent 
\qquad $\bullet$ converting a white vertex with a black predecessor into a grey vertex,

\end{deff}

\begin{figure}[h]
\centering
\includegraphics[scale=.4]{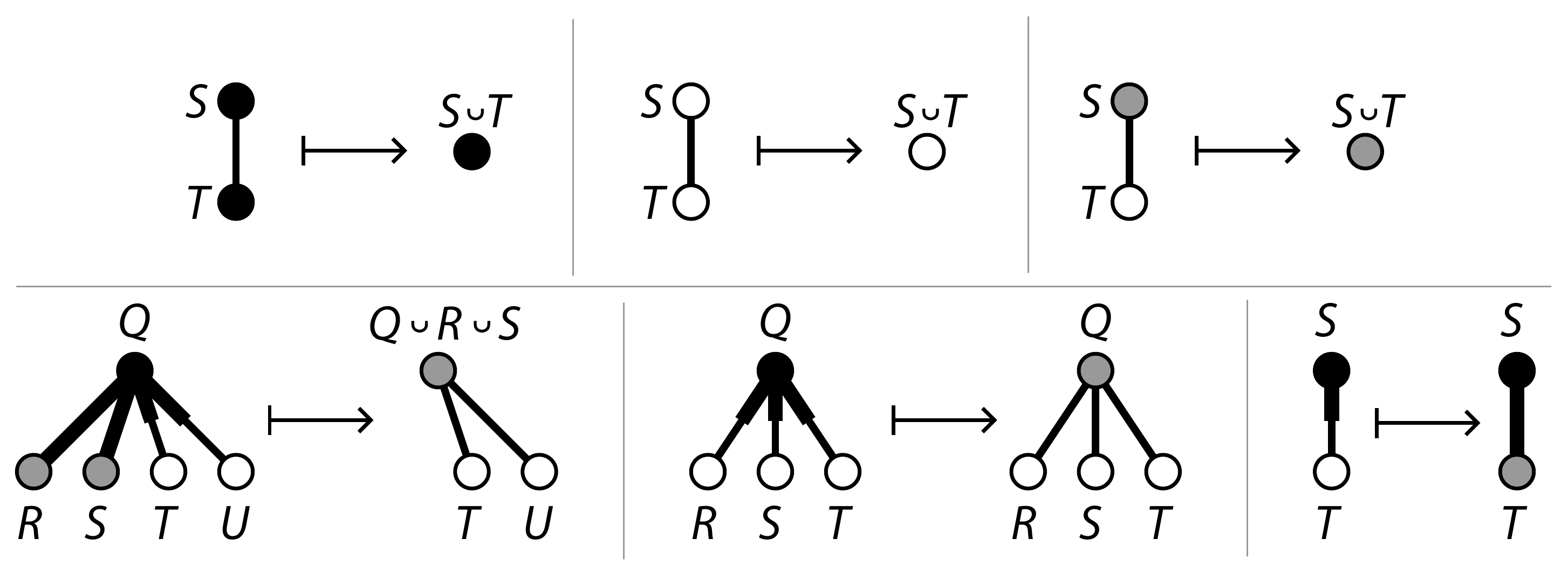}
\caption{The different ways of going down the poset of painted $\B$-forests. \label{fig:uptheposet}}
\end{figure} 

Figure \ref{fig:uptheposet} illustrates the six different operations that bring us down in the poset of painted $\B$-forests. Notice that ``contracting a BW edge (after contracting the corresponding BG bunch if there is one)" also brings us down in this poset, but such a contraction is a combination of the operations on the list. Therefore we do not include it.

%\begin{deff}
%The set of painted $\B$-forests also has a poset structure as follows. Let $\N$ and $\N'$ be painted forests.
%
%\noindent $\bullet$ If $\N'$ and $\N$ are incomparable as unpainted forests, then they are incomparable as painted forests.
%
%\noindent $\bullet$ If $\N$ and $\N'$ are equal as unpainted forests, then $\N' < \N$ (and we say $\N'$ \emph{coarsens} $\N$) if $\N$ can be obtained from $\N'$ by converting some nodes of type (3) into nodes of types (1) or (2). 
% 
%\noindent $\bullet$ If $\N' < \N$ as unpainted forests, let $\N'_{\N}$ be the painted forest which equals $\N'$ as unpainted forests, but inherits its coloring from $\N$ as follows: If vertex $S \in \N'$ is obtained by identifying vertices $S_1, \ldots, S_k$ in $\N$, then \\
%- if all the $S_i$s have the same type in $\N$, then vertex $S$ has that same type in $\N'_{\N}$. \\
%- if not all the $S_i$s have the same type in $\N$, then $S$ has type (3) in $\N'_{\N}$.\\
% %as the $S_i$s if they all have the same type in $\N$, and it has type (3) otherwise. 
% Then $\N' < \N$ as painted forests if $\N'$ coarsens $\N'_{\N}$ as in the previous paragraph.
%\end{deff}

Figure \ref{fig:J3} shows the multiplihedron ${\mathcal J}_3$ (which is also the graph multiplihedron ${\mathcal J}K_3$, as well as the nestomultiplihedron ${\mathcal J}\B(K_3)$ for the building set of $K_3$), whose faces are in order-preserving bijective correspondence with the painted trees on $[3]$. Our next theorem constructs the \emph{nestomultiplihedron}, which plays the analogous role for an arbitrary building set $\B$.

\begin{figure}[h]
\centering
\includegraphics[scale=.4]{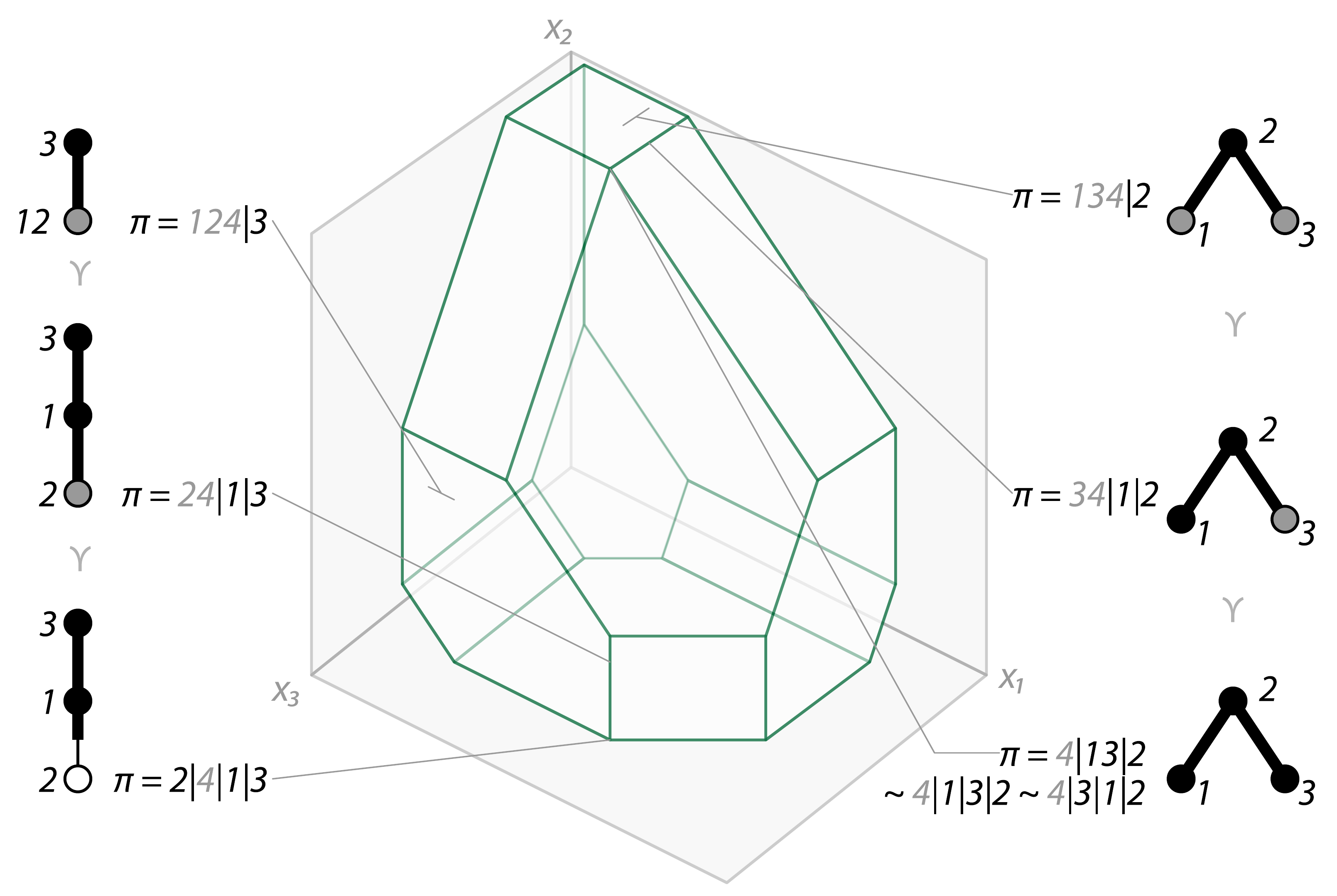}
\caption{The multiplihedron ${\mathcal J}_3 = {\mathcal J}K_3$ (projected onto the hyperplane $x_4=0$), whose faces are labeled by painted $\B(K_3)$-forests. Next to some of the faces, we indicate the corresponding $\B(K_3)$-forest, as well as the partitions of $[4]$ which maximize them. \label{fig:J3}}
\end{figure}

\begin{theorem} \label{th:nesto}
The face poset of the \emph{nestomultiplihedron} 
\[
{\mathcal J}{\B}:= \sum_{B \in \B} \Delta_B + \sum_{B \in \B} \Delta_{B \cup \{n+1\}}
\]
is isomorphic to the opposite of the poset of painted $\B$-forests.
\end{theorem}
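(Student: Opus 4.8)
The plan is to mimic the proof strategy of Theorem \ref{th:nest} (the nestohedron case), but now working with the $q$-lifting. By Proposition \ref{prop:P(q)linear}, the nestomultiplihedron $\mathcal{J}\B$ is exactly the $q$-lifting $(\mathcal{K}\B)(q)$ of the nestohedron (for any $0<q<1$, since by Corollary \ref{cor:lifting} all such liftings are combinatorially isomorphic). So we already have, via Proposition \ref{prop: face lattice}, an abstract description of $\mathcal{L}(\mathcal{J}\B)$ as $(\mathcal{P}^{n+1},\prec)/\sim'$, where $\sim'$ is built from the equivalence relation $\sim$ on $\mathcal{P}^n$ induced by $\mathcal{K}\B$. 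The real content is to translate this into the combinatorics of painted $\B$-forests. First I would recall, from Theorem \ref{th:nest}, the explicit bijection between faces of $\mathcal{K}\B$ and $\B$-forests: an ordered partition $\pi=\pi_1|\cdots|\pi_k$ of $[n]$ maps to the $\B$-forest whose nodes and nesting structure are read off by applying Proposition \ref{prop:props}(2) repeatedly, and $\pi\sim\mu$ precisely when they produce the same $\B$-forest.

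Next I would set up the map from ordered partitions $\pi'$ of $[n+1]$ to painted $\B$-forests. Given $\pi'$, let $\pi$ be the ordered partition of $[n]$ obtained by deleting $n+1$, and let $a$ be the index of the block of $\pi$ (or $\pi'$) containing $n+1$. Then $\pi$ gives an underlying $\B$-forest $\N$ via Theorem \ref{th:nest}. The painting is determined by the position of $n+1$: a node $\Nhat$ (labeled by a set $S$ in the $\B$-forest) should be colored \emph{black} if the corresponding subset of $[n]$ "lies entirely below block $a$" in the sense $j(I)>a$ for the generating simplices, \emph{white} if it lies entirely at or above block $a$ after the $n+1$ coordinate has been split off, and \emph{grey} in the boundary case. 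Concretely, this is exactly the trichotomy appearing in the three-line Minkowski-sum computation inside the proof of Proposition \ref{prop: face lattice}: the first sum $\sum_{j(I)>a}$ is the "black part," the second the "white part," and the grey vertices are those simplices straddling block $\pi_a$. I would verify that the conditions on a downset $\N^-$, antichain $\N^0$, upset $\N^+$ with $\N^-\cup\N^0$ a downset translate exactly into the constraints that $\{i : i$ appears in a block before the one containing $n+1\}$ is a union of subtrees, etc. — i.e., that the image of this map is precisely the set of painted $\B$-forests, and that $\pi'\sim'\mu'$ (Definition \ref{def: face lattice}) if and only if $\pi'$ and $\mu'$ yield the same painted $\B$-forest. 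The three equations displayed in the proof of Proposition \ref{prop: face lattice} (equality of black parts, of white parts, and of the grey "$I\cap\pi_a$" parts) are exactly the statement that the underlying forests agree, the downsets $\N^-\cup\N^0$ agree, and the grey antichains agree.

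Finally, I would check that the order is respected: a covering relation $[\mu']$ covers $[\pi']$ in $(\mathcal{P}^{n+1},\prec)/\sim'$ if and only if the painted $\B$-forest of $\pi'$ is obtained from that of $\mu'$ by one of the six elementary moves in the definition of the partial order on painted $\B$-forests (contracting a BB/WW/GW edge, contracting a BG bunch, blackening→grey, or whitening→grey). This is the step I expect to be the main obstacle: matching a \emph{minimal} coarsening of an ordered partition of $[n+1]$ — which merges two adjacent blocks $\pi'_i,\pi'_{i+1}$ — against the list of forest moves requires a careful case analysis on where $i$ and $i+1$ sit relative to the block $a$ containing $n+1$. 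Merging two blocks both strictly after $a$ is a contraction on the black side; both at/before $a$ is a contraction on the white side (which, when a grey straddling simplex gets absorbed, may force the "BG bunch" grouping — explaining why that move is bundled); merging block $a$ itself with a neighbor corresponds to the color-change moves. One must also confirm that these "minimal coarsenings modulo $\sim'$" really are the covers of the quotient poset, using that $\mathcal{K}\B$ is a generalized permutahedron so $\mathcal{L}(\mathcal{K}\B)$ is already a coarsening of $\mathcal{L}(P_n)$, and that taking $\sim'$-classes commutes appropriately with coarsening. I would use the associahedron/multiplihedron pictures (Figures \ref{fig:assoc}, \ref{fig:J3}) as a sanity check throughout, and I expect the argument for $\mathcal{K}\B$ (Theorem \ref{th:nest}) to fall out as the special case where $n+1$ is placed in the last block and everything is white (Remark \ref{rem:nest}).
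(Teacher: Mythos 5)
Your route is sound but organized differently from the paper's. The paper does \emph{not} pass through Proposition \ref{prop: face lattice} or Theorem \ref{th:nest}: it works directly with the Minkowski sum $\sum_B \Delta_B + \sum_B \Delta_{B\cup\{n+1\}}$, explicitly constructs the nested set $\N_{\pi'}(\B) = \{N\in\B : j(N)<j(M) \text{ for all } M\supsetneq N\}$ and its painting from the position of $n+1$, proves surjectivity by exhibiting, for each painted $\B$-forest, an increasing labeling of its nodes that produces a suitable $\pi'$ (the verification that $\N=\N_{\pi'}(\B)$ uses (B1) and (N2)), and checks order-reversal by the case analysis you anticipate; Theorem \ref{th:nest} is then \emph{deduced} from the painted version (Remark \ref{rem:nest}), not used as input. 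Your bootstrapping from Proposition \ref{prop: face lattice} is legitimate and buys you the ``when do two partitions give the same face'' direction essentially for free, which is a genuine economy.

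However, two substantive pieces are asserted rather than supplied. First, Theorem \ref{th:nest} as stated is only an abstract poset isomorphism; it does not give you the explicit map from an ordered partition $\pi$ to a $\B$-forest, and ``applying Proposition \ref{prop:props}(2) repeatedly'' does not obviously yield one. You would still have to construct $\N_\pi(\B)$ and prove it is a nested set, which is a real part of the work. Second, and more importantly, surjectivity --- that \emph{every} painted $\B$-forest is realized by some $\pi'$ --- is not a routine check: one must choose a strictly increasing labeling of the forest compatible with the painting and then prove that the resulting partition recovers exactly the given nested set, which requires a two-sided containment argument invoking (B1) and (N2). Your proposal never engages with this. (Minor point: your color conventions are garbled --- ``white if it lies entirely at or above block $a$'' overlaps with your description of grey; the correct trichotomy is white for $j(N)<k$, grey for $j(N)=k$, black for $j(N)>k$ where $k$ locates $n+1$.) With those gaps filled, the argument goes through.
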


\begin{proof}
Let $\pi'$ be an ordered partition of $[n+1]$ and $\pi$ the partition of $[n]$ obtained by removing $n+1$ from $\pi'$. Consider the face $({\mathcal J}{\B})_{\pi'}$ maximized in direction $\pi'$:
\begin{equation}\label{eq:face}
({\mathcal J}{\B})_{\pi'} = \sum_{B \in \B} (\Delta_B)_{\pi} + \sum_{B \in \B} (\Delta_{B \cup \{n+1\}})_{\pi'}.
\end{equation}
For each $B \in \B$ let $j(B)$ be the largest $j$ for which $B$ intersects $\pi_j$. Notice that $j$ is a weakly increasing function, in the sense that $N \subset M$ implies $j(N) \leq j(M)$. Write $B_{\pi} := B \cap \pi_{j(B)}$, so $(\Delta_B)_{\pi} = \Delta_{B_\pi}$. Let
\[
\N = \N_\pi(\B) := \{N \in \B \, : \, j(N) < j(M) \textrm{ for all } M \in \B \textrm{ with } N \subsetneq M\}.
\]
Alternatively, construct $\N$ recursively by the following branching procedure: The maximal elements $N_1, \ldots, N_k$ of $\B_{\max}$ are in $\N$, and every other $B \in \B$ is a subset of one such $N_i \in \B_{\max}$. If $B \cap (N_i)_\pi \neq \emptyset$ then $B$ is not in $\N$. The remaining $B \subset N_i \backslash (N_i)_\pi$ are the elements of the induced building set $\B|_{ N_i \backslash (N_i)_\pi}$. Construct the corresponding nested set $\N_i$ in each $\B|_{ N_i \backslash (N_i)_\pi}$, and let $\N = \B_{\max} \cup \N_1 \cup \cdots \cup \N_k$. The result is a nested set.

For the building set $\B$ of the graph in Figure \ref{fig:graph}(a), and the ordered partition $\pi = 347|6|89|15|2$, we obtain the nested set $\N = \{3,4,6,7,379,48,135679, 123456789\}$ of Figure \ref{fig:graph}(b). Note that Figure \ref{fig:B-forest} encodes the branching procedure described in the previous paragraph.

If $n+1$ was added between blocks $\pi_i$ and $\pi_{i+1}$ of $\pi$ to be in its own block in $\pi'$, let $k=i+\frac12$.  Otherwise, if $n+1$ was added to block $\pi_i$, let $k=i$. Let 
\begin{eqnarray*}
\N^+ &=& \{N \in \N \, : \, j(N) > k\}, \\
\N^0 &=& \{N \in \N \, : \, j(N) = k\}, \\
\N^- &=& \{N \in \N \, : \, j(N) < k\}
\end{eqnarray*}
By the definition of $\N$, the set $\N^0$ is an antichain. Also, since $j(\cdot)$ is weakly increasing, $\N^-$ and $\N^- \cup \N^0$ are order ideals and $\N^+$ is an order filter. Therefore $\NN_{\pi'}(\B) := \NN=(\N^+, \N^0, \N^-)$ is a painted $\B$-forest.

In the previous example, if $\pi' = 347|6|89{\bf 10}|15|2$ we obtain the painted $\B$-forest $\NN$ of Figure \ref{fig:paintedB-forest}.

We plan to label the face $({\mathcal J}{\B})_{\pi'}$ with the painted $\B$-forest $\NN$. In order to do that, we need to show that $\NN$ actually determines $({\mathcal J}{\B})_{\pi'}$. By (\ref{eq:face}) it suffices to show that $\NN$ determines $B_\pi$ and $(B \cup \{n+1\})_{\pi'}$ for all $B \in \B$. 
One easily checks (see \cite{Po}) that if $N \in \N$ then $N_\pi =% \Nhat := 
N - \bigcup_{M \in \N \, : \, M \subsetneq N} M$, which depends only on $\N$. Now for an arbitrary $B \in \B$ let $N$ be the minimal set in $\N$ containing $B$. From the expression for $N_\pi$ above we see that $N_\pi \cap B$ is non-empty, and therefore $B_\pi = N_\pi \cap B = N \cap \pi_{j(N)} \cap B$, which only depends on $\N$. Finally observe that  $(B \cup \{n+1\})_{\pi'}$ equals $B_\pi$ if $N \in \N^-$, or $B_\pi \cup \{n+1\}$ if $N \in \N^0$, or $\{n+1\}$ if $N \in \N^+$, and so it is determined by $\NN$.

\medskip

Having shown that every face is labeled by a painted $\B$-forest, we need to show that every painted forest $\NN=(\N^+, \N^0, \N^-)$ labels a face. Label the nodes of $\NN$ using all the numbers $1, 2, \ldots, m$, possibly with repetitions, strictly increasingly up the forest, in such a way that the nodes in $\N^-$ get labels $1,\ldots,k-1$, the nodes in $\N^0$ all get the label $k$ (if $\N^0 \neq \emptyset$), and the nodes in $\N^+$ get the labels $k+1, \ldots, m$. % (or $k, \ldots, m$ if $\N^0 = \emptyset$). 
Give $n+1$ the label $k$. In general there are many such labellings. Now consider the partition $\pi'$ of $[n+1]$ which places the nodes labeled $i$ in part $\pi'_i$. %, with $n+1$ in $\pi'_k$. 
We claim that the face $(P_{\B})_{\pi'}$ is labeled by the painted $\B$-forest $\NN$. 

\begin{figure}[h]
\centering
\includegraphics[scale=.3]{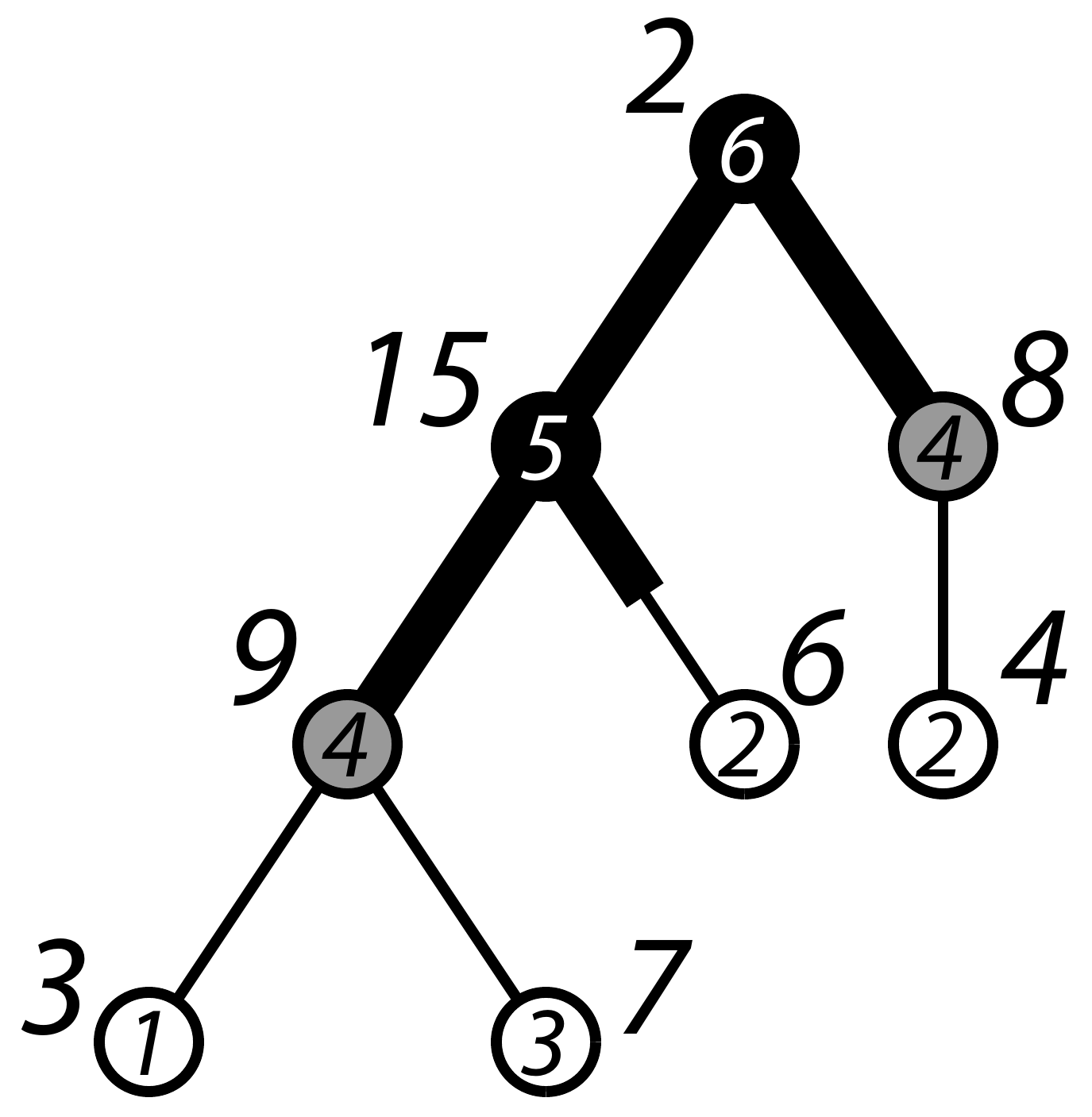}
\caption{A painted $\B$-forest and a suitable labeling of its nodes, which gives $\pi'=3|46|7|89{\bf 10}|15|2$. \label{fig:labeling}}
\end{figure}

First we show that $\N \subseteq \N_{\pi'}(\B)$. As before, let $j(B) = \max\{j \, : \, B \cap \pi'_j \neq \emptyset\}$. Indeed, if we had $N \in \N \backslash \N_{\pi'}(\B)$, we would have $N \subsetneq B \in \B$ with $j(N) = j(B) = j$. Consider the maximal sets $N_1, \ldots, N_k$ of $\N$ that $B$ intersects. They must all have $j(N_i) \leq j$. Since the numbers on the nodes increase strictly up the forest, $N$ must be one of the $N_i$s, and it cannot be the only one. By property (B1) of building sets we conclude that $B \cup N_1 \cup \cdots N_k = N \cup N_1 \cup \cdots N_k \in \B$, which contradicts property (N2) of nested sets. 

Now we show that $\N_{\pi'}(\B) \subseteq \N$. Assume we had  $B \in \N_{\pi'}(\B) \backslash  \N$. Consider the minimal $N \in \N$ containing $B$. Since $j(B) < j(N)$, $B$ cannot intersect $N_{\pi'}$. Let $N_1, \ldots, N_k$ be the maximal sets in $\N$ that $B$ intersects. They are all strict subsets of $N$, and there are at least two of them by the minimality of $N$. Then by (B1) we have $B \cup N_1 \cup \cdots N_k = N_1 \cup \cdots N_k \in \B$, which again contradicts (N2).

We conclude that $\N = \N_{\pi'}(\B)$. From the construction of $\pi'$ we see that block $k$ of $\pi$ consists of $n+1$ and the union of the sets in $\N^0$, so we also have  $\NN = \NN_{\pi'}(\B)$ as desired.

Finally, we check that this bijection between faces of ${\mathcal J}{\B}$ and painted $\B$-forests is order-reversing. Let $F_1$ be a face given by a painted $\B$-forest $\N_1$ and let $\pi^1$ be a finest partition of $[n+1]$ realizing it, so $F_1 =P_{\pi^1}$. Consider a face $F_2$ covering $F_1$; say it corresponds to tree $\N_2$. We can write $F_2=P_{\pi^2}$ for a partition $\pi^2$  obtained from $\pi^1$ by merging two parts. If both parts precede (or both succeed) $n+1$ in $\pi^1$, then we are contracting a WW edge (or a BB edge) to get from $\N_1$ to $\N_2$. If $n+1$ is its own block in $\pi^1$, and it is being merged with a block preceding it (or succeeding it), then we are turning one or more white (or black) vertices into grey vertices. If $n+1$ is not its own block, and it is being merged with a block preceding it (or succeeding it), then we are contracting a GW edge (or contracting a BG bunch.) Therefore $\N_1$ covers $\N_2$. The converse follows by a similar and easier argument.
\end{proof}

Figure \ref{fig:labeling} shows that the \emph{painted nested set complex}, which is dual to the nestomultiplihedron, is not necessarily a simplicial complex.

\begin{cor}
The lifting of the nestohedron $\mathcal{K}\B$ is the nestomultiplihedron $\mathcal{J}{\B}$.
\end{cor}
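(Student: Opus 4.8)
The corollary asserts that the $q$-lifting of the nestohedron $\mathcal{K}\B$ equals the nestomultiplihedron $\mathcal{J}\B$. The plan is to unwind the definitions and see that this is a direct consequence of Proposition \ref{prop:P(q)linear} together with Theorem \ref{th:nesto}.

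First I would recall that, by Theorem \ref{th:nest}, the nestohedron is $\mathcal{K}\B = \sum_{B \in \B} \Delta_B$, written in the signed-Minkowski-sum form $P_n(\{y_I\}) = \sum_I y_I \Delta_I$ with $y_B = 1$ for $B \in \B$ and $y_I = 0$ otherwise (note there is no sign subtlety here since all coefficients are $0$ or $1$). Now apply Proposition \ref{prop:P(q)linear}, which tells us that the $q$-lifting of $\sum_I y_I \Delta_I$ is $q \sum_I y_I \Delta_I + (1-q)\sum_I y_I \Delta_{I \cup \{n+1\}}$. Substituting the $y$-parameters of $\mathcal{K}\B$ gives
\[
(\mathcal{K}\B)(q) = q \sum_{B \in \B} \Delta_B + (1-q) \sum_{B \in \B} \Delta_{B \cup \{n+1\}}.
\]
Since $0 < q < 1$, this polytope is a Minkowski sum of positive dilates of the simplices $\Delta_B$ and $\Delta_{B \cup \{n+1\}}$, and rescaling a Minkowski summand does not change the normal fan, hence does not change the combinatorial type. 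Thus $(\mathcal{K}\B)(q)$ is combinatorially isomorphic to $\sum_{B \in \B} \Delta_B + \sum_{B \in \B} \Delta_{B \cup \{n+1\}}$, which is precisely the nestomultiplihedron $\mathcal{J}\B$ of Theorem \ref{th:nesto}. By Corollary \ref{cor:lifting}, the choice of $q \in (0,1)$ is immaterial, so the lifting of $\mathcal{K}\B$ is $\mathcal{J}\B$ as claimed.

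There is essentially no obstacle here: the work has already been done in Theorem \ref{th:nesto} (which supplies the combinatorial description of $\mathcal{J}\B$ via painted $\B$-forests) and in the structural results of Section \ref{section:q-liftings}. The only point requiring a word of care is the passage from the dilated summands $q\Delta_B$ and $(1-q)\Delta_{B\cup\{n+1\}}$ to the undilated ones — i.e., observing that the combinatorial type of a Minkowski sum of polytopes is unchanged under positive scaling of the summands, equivalently that the normal fan is unchanged — and this is exactly the content invoked in the proof of Corollary \ref{cor:lifting}. Hence the corollary follows immediately, and I would state the proof in just these few lines.
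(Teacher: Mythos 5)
Your proof is correct and follows essentially the same route as the paper: both reduce the claim to the identity $(\mathcal{K}\B)(q) = q\sum_{B\in\B}\Delta_B + (1-q)\sum_{B\in\B}\Delta_{B\cup\{n+1\}}$ and then observe that positive rescaling of Minkowski summands preserves the normal fan and hence the combinatorial type, so this agrees with $\mathcal{J}\B$ as described in Theorem \ref{th:nesto}. The only cosmetic difference is that you invoke Proposition \ref{prop:P(q)linear} where the paper cites Proposition \ref{prop:q-lift}; these are interchangeable here.
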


\begin{proof}
In light of Proposition \ref{prop:q-lift} and Theorem \ref{th:nesto}, this follows from the fact that the polytopes $\sum_{B \in \B} \Delta_B + \sum_{B \in \B} \Delta_{B \cup \{n+1\}}$ and $q\sum_{B \in \B} \Delta_B + (1-q)\sum_{B \in \B} \Delta_{B \cup \{n+1\}}$ have the same combinatorial type. \end{proof}

\begin{rem}
In \cite{DF}, Devadoss and Forcey asked for a nice Minkowski decomposition of the \emph{graph multiplihedron} ${\mathcal K}G$. By definition, ${\mathcal K}G$ is combinatorially isomorphic to the nestomultiplihedron for the building set $\B(G)$ of the graph $G$. Therefore Theorem \ref{th:nesto} offers an answer to their question.
\end{rem}

\begin{rem}\label{rem:nest}
Notice that the nestohedron is, up to translation (\emph{resp.} scaling) the face of the nestomultiplihedron that maximizes (\emph{resp.} minimizes) the linear function $x_{n+1}$. Therefore the face poset of the nestomultiplihedron ${\cal J} B$ contains two copies of the face poset of the nestohedron ${\cal N} B$, corresponding to the subposets of fully painted and fully unpainted $\B$-forests, respectively. This gives another proof of Theorem \ref{th:nest}.
\end{rem}

\section{\textsf{$\pi$-liftings and volumes.}}\label{section: face q-liftings}

We will now modify the lifting operation and define, for each ordered partition $\pi$ of $[n]$ and $0 \leq q \leq 1$, the $(\pi, q)$-lifting $P^\pi(q)$.
% the \emph{$(\pi,q)$-lifting operator} $P^\pi(q)$, which acts on a specific face $P_\pi$ of a generalized permutahedron instead of on $P$ as a whole. 
 This construction is useful in that it subdivides the polytope $P(q)$ into pieces whose volumes are easier to compute; \emph{i.e.}
\[
P(q) = \bigcup_{\pi\in\mathcal{P}^n}P^\pi(q), \qquad \textrm{int } P^{\pi}(q)  \cap \textrm{int } P^{\pi'}(q) = \emptyset \,\, \textrm{ for } \pi \neq \pi',
\]
so
\[
\vol_n(P(q)) = \sum_{\pi\in \mathcal{P}^n} \vol_n(P^\pi(q)).
\]
We will see that $\vol_n(P^\pi(q))$ is an interesting polynomial in $q$, which we will explore in greater depth in Part 2. %Section \ref{section: composition polynomials}.

For the sake of visualization and the cleanliness of formulas, in this section we will treat $P(q)$ as a full-dimensional polytope in $\mathbb{R}^n$ via projection onto the hyperplane $x_{n+1}=0$, rather than as a polytope of codimension $1$ in $\mathbb{R}^{n+1}$. Thus if $P = P_n(\{z_I\})$ then it follows from Definition \ref{prop: z_I} that $P(q)$ will have hyperplane description
\[
P(q) = \left\{ x\in \mathbb{R}^n :  q z_I \leq \sum_{i\in I} x_i \leq z_{[n]}-z_{[n]\setminus I} \text{ for all } I\subseteq [n]\right\}.
\]

\begin{deff}
Let $P$ be a generalized permutahedron in $\R^n$. 
%Define the $(\pi,q)$-lifting of $P$ as follows. 
Let $\pi=\pi_1|\cdots|\pi_k$ be an ordered partition of $[n]$ and let $0 \leq q \leq 1$. Let $P_\pi$ be the face of $P$ that maximizes a linear functional of type $\pi$. For $i=0,\dots,k$ construct a modified copy $P_\pi^i$ of $P_\pi$ by applying a factor of $q$ to the coordinates of the vertices of $P_\pi$ whose indices belong to the first $i$ blocks of $\pi$,  $\pi_1\cup\cdots\cup \pi_i$. The convex hull of all of these modified copies of $P_\pi$ is the \emph{$(\pi,q)$-lifting} of $P$. We denote it as $P^\pi(q)$, and sometimes we simply call it the \emph{$\pi$-lifting} of $P$.
\end{deff}

Note that each ordered partition $\pi$ corresponds to a different $\pi$-lifting $P^\pi(q)$. Even if $P_\pi = P_\mu$, the $\pi$-liftings $P^\pi(q)$ and $P^\mu(q)$ will be distinct for $\pi\neq\mu$.

\begin{exa}
Consider the associahedron $\mathcal{K}(4)$. Since $\mathcal{K}(4)_{1|3|2}$ is the point $(1,4,1)$,  the $\pi$-lifting
\[
\mathcal{K}(4)^{1|3|2}(q) = \text{conv}\{ (1,4,1), (q,4,1), (q,4,q), (q,4q,q) \}.
\] 
%This and other $\pi$-liftings are pictured in Figure \ref{fig: face liftings}.

\begin{figure}[h]
\centering
\includegraphics[scale=.24]{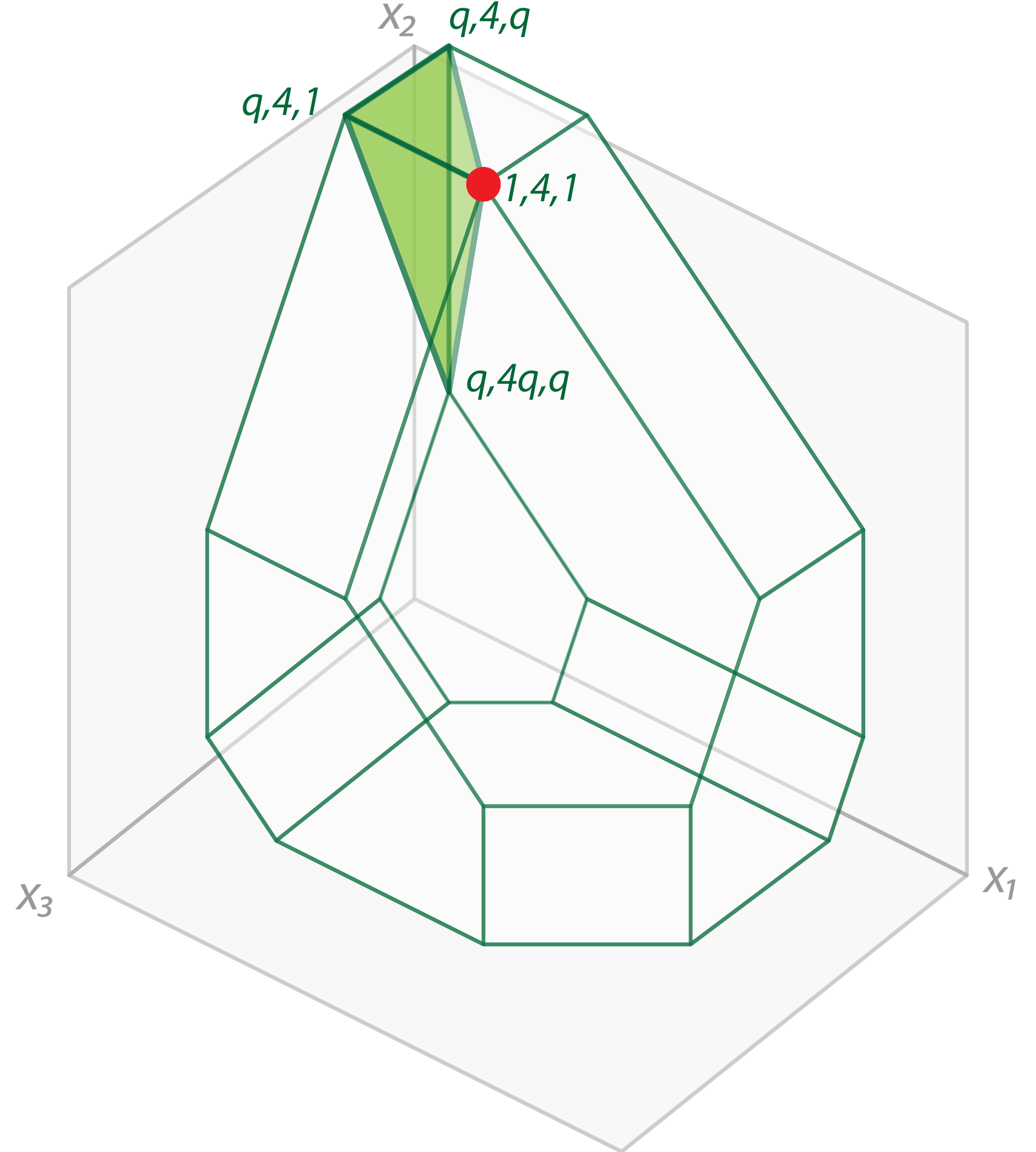} \includegraphics[scale=.24]{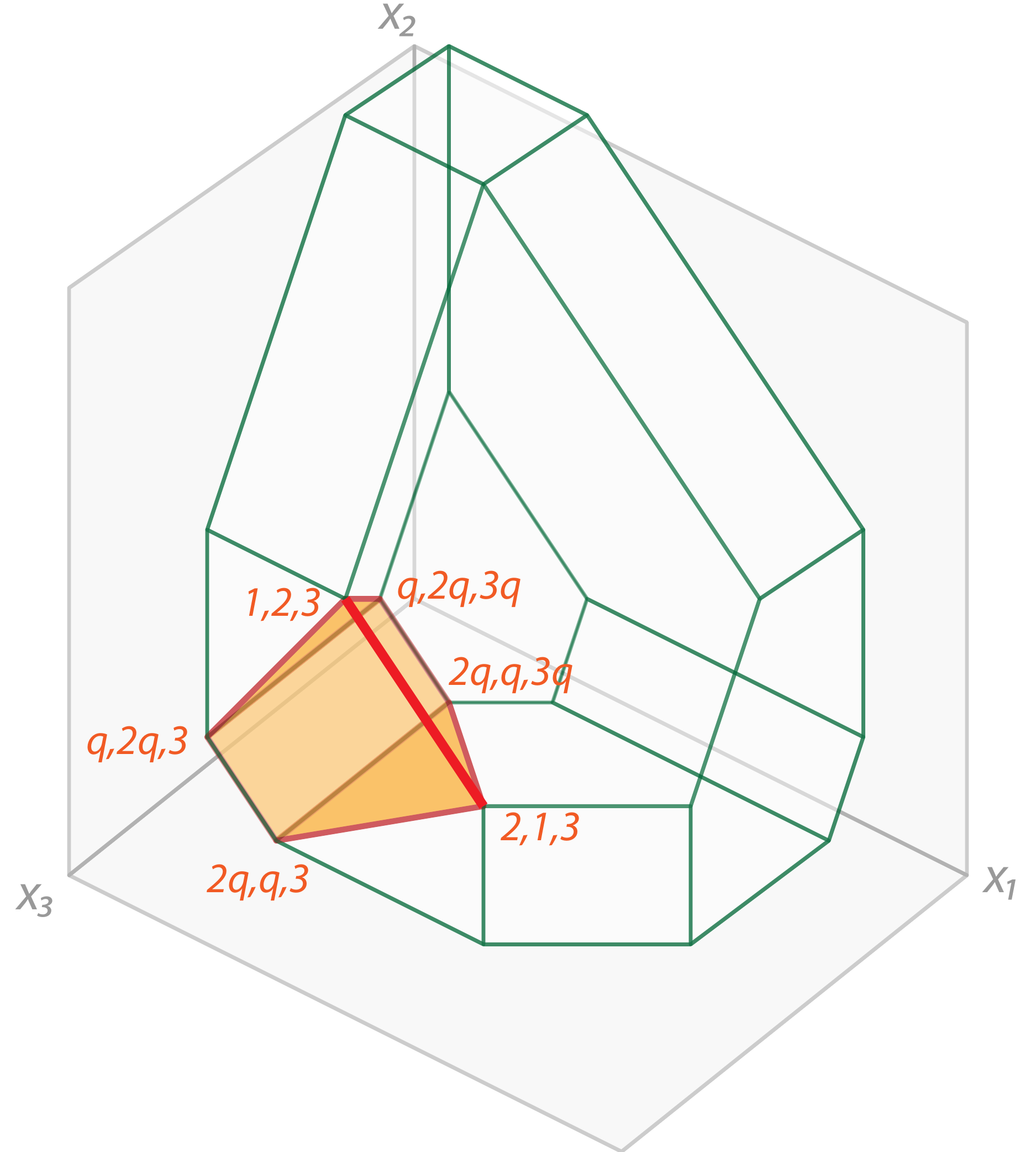} \includegraphics[scale=.24]{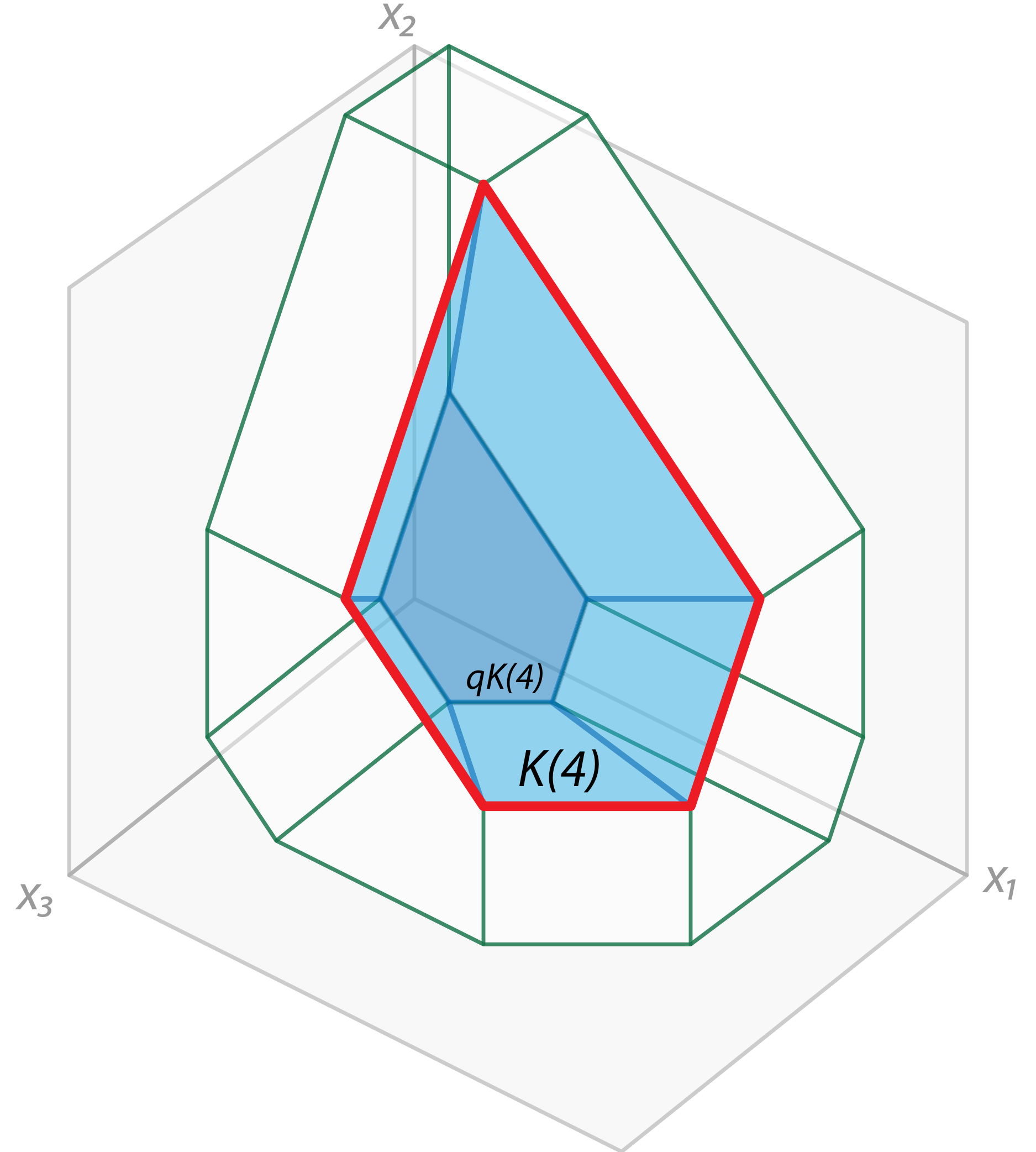} 
\caption{Three $\pi$-liftings of the associahedron $\mathcal{K}(4)$: $\mathcal{K}(4)^{1|3|2}(q)$, $\mathcal{K}(4)^{12|3}(q)$, and $\mathcal{K}(4)^{123}(q)$. The bold regions represent the faces $\mathcal{K}(4)_\pi$.}\label{fig: face liftings}
\end{figure} 
\end{exa}
%
%For the notation $P^\pi(q)$ we use the superscript on $\pi$ instead of the subscript to avoid ambiguity. Take for example $P = \mathcal{K}(4)$. Then $P_{123}=P$, and thus $P_{123}(q)$ is interpreted as the regular $q$-lifting $P(q)$. However $P^{123}(q)$, pictured in Figure \ref{fig: face liftings}, is properly contained in $P(q)$. 
% 
%Next we will prove that the $(\pi,q)$-liftings of $P$ subdivide the polytope $P(q)$ into simpler pieces. In particular, we will calculate the volume of a $(\pi,q)$-lifting $P^\pi(q)$ directly using integrals, and thus obtain a formula for the volume of $P(q)$.

\begin{deff}
For a subset $I\subseteq [n]$ define $x_I := \sum_{i\in I} x_i$. For a generalized permutahedron $P=P_n(\{z_I\})$ and an ordered partition $\pi = \pi_1|\cdots|\pi_k$ define
\[
z_\pi^{\pi_i} := z_{\pi_1\cup\cdots\cup \pi_i} - z_{\pi_1\cup\cdots\cup \pi_{i-1}}
\]
For a minimal refinement $\pi'=\pi_1|\cdots|\pi_{i-1}|C_i|D_i|\pi_{i+1}|\cdots|\pi_k$, where $\pi_i=C_i \sqcup D_i$ is a disjoint union, we have
\[
z_{\pi'}^{C_i} := z_{\pi_1\cup\cdots\cup \pi_{i-1}\cup C_i} - z_{\pi_1\cup\cdots\cup \pi_{i-1}} \text{ and}
\]
\[
z_{\pi'}^{D_i} := z_{\pi_1\cup\cdots\cup \pi_{i}} - z_{\pi_1\cup\cdots\cup \pi_{i-1}\cup C_i}.
\]
\end{deff}

\begin{prop}\label{prop: wedge ineqs}
For a generalized permutahedron $P=P_n(\{z_I\})$ and an ordered partition $\pi=\pi_1|\cdots|\pi_k$ the $\pi$-lifting $P^\pi(q)$ has the hyperplane description:
\begin{align}
& q \,\, \leq \,\, \frac{x_{\pi_1}}{z_\pi^{\pi_1}}\,\,  \leq \,\, \cdots\,\,  \leq\,\,  \frac{x_{\pi_k}}{z_\pi^{\pi_k}}\,\,  \leq\,\,  1, \label{simplicial} \tag{S} \\
& \frac{x_{C_i}}{z_{\pi'}^{C_i}} \,\, \geq \,\,  \frac{x_{D_i}}{z_{\pi'}^{D_i}}  \quad\textrm{ for all $i$ and all disjoint decompositions } \pi_i = C_i \sqcup D_i. \tag{F} \label{facial}
\end{align}
\end{prop}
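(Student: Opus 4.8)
The plan is to reduce the statement to the product structure of the face $P_\pi$ and then unwind Postnikov's inequality description of each factor. First I would recall, from Proposition \ref{prop:props}(3), that $P_\pi = P_1\times\cdots\times P_k$, where $P_j\subseteq\R^{\pi_j}$ is the generalized permutahedron $P(\{z_I^j\}_{I\subseteq\pi_j})$ with $z_I^j = z_{\pi_1\cup\cdots\cup\pi_{j-1}\cup I}-z_{\pi_1\cup\cdots\cup\pi_{j-1}}$; in particular every point of $P_j$ has coordinate sum $z_{\pi_j}^j = z_\pi^{\pi_j}$. Since scaling the coordinates in the first $i$ blocks by $q$ is a linear map and linear maps commute with convex hull, the modified copy is $P_\pi^i = qP_1\times\cdots\times qP_i\times P_{i+1}\times\cdots\times P_k$, and $P^\pi(q)=\conv\bigl(\bigcup_{i=0}^k P_\pi^i\bigr)$. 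Throughout I write $\lambda_j := x_{\pi_j}/z_\pi^{\pi_j}$, and I would assume the denominators $z_\pi^{\pi_j}$ (and, below, $z_{\pi'}^{C}$ and $z_{\pi'}^{D}$) are positive, as holds whenever $P$ is full-dimensional, so that the fractions in the statement are meaningful; the general case then follows by continuity from $0<q<1$.

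The heart of the argument is the identity
\[
P^\pi(q) \;=\; \Bigl\{\, x\in\R^n \;:\; q\le\lambda_1\le\cdots\le\lambda_k\le 1 \ \text{ and } \ \lambda_j^{-1}\,x|_{\pi_j}\in P_j \text{ for all } j \,\Bigr\}.
\]
For the inclusion $\subseteq$: on $P_\pi^i$ one has $\lambda_j=q$ for $j\le i$ and $\lambda_j=1$ for $j>i$, and $\lambda_j^{-1}x|_{\pi_j}\in P_j$; for a convex combination $x=\sum_i t_i x^{(i)}$ with $x^{(i)}\in P_\pi^i$ one gets $\lambda_j = 1-(1-q)\sum_{i\ge j}t_i$, which is weakly increasing in $j$ and lies in $[q,1]$, while $\lambda_j^{-1}x|_{\pi_j}$ lies in $\conv(P_j\cup qP_j)=\bigcup_{\mu\in[q,1]}\mu P_j$. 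This last union is convex, since $\mu_1 r_1+\mu_2 r_2 = (\mu_1+\mu_2)\bigl(\tfrac{\mu_1}{\mu_1+\mu_2}r_1+\tfrac{\mu_2}{\mu_1+\mu_2}r_2\bigr)$ and similarly for longer convex combinations, and on $\mu P_j$ the coordinate sum forces $\mu=\lambda_j$, so $\lambda_j^{-1}x|_{\pi_j}\in P_j$. For $\supseteq$: given such an $x$, set $s_0=1$, $s_j=(1-\lambda_j)/(1-q)$ for $1\le j\le k$ and $s_{k+1}=0$, so $1=s_0\ge s_1\ge\cdots\ge s_{k+1}=0$; put $t_i=s_i-s_{i+1}\ge 0$ (these sum to $1$) and $r_j:=\lambda_j^{-1}x|_{\pi_j}\in P_j$. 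Define $x^{(i)}$ by $x^{(i)}|_{\pi_j}=qr_j$ if $j\le i$ and $x^{(i)}|_{\pi_j}=r_j$ if $j>i$; then $x^{(i)}\in P_\pi^i$, and $\sum_i t_i\,x^{(i)}|_{\pi_j}= r_j\bigl(\sum_{i<j}t_i + q\sum_{i\ge j}t_i\bigr)= r_j\bigl(1-(1-q)s_j\bigr)=\lambda_j r_j = x|_{\pi_j}$, so $x=\sum_i t_i x^{(i)}\in P^\pi(q)$.

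It then remains to translate the two conditions of the displayed identity. The chain $q\le\lambda_1\le\cdots\le\lambda_k\le 1$ is literally the system (S). For the second condition, by Postnikov's description $\lambda_j^{-1}x|_{\pi_j}\in P_j=P(\{z_I^j\}_{I\subseteq\pi_j})$ amounts to $\sum_{i\in\pi_j}(x_i/\lambda_j)=z_{\pi_j}^j$, which is automatic since $z_{\pi_j}^j=z_\pi^{\pi_j}$ and this is the definition of $\lambda_j$, together with $\sum_{i\in C}(x_i/\lambda_j)\ge z_C^j$ for every nonempty $C\subsetneq\pi_j$. Fix such a $C$, put $D:=\pi_j\setminus C$, and note from the definitions that $z_C^j=z_{\pi'}^{C}$ and $z_\pi^{\pi_j}-z_C^j=z_{\pi'}^{D}$ for the refinement $\pi'=\pi_1|\cdots|\pi_{j-1}|C|D|\pi_{j+1}|\cdots|\pi_k$. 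Since $\lambda_j=x_{\pi_j}/z_\pi^{\pi_j}=(x_C+x_D)/(z_{\pi'}^{C}+z_{\pi'}^{D})$, the inequality $x_C\ge\lambda_j z_{\pi'}^{C}$ clears denominators to $x_C\,z_{\pi'}^{D}\ge x_D\,z_{\pi'}^{C}$, i.e.\ $\frac{x_C}{z_{\pi'}^{C}}\ge\frac{x_D}{z_{\pi'}^{D}}$, which is precisely (F); conversely (F) yields the corresponding Postnikov inequality. As $C$ runs over all nonempty proper subsets of $\pi_j$ and $j$ over all blocks, this is an exact correspondence, and collecting the two translations finishes the proof.

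I expect the main obstacle to be the inclusion $\supseteq$ in the displayed identity — actually exhibiting the convex decomposition $x=\sum_i t_i x^{(i)}$ from the staircase data $s_j$ — together with the small convexity lemma $\conv(P_j\cup qP_j)=\bigcup_{\mu\in[q,1]}\mu P_j$ used for $\subseteq$. The remaining steps (the product description of $P_\pi^i$, the identification of the staircase inequalities with (S), and the denominator-clearing that turns the factorwise membership into (F)) are routine once these are in place. The only other point needing a word of care is positivity of the $z$-values in the denominators, which I would dispatch with a brief genericity/continuity remark.
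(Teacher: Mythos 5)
Your proof is correct and follows essentially the same route as the paper's: both directions hinge on the same blockwise rescaling $p|_{\pi_j}=\lambda_j^{-1}x|_{\pi_j}$ and the same explicit convex combination of the modified copies (your coefficients $t_i=s_i-s_{i+1}$ coincide with the paper's $a_1-\tfrac{q}{1-q}$, $a_{i+1}-a_i$, $\tfrac{1}{1-q}-a_k$). The only packaging difference is that you certify $p\in P_\pi$ via the product decomposition $P_\pi=P_1\times\cdots\times P_k$ and Postnikov's description of each factor, whereas the paper checks $p_C\ge z_C$ for arbitrary $C\subseteq[n]$ directly from supermodularity; note also that positivity of the denominators comes from the standing assumption that $P$ lies in the interior of the positive orthant rather than from full-dimensionality.
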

For reasons to become clear later, we call the inequalities of the first type the \emph{simplicial inequalities} of $P^\pi(q)$, and those of the second type the \emph{facial inequalities}. Since $x_{C_i}+x_{D_i} = x_{\pi_i}$ and $z_{\pi'}^{C_i}+z_{\pi'}^{D_i}=z_{\pi}^{\pi_i}$, the facial inequalities can be rewritten as 
\[
\frac{x_{C_i}}{z_{\pi'}^{C_i}} \geq  \frac{x_{\pi_i}}{z_\pi^{\pi_i}} \quad \textrm{ or equivalently as } \quad 
\frac{x_{D_i}}{z_{\pi'}^{D_i}} \leq  \frac{x_{\pi_i}}{z_\pi^{\pi_i}}.
\]

\begin{proof}[Proof of Proposition \ref{prop: wedge ineqs}]
First we claim that any vertex (and hence any point) of $P^\pi(q)$ satisfies the given inequalities. The face $P_\pi$ consists of the points $x$ in $P$ that satisfy $x_{\pi_i} = z_\pi^{\pi_i}$ for $i=1,\dots,k$. For any vertex $v$ of $P_\pi^j$, $v_{\pi_i}/z_\pi^{\pi_i}$ equals $q$ if $i \leq j$ and $1$ if $i>j$, so $v$ satisfies the simplicial inequalities (\ref{simplicial}). Now, for any vertex $v$ of $P_\pi$ we have %satisfy the facial inequalities because
$v_{D_i} + (z_{[n]} - z_{\pi_1 \cup \cdots \cup \pi_i}) = v_{D_i}+v_{\pi_{i+1}}+ \cdots + v_{\pi_k} = v_{D_i \cup \pi_{i+1} \cup \cdots \cup \pi_k} \leq z_{[n]}-z_{\pi_1 \cup \cdots \cup \pi_{i-1} \cup C_i}$ so 
%$\frac{v_{D_i}}{z_{\pi'}^{D_i}} \leq 1 =\frac{v_{\pi_i}}{z_\pi^{\pi_i}}.$ 
${v_{D_i}}/{z_{\pi'}^{D_i}} \leq 1 = {v_{\pi_i}}/{z_\pi^{\pi_i}}.$ 
So all vertices of $P_\pi$, and therefore those of 
%It follows that the vertices of 
$P_\pi^j$ 
satisfy the facial inequalities (\ref{facial}) as well.
The claim follows.

Conversely, given a point $x$ which satisfies the given inequalities, we show that $x \in P^\pi(q)$. For a subset $S \subseteq [n]$ and a vector $x \in \R^n$, write $x|_S$ for the ``restriction" vector in $\R^S$ whose coordinates are the $S$-coordinates of $x$.
Define $p \in \R^n$ by
\[
p|_{\pi_i} = x|_{\pi_i} \cdot \frac{z_\pi^{\pi_i}}{x_{\pi_i}} \qquad \textrm{ for } i=1, \ldots, k.
\]
We have $p_{\pi_i} = z_\pi^{\pi_i}$ for all $i$. Let $p^i$ be the point obtained from $p$ by multiplying the entries in $\pi_1 \cup \cdots \cup \pi_i$ by $q$. We will show that $x$ is a convex combination of $p^0, p^1, \ldots, p^k$, and that these $k+1$ points are in $P^\pi(q)$. This will imply that $x \in P^\pi(q)$.

To show the first claim, we write  $a_i = \left(\frac{x_{\pi_i}}{z_\pi^{\pi_i}}\right)/(1-q)$, and compute
%\begin{eqnarray*}
%x &=& \sum_{i=1}^k x|_{B_i} = \sum_{i=1}^k p|_{B_i} \cdot \frac{x_{B_i}}{z_\pi^{B_i}}= \sum_{i=1}^k \frac{p^{i-1}-p^{i}}{1-q}  \cdot \frac{x_{B_i}}{z_\pi^{B_i}} \\
%&=& \frac{p^0}{1-q} + \sum_{i=1}^{k-1} \frac{p^i}{1-q}\left(\frac{x_{B_{i+1}}}{z_\pi^{B_{i+1}}} -\frac{x_{B_i}}{z_\pi^{B_i}}\right) - \frac{p^k}{1-q} \cdot \frac{x_{B_k}}{z_\pi^{B_k}}\\
%&=& \frac{p^0}{1-q} \left(\frac{x_{B_1}}{z_\pi^{B_1}}-q\frac{x_{B_k}}{z_\pi^{B_k}}\right) + \sum_{i=1}^{k-1} \frac{p^i}{1-q}\left(\frac{x_{B_{i+1}}}{z_\pi^{B_{i+1}}} -\frac{x_{B_i}}{z_\pi^{B_i}}\right)
%\end{eqnarray*}
\begin{eqnarray*}
x &=& \sum_{i=1}^k x|_{\pi_i}  = \sum_{i=1}^k p|_{\pi_i} \cdot \frac{x_{\pi_i}}{z_\pi^{\pi_i}} = \sum_{i=1}^k \frac{p^{i-1}-p^{i}}{1-q}   \frac{x_{\pi_i}}{z_\pi^{\pi_i}}
\\ &=& \sum_{i=1}^k (p^{i-1}-p^{i}) \cdot a_i  
= p^0 a_1 + \sum_{i=1}^{k-1} p^i(a_{i+1} - a_i) - p^k a_k \\
&=& p^0 \left(a_1-\frac{q}{1-q}\right) + \sum_{i=1}^{k-1} p^i\left(a_{i+1} - a_i\right) +p^k\left(\frac{1}{1-q}-a_k\right),
\end{eqnarray*}
where the coefficients are non-negative by assumption, and add up to $1$, as desired.

Now we prove that $p \in P_\pi$, which will imply that $p^i \in P_\pi^i \subset P^\pi(q)$ for all $i$. By definition $p$ satisfies all the equalities $x_{\pi_i} = z_\pi^{\pi_i}$ for $i=1,\dots,k$ that hold in the face $P_\pi$. Now let us check that it satisfies all inequalities as well. We need to check that $p_C \geq z_C$ for all $C \subseteq [n]$. Write $C = C_1 \cup \cdots \cup C_k$ where $C_i:= C \cap \pi_i \subseteq \pi_i$, so $p_C = p_{C_1} + \cdots + p_{C_k}$. 
%We then have
%\[
%p_A = \sum_{i=1}^k p_{A_i}  = \sum_{i=1}^k x_{A_i} \cdot \frac{z_\pi^{B_i}}{x_{B_i}}
%\]
Applying the facial inequalities, we have
\[
p_{C_i} = x_{C_i} \cdot \frac{z_\pi^{\pi_i}}{x_{\pi_i}} \geq z_{\pi'}^{C_i} = z_{\pi_1 \cup \cdots \cup \pi_{i-1} \cup C_i} - z_{\pi_1 \cup \cdots \cup \pi_{i-1}} 
\]

%\[
%p_{A_i} = x_{A_i} \cdot \frac{z_\pi^{B_i}}{x_{B_i}} \leq z_{\pi'}^{A_i} = z_{B_1 \cup \cdots \cup B_{i-1} \cup B_i} - z_{B_1 \cup \cdots \cup B_{i-1} \cup \overline{A_i}} 
%\]
The supermodularity of $z$ then gives
\[
p_{C_i} \geq z_{C_1 \cup \cdots \cup C_{i-1} \cup C_i} \,\, - \,\, z_{C_1 \cup \cdots \cup C_{i-1}} 
\]
which implies that $p_C \geq z_{C_1 \cup \cdots \cup C_k} = z_C$ as desired.
\end{proof}

%We can also decompose a $q$-lifted face $P^\pi(q)$ into a Minkowski sum.

\begin{cor}\label{prop: face decomp}
The $\pi$-lifting $P^\pi(q)$ can be decomposed into the Minkowski sum
\[
P^\pi(q) = qP_\pi + (1-q) P^\pi(0).
\]
\end{cor}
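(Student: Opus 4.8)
The plan is to prove the Minkowski-sum identity $P^\pi(q) = qP_\pi + (1-q)P^\pi(0)$ by showing that both sides have the same hyperplane description, using the explicit inequalities for $\pi$-liftings supplied by Proposition \ref{prop: wedge ineqs}. First I would record what the two summands on the right look like. The face $P_\pi$ sits in the affine subspace cut out by $x_{\pi_i} = z_\pi^{\pi_i}$ for $i=1,\dots,k$, and inside that subspace it is governed by the facial inequalities $x_{C_i}/z_{\pi'}^{C_i} \geq x_{D_i}/z_{\pi'}^{D_i}$ (these are exactly the inequalities of $P$ restricted to the face, written in the normalized form of Proposition \ref{prop:props}(3)). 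So $qP_\pi$ is the polytope with $x_{\pi_i} = q\,z_\pi^{\pi_i}$ for all $i$ together with those same facial inequalities. On the other side, $P^\pi(0)$ is obtained from the definition by applying a factor of $0$ to the coordinates in the first $i$ blocks for each $i$ — so it is the convex hull of $P_\pi$ itself (the $i=0$ copy) together with a chain of degenerations ending at the origin-supported copies; its inequality description from Proposition \ref{prop: wedge ineqs} with $q=0$ is $0 \leq x_{\pi_1}/z_\pi^{\pi_1} \leq \dots \leq x_{\pi_k}/z_\pi^{\pi_k} \leq 1$ plus the same facial inequalities (F), which do not involve $q$.

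Next I would combine these. Minkowski sums of generalized permutahedra add $z$-parameters (cited as \cite{ABD, Po} in the excerpt), and all three polytopes $P^\pi(q)$, $P_\pi$, $P^\pi(0)$ are faces/liftings living in the same combinatorial family, so it suffices to check the identity on the level of the normalized $z$-data appearing in the inequalities of Proposition \ref{prop: wedge ineqs}. Concretely: the simplicial inequalities of $qP_\pi + (1-q)P^\pi(0)$ should read $q \leq x_{\pi_1}/z_\pi^{\pi_1} \leq \dots \leq x_{\pi_k}/z_\pi^{\pi_k} \leq 1$, because the "$x_{\pi_i} = q z_\pi^{\pi_i}$" constraints from $qP_\pi$ Minkowski-add to the "$0 \leq x_{\pi_i}/z_\pi^{\pi_i} \leq 1$" chain from $(1-q)P^\pi(0)$ to produce exactly the shift of the lower end from $0$ to $q$; and the facial inequalities (F), being identical and $q$-free in both $P_\pi$ and $P^\pi(0)$, survive the Minkowski sum unchanged (one uses that for a Minkowski sum $A+B$ an inequality valid with equality-attaining faces of matching type on both $A$ and $B$ remains facet-defining for $A+B$ with the bounds added). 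Matching these against the Proposition \ref{prop: wedge ineqs} description of $P^\pi(q)$ gives the equality of hyperplane descriptions, hence of polytopes.

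Alternatively — and this may be the cleanest route — I would argue vertex-wise. By definition $P^\pi(q)$ is the convex hull of the modified copies $P_\pi^0, \dots, P_\pi^k$, where $P_\pi^i$ scales the coordinates in $\pi_1 \cup \dots \cup \pi_i$ by $q$. A vertex of $P_\pi^i$ coming from a vertex $v$ of $P_\pi$ can be written as $v^{(i)} = q\,(\text{coords in first } i \text{ blocks of } v) + (\text{coords in later blocks of } v)$. The claim is that the map $(v,i) \mapsto v^{(i)}$ factors as $q\cdot v + (1-q)\cdot v^{[0](i)}$ where $v^{[0](i)}$ is the corresponding vertex of $P^\pi(0)$ (obtained by zeroing out the first $i$ blocks), since $q\,v_{\text{early}} + (0)\,(1-q) = q\,v_{\text{early}}$ on the first $i$ blocks and $q\,v_{\text{late}} + (1-q)v_{\text{late}} = v_{\text{late}}$ on the rest — exactly $v^{(i)}$. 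Taking convex hulls, and using that $\conv$ commutes with Minkowski sum of convex polytopes in the sense $q\conv(S) + (1-q)\conv(T) = \conv\{qs + (1-q)t\}$ when $S$, $T$ are matched up index-by-index appropriately, yields $P^\pi(q) = qP_\pi + (1-q)P^\pi(0)$; here one needs that the "matching" of vertices of $P_\pi$ with vertices of $P^\pi(0)$ is the natural one and that taking the full convex hull over all $i$ and all $v$ is compatible.

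The main obstacle I anticipate is the bookkeeping in the second approach: making precise that the convex hull of all the $P_\pi^i$ is literally the Minkowski sum and not just contained in it — i.e., that no new vertices appear and that every vertex of $qP_\pi + (1-q)P^\pi(0)$ is hit. The inequality-based first approach sidesteps this by reducing everything to the already-proved Proposition \ref{prop: wedge ineqs}, at the cost of one routine lemma about how facet inequalities behave under Minkowski sum; I would lead with that reduction and only sketch the vertex picture as intuition.
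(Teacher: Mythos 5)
Your first approach is exactly the paper's proof: read off the facet directions of all three polytopes from Proposition \ref{prop: wedge ineqs} and conclude by additivity of the hyperplane parameters (support values in those fixed directions) under Minkowski sum. The proposal is correct, just more verbose than the paper's two-line argument; the vertex-wise picture you sketch is not needed.
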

\begin{proof}
Proposition \ref{prop: wedge ineqs} tells us the facet directions of the three polytopes involved. The result then follows from the fact that hyperplane parameters are additive under Minkowski sums. 
%The normal fans of both $P_\pi = P^\pi(1)$ and $P^\pi(0)$ coarsen the normal fan of $P^\pi(q)$, and hence so does the normal fan of the right hand side. Thus the hyperplane directions used to define $P^\pi(q)$ can also be used to adequately define the other two polytopes. Moreover, the respective hyperplane parameters on the right hand side will be additive over Minkowski addition. Proposition \ref{prop: wedge ineqs} then gives the result. Substituting in $q=1$ and $q=0$ into the inequalities given in Proposition \ref{prop: wedge ineqs} gives the parameters for these respective polytopes, and summing them with respective coefficients $q$ and $1-q$ yields the hyperplane parameters for $P^\pi(q)$.
\end{proof}
%
%This decomposition preserves the flavor of our definition of the regular $q$-lifting $P(q)$, and may provide geometric insight for future work with these polytopes.

Now we show that the different $\pi$-liftings $P^\pi(q)$ fit together to subdivide $P(q)$, as illustrated in Figure \ref{fig:exploded}.

\begin{figure}[h]
\centering
\includegraphics[scale=.45]{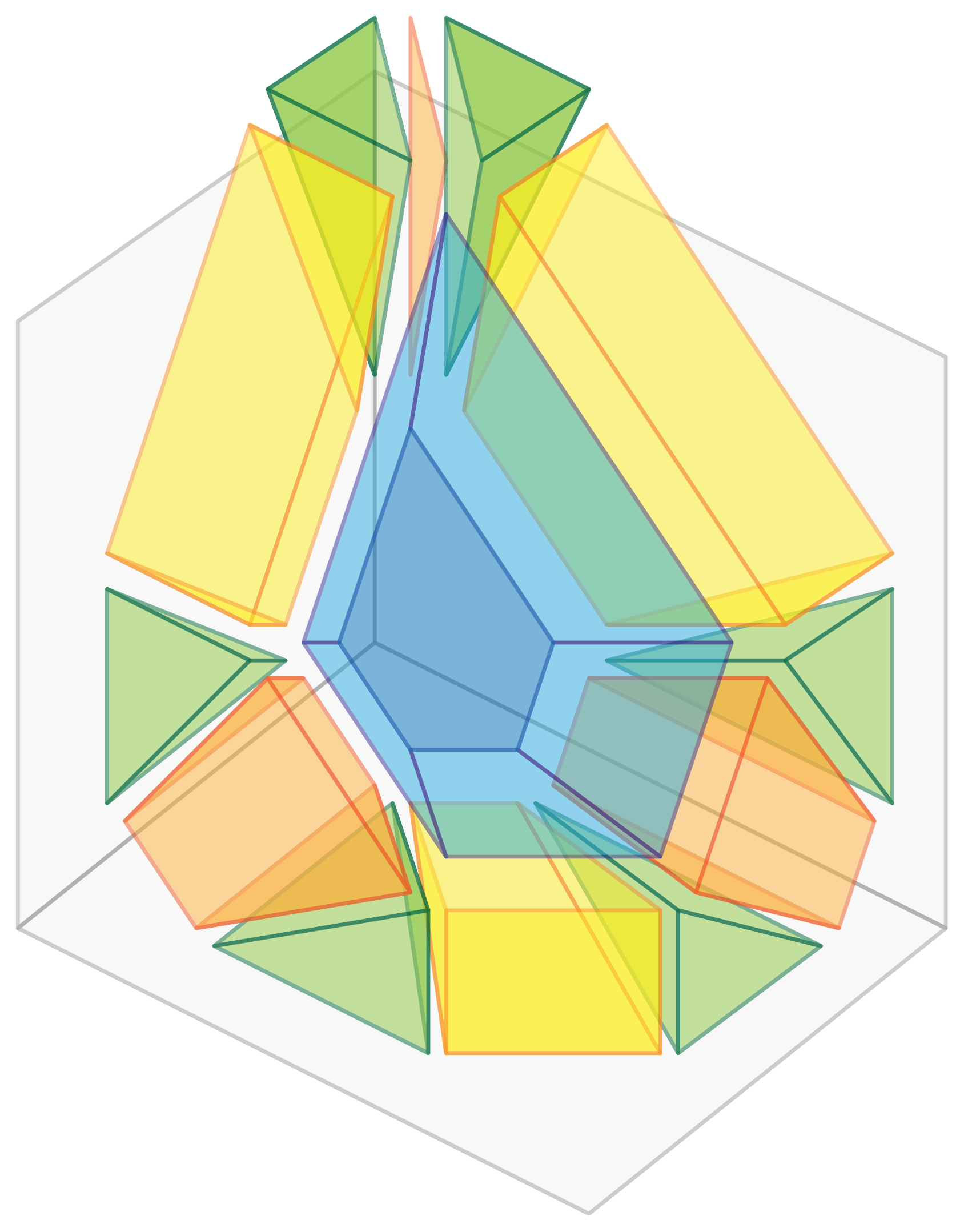} 
\caption{The subdivision of a lifted generalized permutahedron.}\label{fig:exploded}
\end{figure}

\begin{prop}
The set of $\pi$-liftings $\{P^\pi(q):\pi \text{ an ordered partition of }[n]\}$ forms a subdivision of the $q$-lifted polytope $P(q)$.
\end{prop}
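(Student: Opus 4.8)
The plan is to verify the two defining properties of a polyhedral subdivision: that the $\pi$-liftings $P^\pi(q)$ together cover $P(q)$, and that any two of them meet only along a common face (equivalently, have disjoint interiors). Both statements should follow from the hyperplane description of $P^\pi(q)$ given in Proposition \ref{prop: wedge ineqs}, combined with the hyperplane description of $P(q)$ recorded just before Definition 4.1, namely $P(q) = \{x : qz_I \le x_I \le z_{[n]} - z_{[n]\setminus I}\ \text{for all}\ I\}$.

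First I would show that each $P^\pi(q)$ is contained in $P(q)$. Take a point $x \in P^\pi(q)$; by the simplicial inequalities (\ref{simplicial}) we have $q \le x_{\pi_1\cup\cdots\cup\pi_i}/z_{\pi_1\cup\cdots\cup\pi_i}$-type bounds, and in the proof of Proposition \ref{prop: wedge ineqs} it was already shown that such an $x$ can be written as a convex combination of the points $p^0, \ldots, p^k$, each of which lies in $P_\pi^i \subset P^\pi(q)$; since each $P_\pi^i$ is obtained from a face $P_\pi$ of $P$ by scaling some coordinates by $q \in [0,1]$, one checks directly using Remark \ref{rem:shift} (so all $z_I \ge 0$) that these vertices satisfy $qz_I \le x_I \le z_{[n]} - z_{[n]\setminus I}$. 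Hence $P^\pi(q) \subseteq P(q)$ for every $\pi$, and therefore $\bigcup_\pi P^\pi(q) \subseteq P(q)$.

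For the reverse inclusion, I would take an arbitrary point $x \in P(q)$ and produce an ordered partition $\pi$ with $x \in P^\pi(q)$. The natural choice is to order the blocks of $[n]$ according to the values of the ``ratios'' that appear in the simplicial inequalities: roughly, run a greedy/parametric procedure that, moving from $q$ up to $1$, records the order in which the constraints $x_I \ge qz_I$ versus $x_I \le z_{[n]}-z_{[n]\setminus I}$ become tight, and let $\pi$ be the resulting ordered partition of $[n]$. One then must check that this $\pi$ makes $x$ satisfy both the simplicial inequalities (by construction of the ordering) and the facial inequalities (F) (using supermodularity of $\{z_I\}$, exactly as in the ``conversely'' half of the proof of Proposition \ref{prop: wedge ineqs}, run in reverse). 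This shows $P(q) \subseteq \bigcup_\pi P^\pi(q)$, giving the covering property; it is morally a translation of the statement that the normal fans of the $P^\pi(q)$ refine to cover the normal fan direction space, which is why I expect this to be the main obstacle — making the parametric selection of $\pi$ precise and checking it lands inside the right cell.

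Finally, for the interior-disjointness, I would argue that a generic point $x$ in the interior of $P(q)$ lies in a unique $P^\pi(q)$: genericity forces all the ratios $x_{\pi_i}/z_\pi^{\pi_i}$ to be distinct and all facial inequalities to be strict, so the ordered partition $\pi$ is uniquely determined by $x$; hence two distinct cells $P^\pi(q)$ and $P^\mu(q)$ cannot share an interior point, since such a point would determine both $\pi$ and $\mu$. Combined with the covering property and the fact (Corollary \ref{prop: face decomp}) that each $P^\pi(q)$ is a genuine polytope, this yields that $\{P^\pi(q)\}$ is a polyhedral subdivision of $P(q)$, and in particular $\vol_n(P(q)) = \sum_\pi \vol_n(P^\pi(q))$ as claimed in the surrounding text. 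It remains to note that the intersection of two cells is in fact a common face, which follows because each cell is cut out from $P(q)$ by inequalities chosen from a common finite family (the simplicial and facial inequalities over all refinements), so the pieces form a polyhedral complex.
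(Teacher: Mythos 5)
The covering direction $P(q) \subseteq \bigcup_\pi P^\pi(q)$ is the heart of this proposition, and your proposal does not actually prove it: you describe a ``greedy/parametric procedure'' for selecting $\pi$ from a point $x \in P(q)$ and then explicitly flag making it precise as ``the main obstacle.'' That obstacle is exactly where the content of the proof lives, and the paper resolves it with a specific construction that is absent from your sketch. Namely, for a given $x$ one plots the $2^n$ planar points $v_I = (z_I, x_I)$ and takes the \emph{lower convex hull} of this point set; the vertices of the lower hull read left to right are $v_{A_0}, v_{A_1}, \dots, v_{A_k}$, and the crucial lemma --- proved by contradiction using supermodularity of the $z_I$ --- is that consecutive hull vertices satisfy $A_{i-1} \subset A_i$, so that setting $\pi_i = A_i \setminus A_{i-1}$ yields a genuine ordered partition. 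With this construction the simplicial inequalities of $P^\pi(q)$ are precisely the statement that the lower hull has increasing slopes, and the facial inequalities are precisely the statement that every $v_C$ with $A_{i-1} \subset C \subset A_i$ lies on or above the hull edge from $v_{A_{i-1}}$ to $v_{A_i}$. Your appeal to ``supermodularity, as in the converse half of Proposition \ref{prop: wedge ineqs} run in reverse'' does not substitute for this: that argument goes from the inequalities to membership in $P^\pi(q)$, whereas here one must first identify which $\pi$ to use, and the nesting of the $A_i$ is not automatic.

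Your uniqueness-for-generic-points argument is fine in spirit and matches the paper's, but it too presupposes that the assignment $x \mapsto \pi$ has been defined. One smaller caution: your closing claim that the pieces form a polyhedral complex ``because each cell is cut out by inequalities from a common finite family'' is not a valid inference in general (a common pool of inequalities does not guarantee that cells meet in common faces); the paper itself only establishes covering plus disjoint interiors, which is what is used downstream for the volume formula. To repair the proposal, replace the unspecified parametric procedure with the lower-hull construction and prove the nesting claim $A_{i-1} \subset A_i$ via supermodularity.
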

\begin{proof}
Let $\pi=\pi_1|\cdots|\pi_k$ be an ordered partition and let $A_i = \pi_1\cup\cdots\cup \pi_i$. Recall that we have assumed that $P$ has been translated to sit in the interior of the positive orthant of $\mathbb{R}^n$. This means that every $x\in P$ will have all strictly positive coordinates, and $z_I< z_J$ for $I\subsetneq J$. We will now reinterpret the inequality description parameters of $P^\pi(q)$ in terms of slopes. For a point $x\in \mathbb{R}^n$ let $v_{I} = (z_{I},x_I)\in \mathbb{R}^2$, where $x_I = \sum_{i\in I} x_i$ as above. For $x\in P^\pi(q)$ the term $\frac{x_{\pi_{i}}}{z^{\pi_{i}}} = \frac{x_{A_i}-x_{A_{i-1}}}{z_{A_i}-z_{A_{i-1}}}$ is the slope of the segment joining $v_{A_{i-1}}$ and $v_{A_i}$. Thus the simplicial inequalities in Proposition \ref{prop: wedge ineqs} can be interpreted as stating that, starting at the origin $v_{A_0}=v_\emptyset$, the points $v_{A_0},v_{A_1}, v_{A_2}, \dots, v_{A_k}$ form a broken line of ascending slopes. Similarly, the facial inequalities state that all points $v_C$ with $A_{i-1} \subset C \subset A_i$ lie on or above the segment connecting $v_{A_{i-1}}$ and $v_{A_i}$.

Now given a point $x\in P(q)$ construct a partition $\pi$ as follows. Draw the $2^n$ points $v_I$, take the convex hull to create a polygon $Q$, and look at the ``lower hull" of $Q$, which consists of the edges $Q$ that maximize a linear functional whose second component is nonpositive. This will form a broken line of ascending slopes connecting vertices $v_{A_0}, v_{A_1},\dots,v_{A_k}$. Because the $x_i$ are strictly positive we know $v_{A_0}$ will be the origin, and because of the increasing condition on the $z_I$ we know $A_k = [n]$. Now we claim that $A_{i-1}\subset A_i$ for all $i$. 

Suppose by way of contradiction that, ordered from left to right, $v_A$ and $v_B$ are consecutive vertices in the lower hull of $Q$, but that $A\not\subset B$. By the increasing condition on the $z_I$ we have $z_{A\cap B} < z_A < z_B < z_{A\cup B}$. Moreover, because $v_A$ and $v_B$ are vertices of the lower hull of $Q$ we know that the slope of the line segment connecting $v_{A\cap B}$ and $v_A$ is strictly less than the slope of the segment between $v_A$ and $v_B$, which is in turn strictly less than the slope of the segment between $v_B$ and $v_{A\cup B}$. Thus
\[
\frac{x_{A} - x_{A\cap B}}{z_A - z_{A\cap B}} < \frac{x_{A\cup B}-x_B}{z_{A\cup B}-z_B}.
\]
Notice that the numerators on both sides of this inequality are equal and positive, so we may rearrange terms to get
\[
z_A + z_B > z_{A\cup B} + z_{A\cap B},
\]
which violates the submodularity condition on the $z_I$. This is a contradiction. 

Now we may let $\pi = \pi_1|\cdots|\pi_k$ where $\pi_i = A_i \setminus A_{i-1}$. By construction $x$ satisfies the simplicial inequalities of $P^\pi(q)$, and by the increasing property of the $z_I$, $x$ satisfies the facial inequalities as well. Therefore $x \in P^\pi(q)$. 

Finally, note that if $x$ is generic then the partition $\pi$ is uniquely determined by the construction above. Therefore $P^{\pi}(q)$ and $P^{\pi'}(q)$ have disjoint interiors for $\pi \neq \pi'$.
\end{proof}

%
%Then because the $P^\pi(q)$ form a subdivision of $P(q)$, summing over all ordered partitions $\pi$ in $\mathcal{P}^n$ we can express the volume of the $q$-lifted polytope $P(q)$ in terms of these polynomials:
\begin{cor} The volume of the $q$-lifted polytope $P(q)$ is given by
\[
\vol_n(P(q)) = \sum_{\pi\in \mathcal{P}^n} 
\vol_n(P^\pi(q))
%z_\pi\vol_{n-k}(P_\pi) \, g_{c(\pi)}(q).
\]
\end{cor}

Motivated by this result, we now investigate the $\pi$-liftings $P^\pi(q)$ and their volumes in detail.
% the properties of the polynomials $g_{c(\pi)}(q)$.
%\ref{part:comp}. 

\begin{prop}
For $0 < q < 1$, the $\pi$-lifting $P^\pi(q)$ is combinatorially isomorphic to $\Delta_k \times P_\pi$. %the product of a $k$-dimensional simplex $\Delta$ and the face $P_\pi$. 
\end{prop}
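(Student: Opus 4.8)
The plan is to exhibit an explicit combinatorial equivalence by comparing the facet structures of $P^\pi(q)$ and $\Delta_k \times P_\pi$, using the hyperplane description from Proposition \ref{prop: wedge ineqs} and the Minkowski decomposition from Corollary \ref{prop: face decomp}. The key observation is that the inequalities defining $P^\pi(q)$ split cleanly into two independent groups: the \emph{simplicial inequalities} (S), which involve only the $k$ block-sums $x_{\pi_1}, \ldots, x_{\pi_k}$ (via the quantities $x_{\pi_i}/z_\pi^{\pi_i}$), and the \emph{facial inequalities} (F), which for each block $\pi_i$ constrain only the coordinates within $\pi_i$ and coincide exactly with the inequalities cutting out the factor $P_i$ of $P_\pi = P_1 \times \cdots \times P_k$ from Proposition \ref{prop:props}(3).

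First I would set up the change of coordinates: on the affine span of $P^\pi(q)$ (a full-dimensional polytope once we fix $\sum_i x_i = z_{[n]}$ is \emph{not} imposed here — it is genuinely $n$-dimensional in $\R^n$), introduce the $k$ ``slope'' coordinates $s_i := x_{\pi_i}/z_\pi^{\pi_i}$ together with, inside each block, the relative coordinates describing the point of $P_i$. In these coordinates the simplicial inequalities become precisely $q \le s_1 \le s_2 \le \cdots \le s_k \le 1$, which is (a linear image of) the standard $k$-simplex $\Delta_k$ — this is the content of calling them ``simplicial'' — while the facial inequalities become exactly the defining inequalities of $P_\pi$, with no coupling between the $s_i$'s and the within-block coordinates. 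Hence the inequality system factors as a product, giving a polytope affinely, hence combinatorially, isomorphic to $\Delta_k \times P_\pi$. The genericity/nondegeneracy needed (that $z_\pi^{\pi_i} > 0$ for all $i$, so the $s_i$ are well-defined and the simplex is nondegenerate) follows from Remark \ref{rem:shift}, since $P$ lies in the positive orthant and the $z_I$ are strictly increasing along chains; the condition $0 < q < 1$ guarantees the simplex $q \le s_1 \le \cdots \le s_k \le 1$ is full-dimensional (at $q=1$ it collapses to $P_\pi$, and at $q=0$ the combinatorics can degenerate, e.g.\ for matroid polytopes).

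Alternatively — and this is the cleaner route I would actually write — I would invoke Corollary \ref{prop: face decomp}, $P^\pi(q) = qP_\pi + (1-q)P^\pi(0)$, and show that $P^\pi(0)$ is combinatorially $\Delta_k \times P_\pi$ with the normal fan of $qP_\pi$ refining into it trivially for $0<q<1$; but the direct coordinate-factorization argument above is more transparent and avoids fan computations. The main obstacle I anticipate is bookkeeping: verifying carefully that the facial inequalities (F), indexed by all disjoint decompositions $\pi_i = C_i \sqcup D_i$, become, after the substitution, \emph{exactly} the supermodular/defining inequalities of the factor $P_i = P(\{z_I^i\}_{I\subseteq\pi_i})$ with $z_I^i = z_{\pi_1\cup\cdots\cup\pi_{i-1}\cup I} - z_{\pi_1\cup\cdots\cup\pi_{i-1}}$ — this is essentially a rescaling $x|_{\pi_i} \mapsto x|_{\pi_i} \cdot z_\pi^{\pi_i}/x_{\pi_i}$ applied blockwise, and one must check it respects all the inequalities and is invertible on the relevant region. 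This is precisely the normalization map $p \mapsto p|_{\pi_i}$ already used in the proof of Proposition \ref{prop: wedge ineqs}, so the ingredients are in hand; assembling them into the product statement is then routine.
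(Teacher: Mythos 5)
Your route is genuinely different from the paper's. The paper proves a stronger statement by induction: for any face $Q$ of $P^\pi(q)$ obtained by keeping all facial inequalities and $t$ of the simplicial ones, $Q \cong \Delta_{t-1}\times P_\pi$; the induction is on $\dim P_\pi + t$, and the inductive step identifies each facet of $Q$ as either a ``simplicial'' facet ($\cong \Delta_{t-2}\times P_\pi$) or a ``facial'' facet, which is shown to be the $\pi'$-lifting $P^{\pi'}(q)$ for the corresponding refinement $\pi'$ and hence $\cong \Delta_{t-1}\times P_{\pi'}$. Your direct parametrization avoids that induction and makes the product structure more transparent; the paper's argument, in exchange, yields the description of \emph{all} faces of $P^\pi(q)$, which is what it actually uses later.

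There is one step you must repair: the polytopes are \emph{not} affinely isomorphic, so the phrase ``affinely, hence combinatorially, isomorphic'' is wrong as stated. The map you are implicitly using is $x|_{\pi_i} = s_i\, p|_{\pi_i}$ with $s_i = x_{\pi_i}/z_\pi^{\pi_i}$ and $p|_{\pi_i} = x|_{\pi_i}\cdot z_\pi^{\pi_i}/x_{\pi_i}$, which is blockwise multiplicative, not linear; already for $k=1$ the polytope $P^{\pi}(q)=\conv(P\cup qP)$ is a frustum, combinatorially but not affinely a prism over $P$. The fix is straightforward but must be said: the map $\phi\colon \Delta\times P_\pi \to P^\pi(q)$, $\phi(s,p)|_{\pi_i}= s_i\,p|_{\pi_i}$, is a well-defined bijection precisely because $s_i \ge q>0$ (this is where $q>0$ enters), and under $\phi$ a simplicial inequality of $P^\pi(q)$ is tight exactly when the corresponding facet inequality of $\Delta$ is tight, while a facial inequality indexed by $\pi_i = C_i\sqcup D_i$ is tight exactly when the corresponding defining inequality of the factor $P_i$ is tight (the facial inequalities are homogeneous in $x|_{\pi_i}$, hence invariant under the rescaling by $s_i$). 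Since every face of a polytope is the locus where a subset of its defining inequalities is tight, $\phi$ carries faces bijectively to faces and therefore induces an isomorphism of face lattices. With that substitution for the affineness claim, your argument is complete.
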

\begin{proof} We prove the following stronger statement:

\begin{quote}
Suppose that, in the inequality description of $P^\pi(q)$ in Proposition \ref{prop: wedge ineqs}, we keep all the facial inequalities (\ref{facial}) and $t$ of the simplicial inequalities (\ref{simplicial}), and set the rest to be equalities. Then the resulting face $Q$ of $P^\pi(q)$ is combinatorially isomorphic to $\Delta_{t-1} \times P_\pi$.
\end{quote}

Notice that $t \geq 1$ since $q < 1$. First we prove the statement for $t=1$. Since $P$ is a generalized permutahedron, the $\pi$-maximal face $P_\pi = P_1 \times \cdots \times P_k$ for some polytopes $P_1 \subset \R^{\pi_1}, \ldots, P_k \subset \R^{\pi_k}$. If we set all but the $i$th facial inequality (\ref{facial}) to equalities, one easily checks that $Q = qP_1 \times \cdots \times  qP_{i-1} \times P_i \times \cdots \times P_k$. Since $q > 0$, $Q$ is combinatorially isomorphic to $P_\pi$.

Now we proceed by induction on $s:=\dim P_\pi+t$. The base case $s=1$ follows from the previous paragraph. Now consider a face $Q$ with $\dim P_\pi+t = s$. The facets of $Q$ are the following:

\noindent \textbf{Simplicial}: If $t=1$ then we already showed that $Q$ is isomorphic to $\Delta_0 \times P_\pi$. If $t \geq 1$ and we set any one of the remaining $t$ simplicial inequalities into an equality, the inductive hypothesis assures us that the result is isomorphic to $\Delta_{t-2} \times P_\pi$. 

\noindent \textbf{Facial}: Consider a facet of $Q$ given by an equation $x_{C_i}/z_{\pi'}^{C_i} = x_{D_i}/z_{\pi'}^{D_i}$. A vertex $v$ of $P \subset P^\pi(q)$ is on this facet if and only if $v \in P^{\pi'}(q)$. In turn, a ``$q$-lifting" of $v$ are on this facet if and only if $v$ is, since the lifting process applies a factor of $q$ to $v_{C_i}$  if and only if it applies it to $v_{D_i}$. Therefore this facet equals $P^{\pi'}(q)$, and is isomorphic to $\Delta_{t-1} \times P_{\pi'}$ by the inductive hypothesis.

From this it follows that $Q$ is combinatorially isomorphic to $\Delta_{t-1} \times P_{\pi}$, as we wished to show.
%
%
%
%First we define the facets of $P^\pi(q)$ of the form $\Delta'\times P_\pi$, where $\Delta'$ is a facet of $\Delta$. The inequality $q\leq \frac{x_{B_1}}{z^{B_1}}$ defines the facet of $\Delta\times P_\pi$ that contains all copies of $P_\pi$ that have been scaled by $q$ in the $x_{B_1}$ coordinates.  The inequality $\frac{x_{B_k}}{z^{B_k}}\leq 1$ defines the facet of $\Delta\times P_\pi$ that contains all copies of $P_\pi$ that have not been scaled by $q$ in the $x_{B_k}$ coordinates. Finally, the inequality $\frac{x_{B_i}}{z^{B_i}}\leq \frac{x_{B_{i+1}}}{z^{B_{i+1}}}$ defines the facet that contains all copies of $P_\pi$ for which the $B_i$ and $B_{i+1}$ coordinates have either both been scaled by $q$ or have both not been scaled by $q$. 
%
%Next we define the facets of $P^\pi(q)$ of the form $\Delta\times P_\mu$ where $P_\mu$ is a facet of $P_\pi$. We may assume that $\pi$ covers $\mu$ in the poset of ordered partitions, and particularly that $\mu$ is formed by splitting a block $B_i$ of $\pi$ into blocks $C_i$ and $D_i$ so that $\mu = B_1|\cdots|C_i|D_i|\cdots|B_k$. Then all points $x$ in $P_\mu$ will satisfy $x_{B_j} = z^{B_j}$ for $j=1,\dots,k$ as above, as well as the additional equations $x_{C_i} = z^{C_i}$ and $x_{D_i} = z^{D_i}$. Moreover, all other points on $P_\pi$ will satisfy the inequality $\frac{x_{C_i}}{z^{C_i}} \geq \frac{x_{D_i}}{z^{D_i}}$, and one can check that this inequality defines the hyperplane that contains the facet $\Delta \times P_\mu$.
\end{proof}

\begin{theorem}\label{thm: integral}
Let $P$ be a generalized permutahedron in $\mathbb{R}^n$. Let $\pi=\pi_1|\cdots|\pi_k$ be an ordered partition of $[n]$. Then the volume of the $\pi$-lifting $P^\pi(q)$ is a polynomial in $q$ given by
\begin{align*}
\vol_n(P^\pi(q)) =  \frac{z_\pi}{\sqrt{|\pi_1|\cdots|\pi_k|}} \vol_{n-k}(P_\pi)\int_{q}^{1} \int_{q}^{t_k} \cdots \int_{q}^{t_2} t_1^{|\pi_1|-1}\cdots t_k^{|\pi_k|-1} dt_1\cdots dt_k,
\end{align*}
where $z_\pi = z_\pi^{\pi_1}\cdots z_\pi^{\pi_k}$. 

\end{theorem}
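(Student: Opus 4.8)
The plan is to compute the volume of $P^\pi(q)$ by exploiting the Minkowski decomposition $P^\pi(q) = qP_\pi + (1-q)P^\pi(0)$ from Corollary \ref{prop: face decomp}, together with the product structure $P_\pi = P_1 \times \cdots \times P_k$ from Proposition \ref{prop:props}(3) and the combinatorial isomorphism $P^\pi(q) \cong \Delta_k \times P_\pi$. More concretely, I would set up the integral directly from the inequality description in Proposition \ref{prop: wedge ineqs}. Project $P^\pi(q)$ onto the $k$ ``slice'' coordinates $u_i := x_{\pi_i}/z_\pi^{\pi_i}$ for $i=1,\dots,k$; by the simplicial inequalities (\ref{simplicial}), the image is exactly the simplex $\{q \le u_1 \le u_2 \le \cdots \le u_k \le 1\}$, which accounts for the iterated integral $\int_q^1 \int_q^{t_k}\cdots\int_q^{t_2}$ with $t_i$ playing the role of $u_i$.

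The heart of the computation is to identify the fiber volume over a fixed slice vector $(u_1,\dots,u_k)$. The facial inequalities (\ref{facial}) only constrain, within each block $\pi_i$, the ratios $x_{C_i}/x_{\pi_i}$, and they do so exactly as the inequalities cutting out the factor $P_i$ of $P_\pi$ — indeed, setting all simplicial inequalities to equalities recovers a scaled copy of $P_\pi$, as shown in the proof of Proposition on $P^\pi(q)\cong\Delta_k\times P_\pi$. So the fiber over $(u_1,\dots,u_k)$ is, blockwise, a dilate of $P_i$ by the factor $u_i$ (in the coordinates $x|_{\pi_i}$, which live in the hyperplane $x_{\pi_i} = u_i z_\pi^{\pi_i}$ inside $\R^{\pi_i}$). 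Since $\dim P_i = |\pi_i|-1$, dilating the $i$-th block by $u_i$ scales its $(|\pi_i|-1)$-dimensional volume by $u_i^{|\pi_i|-1}$; multiplying over all blocks produces the integrand $t_1^{|\pi_1|-1}\cdots t_k^{|\pi_k|-1}$, and the constant factor from all the block volumes at $u_i = 1$ assembles into $\vol_{n-k}(P_\pi) = \prod_i \vol_{|\pi_i|-1}(P_i)$.

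The remaining work is bookkeeping of the Jacobian/normalization constants. Changing from the ambient coordinates $(x_1,\dots,x_n)$ — on the affine subspace $\sum x_i = z_{[n]}$, which is how we measure $\vol_n$ here — to the coordinates $(u_1,\dots,u_k)$ together with the within-block coordinates contributes: (i) a factor $z_\pi^{\pi_i}$ for each block from $du_i = dx_{\pi_i}/z_\pi^{\pi_i}$, giving the $z_\pi = z_\pi^{\pi_1}\cdots z_\pi^{\pi_k}$ in the numerator; and (ii) the lattice/metric normalization $1/\sqrt{|\pi_1|\cdots|\pi_k|}$, which arises because the direction $(1,\dots,1)$ restricted to a block $\pi_i$ has length $\sqrt{|\pi_i|}$, so splitting off the coordinate $x_{\pi_i}$ from the $|\pi_i|-1$-dimensional complement within $\R^{\pi_i}$ costs a factor $\sqrt{|\pi_i|}$ — exactly the same normalization that appears in the standard formula $\vol_{n-1}(\Delta_{[n]}) = \sqrt{n}/(n-1)!$. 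I would verify this constant by checking the base case where $P$ is a point (so $P_\pi$ is a point, all $|\pi_i| = 1$, and the formula must reduce to $\vol$ of the appropriate $q$-lifted simplex), and the case $k=1$ (where the formula should read $\vol_n(P^\pi(q)) = \frac{z_\pi^{[n]}}{\sqrt n}\vol_{n-1}(P)\int_q^1 t^{n-1}dt$, matching the dilation picture directly).

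The main obstacle is getting step (ii) — the $1/\sqrt{|\pi_1|\cdots|\pi_k|}$ normalization — correct and rigorously justified: one must be careful about which volume form is being used on the codimension-one slice $\sum x_i = z_{[n]}$ versus on the lower-dimensional fibers $P_i$, and how the orthogonal decomposition $\R^n = \bigoplus_i \R^{\pi_i}$ interacts with splitting off the ``sum'' direction in each summand. Everything else is a routine Fubini argument once the fiber structure is identified.
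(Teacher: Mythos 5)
Your proposal is correct and is essentially the paper's own argument: the paper applies Federer's coarea formula to the same linear map $x \mapsto (x_{\pi_1}/z_\pi^{\pi_1},\ldots,x_{\pi_k}/z_\pi^{\pi_k})$ onto the simplex $\{q \le y_1 \le \cdots \le y_k \le 1\}$, identifies the fibers as $(p_{\pi_1}P_1)\times\cdots\times(p_{\pi_k}P_k)$ using the product structure of $P_\pi$, and absorbs all the normalization into the statement that the $k$-Jacobian has norm $\sqrt{|\pi_1|\cdots|\pi_k|}/z_\pi$. Your accounting of where the $\sqrt{|\pi_i|}$ factors come from is exactly the content of that Jacobian computation, so there is no gap.
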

\begin{proof}
We use Federer's coarea formula \cite{Fe}. Consider the linear transformation
\begin{eqnarray*}
f: \R^n & \rightarrow & \R^k \\
x & \mapsto & \left(\frac{x_{\pi_1}}{z_\pi^{\pi_1}}, \ldots, \frac{x_{\pi_k}}{z_\pi^{\pi_k}}\right) 
\end{eqnarray*}
which maps $P^\pi(q)$ onto the $k$-simplex $\Delta:= \{y \in \R^k \, : \, q \leq y_1 \leq \cdots \leq y_k \leq 1\}$. One easily checks that the $k$-Jacobian of this map has norm ${\sqrt{|\pi_1|\cdots|\pi_k|}}/{z_\pi}$.

By Proposition \ref{prop:props}, the $\pi$-maximal face is of the form $P_\pi = P_1 \times \cdots \times P_k$ for some polytopes $P_1 \subset \R^{\pi_1}, \ldots, P_k \subset \R^{\pi_k}$. It is easy to see that
\[
f^{-1}(p)=(p_{\pi_1} \cdot P_1) \times \cdots \times  (p_{\pi_k} \cdot P_k)
\]
for any $p \in \Delta$. Therefore this fiber is combinatorially isomorphic to $P_{\pi}$ and
\[
\vol_{n-k}(f^{-1}(p)) =  p_{\pi_1}^{|\pi_1|-1}\cdots p_{\pi_k}^{|\pi_k|-1}\vol_{n-k}(P_\pi).
\]
The result follows by integrating this over $p \in \Delta$ and using the coarea formula.
\end{proof}

\begin{figure}[h]
\centering
\includegraphics[scale=.0815]{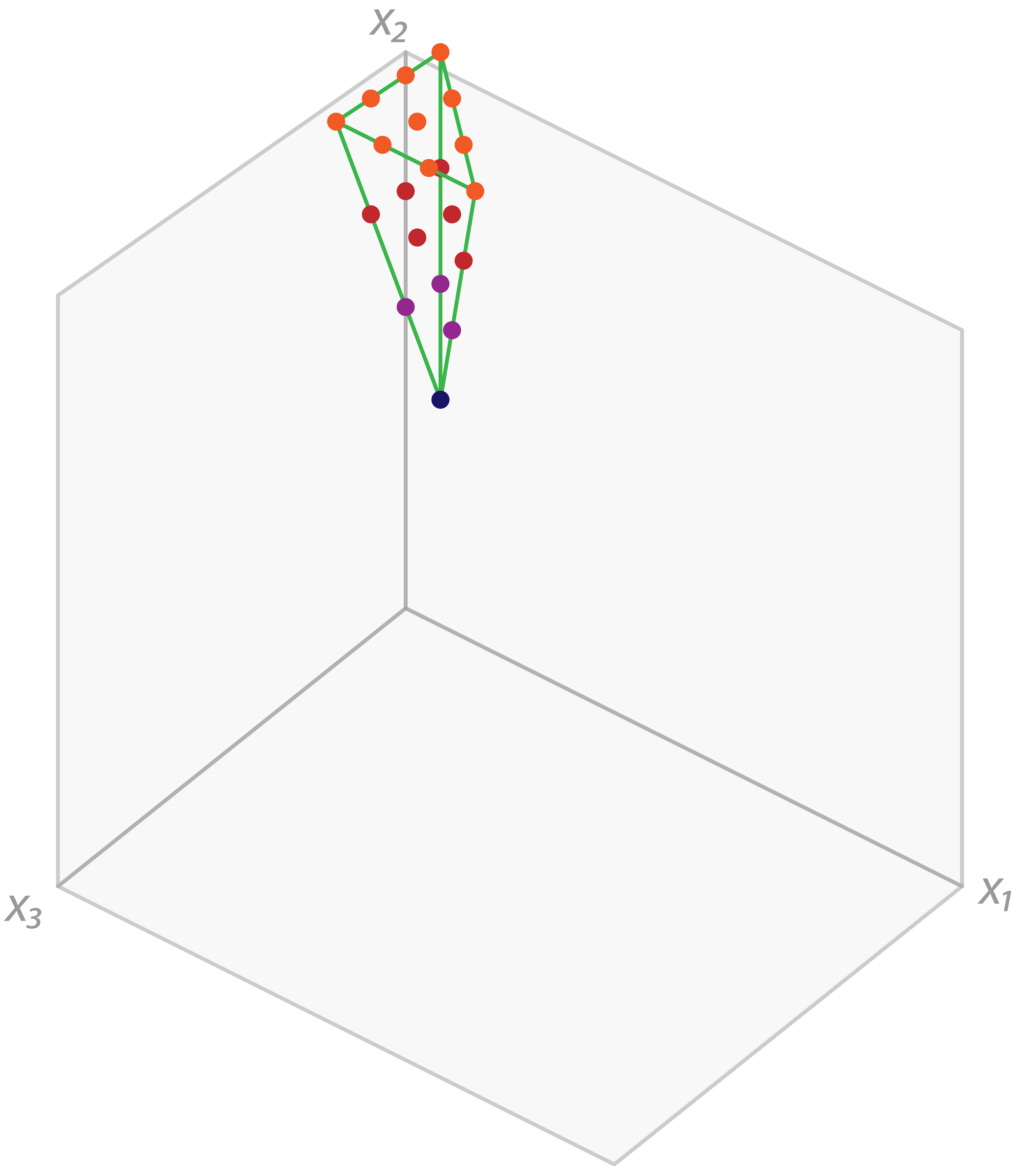} \includegraphics[scale=.0815]{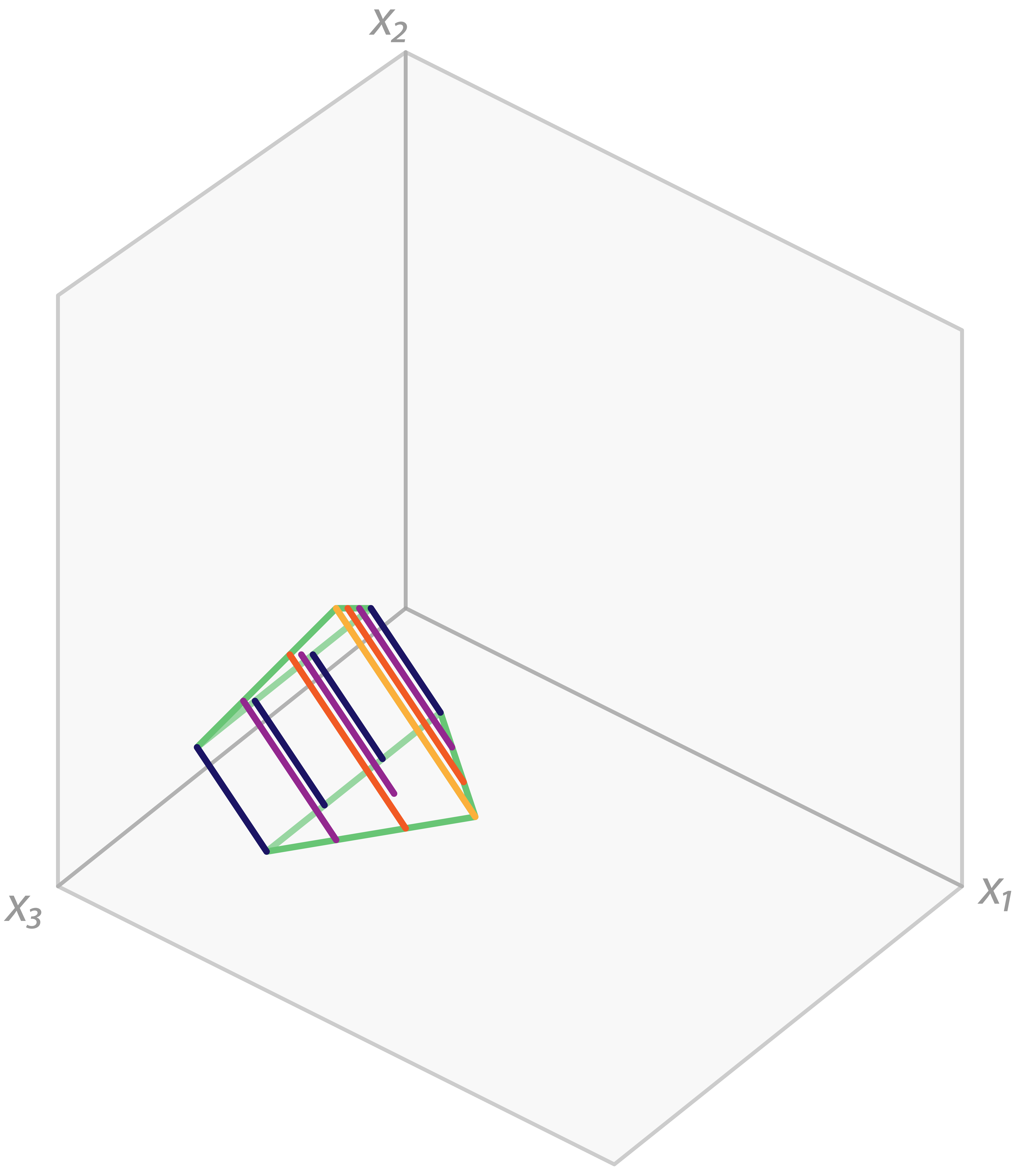} \includegraphics[scale=.0815]{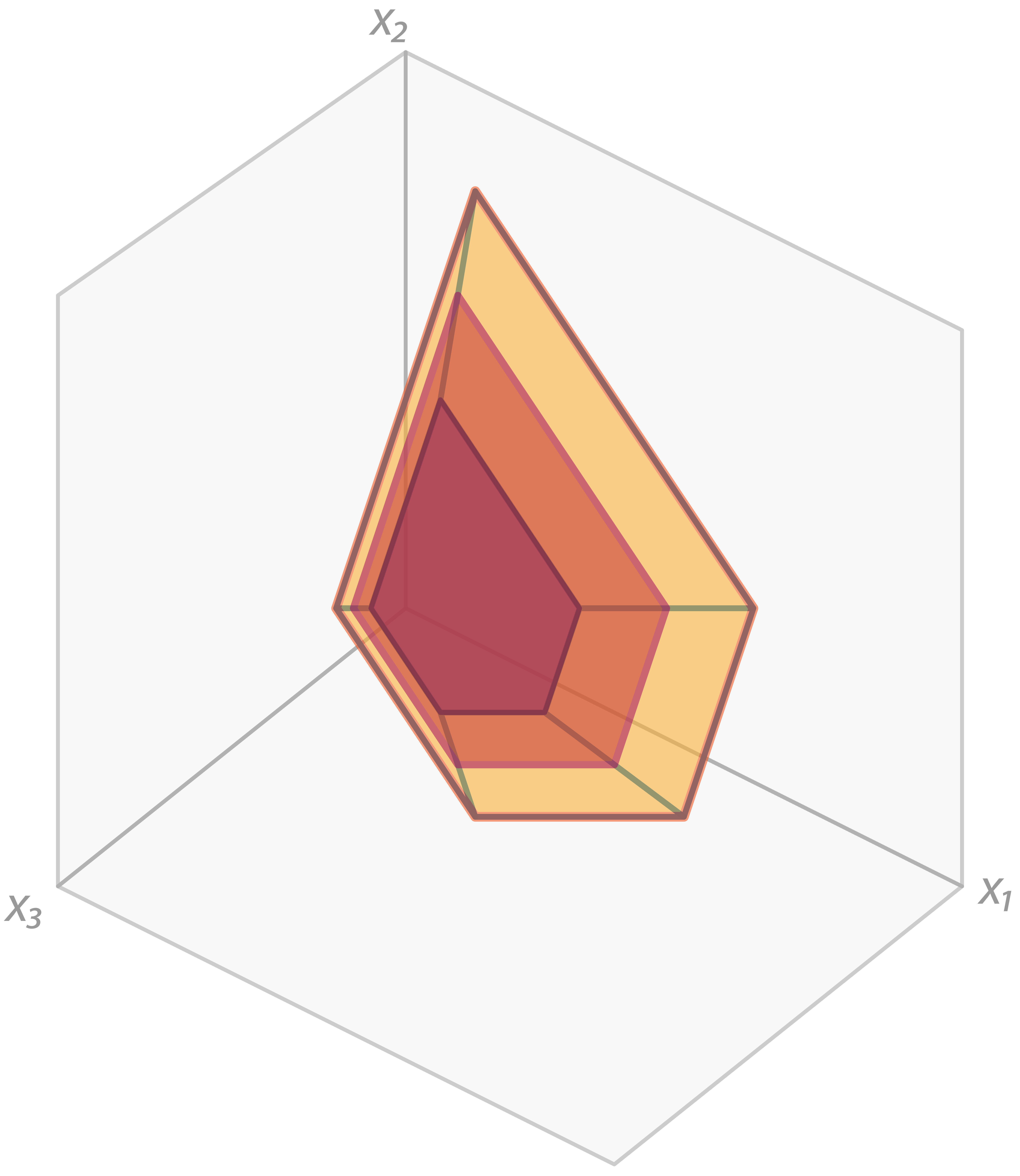} 
\caption{The $\pi$-liftings of the associahedron $\mathcal{K}(4)$, $\mathcal{K}(4)^{1|3|2}(q)$, $\mathcal{K}(4)^{12|3}(q)$, and $\mathcal{K}(4)^{123}(q)$ of Figure \ref{fig: face liftings}, together with some of the fibers that we are integrating to obtain their volume. The fibers are points, segments, and pentagons, respectively.}\label{fig: fibers}
\end{figure}

%\begin{figure}[h]
%\centering
%\includegraphics[scale=.35]{subdiv2.pdf}\includegraphics[scale=.35]{subdivint.pdf}
%\caption{The $\pi$-lifting 
%%$U^{12|3}_q(\mathcal{K}(4))$ 
%$\mathcal{K}(4)^{12|3}(q)$ 
%(left) along with its simplicial cross-section (right) shown in blue. INSTEAD OF CROSS-SECTIONS WE SHOULD SHOW FIBERS.}\label{fig: cross section}
%\end{figure} 

Observe that the above integral evaluates to a polynomial in $q$ and depends only on the sizes of the blocks of $\pi$. The sequence of these block sizes can be thought of as a composition $c(\pi)$ of the integer $n$. Let us call this polynomial $g_{c(\pi)}(q)$. This polynomial will be the subject of study of Part 2.

\bigskip
\bigskip
\bigskip

\newpage

\noindent
%\begin{huge}
\begin{Large}
%\part
%\textsf{PART 2. Composition polynomials.}%\label{part:comp}
\textsf{PART 2. COMPOSITION POLYNOMIALS.}%\label{part:comp}
\end{Large}
%\end{huge}
\bigskip

%\section{\textsf{Introduction.}}

In Section \ref{section: composition polynomials}, motivated by the geometric considerations of Part 1, we introduce the \emph{composition polynomial} $g_c(q)$ of an ordered composition  $c = (c_1,\dots,c_k)$ of $n$ and the  \emph{reduced composition polynomial} $f_c(q) = (1-q)^{-k}g_c(q)$. We present our main results, Theorems \ref{theorem: g closed form} -- \ref{theorem: order polytope}

In Section \ref{section:formulas} we derive an explicit formula  (Theorem \ref{theorem: g closed form}) and various properties (Theorem \ref{theorem: composition polynomials}) of composition polynomials, and we prove the positivity of $f_c(q)$. (Theorem \ref{theorem: positivity})
%
% in terms of Vandermonde determinants, and a recursive formula that is sometimes more useful for computations. We use this to prove Theorem \ref{theorem: composition polynomials}. In particular we prove that the coefficients of $f_c(q)$ are positive. We conjecture that they are also unimodal.
% whether the coefficients of $f_c(q)$ are also unimodal, or even log-concave.
In Section \ref{section:interpolation} we show that composition polynomials arise very naturally in the polynomial interpolation of the exponential function $h(x) = q^x$. (Theorem \ref{theorem: interpolation})
In Section \ref{section:orderpolytopes} we establish a connection between composition polynomials and Stanley's order polytopes. (Theorem \ref{theorem: order polytope}) We use this to interpret $g_c(q)$ as a generating function for counting linear extensions of a poset $P_c$.
We conclude by suggesting some questions in Section \ref{section: open}.

%In Section \ref{section: face q-liftings} we defined $g_{c(\pi)}(q)$. This turned out to be a constant multiple of the volume of the $(\pi,q)$-lifting $P^\pi(q)$. In Theorem \ref{theorem: composition polynomials} we detail several other properties of (reduced) composition polynomials, some of which are illustrated in the following examples:
%\begin{itemize}
%\item $f_{(1,1,1,1)}(q) = \frac{1}{24}$.
%\item $f_{(2,2,2,2)}(q) = \frac{1}{384}(1+q)^4$.
%\item $f_{(1,2,2)}(q) = \frac{1}{120}(8 + 9q + 3q^2)$.
%\item $f_{(2,2,1)}(q) = \frac{1}{120}(3 + 9q + 8q^2)$.
%\item $f_{(5,3)}(q) = \frac{1}{120}(5+10q+15q^2+12q^3+9q^4+6q^5+3q^6)$.
%\item If $a$ and $b$ are relatively prime, \\
%$f_{(a,b)}(q) = \frac{1}{ab(a+b)}(b+2bq+\cdots+(a-2)bq^{a-3}+(a-1)bq^{a-2}+ abq^{a-1}+ \\ + a(b-1)q^a+ a(b-2)q^{a+1} + \cdots+2aq^{a+b-3} + aq^{a+b-2})$ % for $a$ and $b$ relatively prime.
%\end{itemize}

\section{\textsf{Composition polynomials.}}\label{section: composition polynomials}

\begin{deff}
A \emph{composition} $c=(c_1,\dots,c_k)$ is a finite ordered tuple of positive integers. We call the $c_i$ the \emph{parts} of $c$, and the sum $c_1+\cdots+c_k$ the \emph{size} of $c$. If $c = (c_1,\dots,c_k)$ has size $n$, we say that $c$ is a composition of $n$ into $k$ parts. The \emph{reverse} of the composition $c$ is defined as $\bar{c} = (c_k,\dots,c_1)$. 
%Define the \emph{truncated compositions} $c^L := (c_2,\dots,c_k)$ and $c^R:=(c_1,\dots,c_{k-1})$. For $m \in \{1,\dots, k-1\}$ we define the \emph{merged composition} $c^m$ as the composition formed by combining the parts $c_m$ and $c_{m+1}$ into a single part:
%\[
%c^m := (c_1,\dots, c_{m-1},c_m+c_{m+1},c_{m+2},\dots,c_k).
%\]
%For an ordered set partition $\pi = B_1|\cdots|B_k$, we define the \emph{induced composition} $c(\pi) := (|B_1|,\dots,|B_k|)$. 
\end{deff}

\begin{deff}
For a composition $c=(c_1,\dots,c_k)$ we write $\mathbf{t^{c-1}}: = t_1^{c_1-1}\cdots t_k^{c_k-1}$, where $t = (t_1,\dots,t_k)$. The \emph{composition polynomial} $g_c(q)$ is
\[
g_c(q) := \int_{q}^{1} \int_{q}^{t_k} \cdots \int_{q}^{t_2} \mathbf{t^{c-1}} dt_1\cdots dt_k. 
\]
The \emph{reduced composition polynomial} of $c$ is $f_c(q) = g_c(q)/(1-q)^k$. We will soon see in Theorem \ref{theorem: composition polynomials} that it is, indeed, a polynomial.
\end{deff}

It is clear that $g_c(q)$ is indeed a polynomial in $q$ of degree $n$. It is less clear that $f_c(q)$ is also a polynomial, but we will prove it in Theorem \ref{theorem: composition polynomials}. Below are some examples of composition polynomials which hint at some of their general properties.
 
\begin{itemize}
\item $g_{(1,1,1,1)}(q) = \frac{1}{24}(1-q)^4$.
\item $g_{(2,2,2,2)}(q) = \frac{1}{384}(1-q)^4(1+q)^4$.
\item $g_{(1,2,2)}(q) = \frac{1}{120}(1-q)^3(8 + 9q + 3q^2)$.
\item $g_{(2,2,1)}(q) = \frac{1}{120}(1-q)^3(3 + 9q + 8q^2)$.
\item $g_{(3,5)}(q) = \frac{1}{120}(1-q)^2(5+10q+15q^2+12q^3+9q^4+6q^5+3q^6)$.
\item $g_{(a,b)}(q) = \frac{1}{ab(a+b)}(1-q)^2(b+2bq+\cdots+(a-2)bq^{a-3}+(a-1)bq^{a-2}+ abq^{a-1}+ \\ 
+ a(b-1)q^a+ a(b-2)q^{a+1} + \cdots+2aq^{a+b-3} + aq^{a+b-2})$
% for 
%$g_{(a,b)}(q) = \frac1{a(a+b)}(1-q^{a+b}) - \frac{q^a}{ab}(1-q^b) \\
% = \frac{1}{ab(a+b)}(1-q)^2(b+2bq+\cdots+(a-2)bq^{a-3}+(a-1)bq^{a-2}+ abq^{a-1}+ \\ 
%\qquad + a(b-1)q^a+ a(b-2)q^{a+1} + \cdots+2aq^{a+b-3} + aq^{a+b-2})$. % for $a$ and $b$ relatively prime.
\end{itemize}

For instance, the reader can check that $g_{(a,b)}(q) = \frac1{a(a+b)}(1-q^{a+b}) - \frac{q^a}{ab}(1-q^b)$, from which the last formula follows.

%\begin{itemize}
%\item $f_{(1,1,1,1)}(q) = \frac{1}{24}$.
%\item $f_{(2,2,2,2)}(q) = \frac{1}{384}(1+q)^4$.
%\item $f_{(1,2,2)}(q) = \frac{1}{120}(8 + 9q + 3q^2)$.
%\item $f_{(2,2,1)}(q) = \frac{1}{120}(3 + 9q + 8q^2)$.
%\item $f_{(5,3)}(q) = \frac{1}{120}(5+10q+15q^2+12q^3+9q^4+6q^5+3q^6)$.
%\item If $a$ and $b$ are relatively prime, \\
%$f_{(a,b)}(q) = \frac{1}{ab(a+b)}(b+2bq+\cdots+(a-2)bq^{a-3}+(a-1)bq^{a-2}+ abq^{a-1}+ \\ + a(b-1)q^a+ a(b-2)q^{a+1} + \cdots+2aq^{a+b-3} + aq^{a+b-2})$ % for $a$ and $b$ relatively prime.
%\end{itemize}

%These examples suggest several properties of composition polynomials, which we collect in the main theorem of Part 2:
%The main goal for this section is to prove the following theorem about composition polynomials.

Our main results in Part 2 are the following:

\begin{theorem}\label{theorem: g closed form}
If $\beta_i = c_1 + \cdots + c_i$ for $0 \leq i \leq k$, we have
\[
g_c(q)=\sum_{i=0}^k \frac{q^{\beta_i}}{\prod_{j \neq i}(\beta_j - \beta_i)}.
\]
\end{theorem}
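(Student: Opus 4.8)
The plan is to evaluate the iterated integral defining $g_c(q)$ directly, proceeding by induction on the number of parts $k$, and to recognize the resulting expression as a partial-fraction-type sum over the ``cumulative sizes'' $\beta_i$. Write $\beta_i = c_1 + \cdots + c_i$, so $\beta_0 = 0$ and $\beta_k = n$, and note that the integrand $\mathbf{t^{c-1}}$ together with the nested limits makes $g_c$ naturally a function built by repeatedly applying the operation $G(t) \mapsto \int_q^t s^{c_j - 1} G(s)\, ds$. The key observation is that each such operation, applied to a linear combination of monomials $q^a t^b$, again produces a linear combination of the same shape, so one expects $g_c(q)$ to be expressible as $\sum_i a_i(c)\, q^{\beta_i}$ for suitable rational coefficients $a_i(c)$, and the content of the theorem is the identification $a_i(c) = \prod_{j \neq i}(\beta_j - \beta_i)^{-1}$.

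First I would set up the induction. For $k = 1$ we have $g_{(c_1)}(q) = \int_q^1 t_1^{c_1 - 1}\, dt_1 = \frac{1}{c_1}(1 - q^{c_1})$, which matches the claimed formula since $\beta_0 = 0$, $\beta_1 = c_1$, giving $\frac{q^0}{\beta_1 - \beta_0} + \frac{q^{\beta_1}}{\beta_0 - \beta_1} = \frac{1}{c_1}(1 - q^{c_1})$. For the inductive step, let $c = (c_1, \ldots, c_k)$ and let $c' = (c_2, \ldots, c_k)$ with cumulative sums $\beta'_i = \beta_{i+1} - c_1$ (so the $\beta'$ of $c'$ are the $\beta_i$ of $c$ shifted). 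The outermost integration variable in $g_c$ is $t_1$, integrated from $q$ to $t_2$, so I would write
\[
g_c(q) = \int_q^1 \!\!\int_q^{t_k} \!\!\cdots \int_q^{t_3} \left( \int_q^{t_2} t_1^{c_1 - 1}\, dt_1 \right) t_2^{c_2 - 1} \cdots t_k^{c_k - 1}\, dt_2 \cdots dt_k = \frac{1}{c_1}\Bigl( g_{c'}(q)\big|_{\text{with integrand scaled}} \Bigr),
\]
more precisely $\int_q^{t_2} t_1^{c_1-1} dt_1 = \frac{1}{c_1}(t_2^{c_1} - q^{c_1})$, which splits $g_c$ into two pieces: one where the integrand of the remaining $(k-1)$-fold integral is $t_2^{c_1 + c_2 - 1} t_3^{c_3 - 1} \cdots$ (i.e. $g$ of the composition $(c_1 + c_2, c_3, \ldots, c_k)$), and one where it is $-q^{c_1} t_2^{c_2-1}\cdots t_k^{c_k-1}$ (i.e. $-q^{c_1}$ times $g_{c'}(q)$). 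Applying the inductive hypothesis to both $(k-1)$-part compositions and collecting the coefficient of each $q^{\beta_i}$ then reduces the theorem to a purely algebraic identity among the partial-fraction coefficients — essentially the statement that merging the first two nodes or peeling off the first node are compatible with the product formula $\prod_{j\neq i}(\beta_j - \beta_i)^{-1}$.

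The main obstacle I anticipate is the bookkeeping in that final algebraic reconciliation: the two pieces live over \emph{different} index sets ($\{\beta_0, \beta_2, \ldots, \beta_k\}$ for the merged composition, missing $\beta_1$; and a shifted copy of $\{\beta_1, \ldots, \beta_k\}$ for the peeled piece), and one must check that after dividing by $c_1 = \beta_1 - \beta_0$ and adding, the coefficient of $q^{\beta_1}$ (which appears only in the second piece) comes out to $\prod_{j\neq 1}(\beta_j - \beta_1)^{-1}$, while the coefficients of $q^{\beta_i}$ for $i \neq 1$ receive contributions from both pieces that must telescope correctly. A cleaner alternative, which I would try first to avoid the casework, is to prove the formula by a residue/contour argument: recognize $\sum_i q^{\beta_i}/\prod_{j\neq i}(\beta_j - \beta_i)$ as a divided difference of the function $x \mapsto q^x$ at the nodes $\beta_0, \ldots, \beta_k$ — indeed this is exactly the Lagrange-interpolation remainder formula — and separately show that the iterated integral $g_c(q)$ computes the same divided difference, using the Hermite–Genocchi formula
\[
[\beta_0, \ldots, \beta_k]\, h = \int_{\sigma_k} h^{(k)}\bigl(\beta_0 + u_1(\beta_1 - \beta_0) + \cdots + u_k(\beta_k - \beta_{k-1})\bigr)\, du,
\]
which after the change of variables $t_i = q^{(\text{partial sum})}$, or more directly by a change of variables matching the simplex $\sigma_k$ to the region $q \le t_1 \le \cdots \le t_k \le 1$, turns into the defining integral for $g_c(q)$. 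This connects directly to Section~\ref{section:interpolation}, where (as the introduction promises) $g_c(q)$ is shown to govern the polynomial interpolation of $h(x) = q^x$; in fact Theorem~\ref{theorem: g closed form} can be read as the ``closed form'' half of that interpolation story, so I would present the divided-difference viewpoint as the conceptual proof and relegate the induction to a remark or an alternative argument.
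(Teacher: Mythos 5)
Your argument is correct, but it is organized differently from the paper's. The paper also inducts on $k$ with the same base case, but it peels off the \emph{outermost} variable $t_k$: it applies the inductive hypothesis to $c^R=(c_1,\dots,c_{k-1})$, uses a homogeneity observation to replace the upper limit $1$ by $t_k$, integrates once more, and then consolidates the coefficient of $q^{\beta_k}$ via a Vandermonde determinant identity (replacing the last column of $(\beta)$ by a column of $1$s gives determinant zero). You instead peel off the \emph{innermost} variable $t_1$, which is exactly the paper's Lemma \ref{lemma: baby recursion}, $g_c = \tfrac{1}{c_1}g_{c^1} - \tfrac{q^{c_1}}{c_1}g_{c^L}$, and then reconcile coefficients over the two index sets. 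The ``telescoping'' you worry about is in fact a one-line partial-fraction identity: for $i\geq 2$ the two contributions combine via $\tfrac{1}{\beta_0-\beta_i}-\tfrac{1}{\beta_1-\beta_i}=\tfrac{c_1}{(\beta_0-\beta_i)(\beta_1-\beta_i)}$, and the cases $i=0,1$ are immediate since $c_1=\beta_1-\beta_0$; so your route avoids the determinant machinery entirely at the cost of this small case analysis. Your proposed divided-difference alternative is also sound and more conceptual: the right-hand side is the divided difference $[\beta_0,\dots,\beta_k]\,q^x$, and the substitution $t_i=q^{u_i}$ turns the Hermite--Genocchi simplex integral into the defining integral for $g_c(q)$. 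That viewpoint subsumes Theorem \ref{theorem: interpolation} and speaks directly to the question raised in Section \ref{section: open} about a conceptual explanation of the interpolation connection, so it would be worth recording even though the paper proves the interpolation statement separately via Cramer's rule.
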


\begin{theorem}\label{theorem: composition polynomials}
Let $c = (c_1,\dots,c_k)$ be a composition of $n$. Then: % the following are true:
\begin{enumerate}
\item $g_{\bar{c}}(q) = q^n g_c(1/q)$.
\item $g_{m c}(q) = \frac{1}{m^k}g_c(q^{m})$ %and $f_{\alpha c}(q) = \frac{1}{\alpha^k}(1+q+\cdots+q^{\alpha-1})^k f_c(q^{\alpha})$ 
for any positive integer $m$.
\item $g_c(q) = (1-q)^kf_c(q)$ for a polynomial $f_c(q)$ of degree $n-k$ with 
%$\text{deg} \, f_c(q)=n-k$ and
 $f_c(1)\neq 0$.
\item $f_c(1)=1/{k!}$.
\end{enumerate}
\end{theorem}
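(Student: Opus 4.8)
The plan is to extract $f_c(1)$ directly from the integral definition of $g_c$ by a squeeze estimate as $q \to 1^-$, invoking part (3) only to know that $f_c$ is a polynomial (hence continuous at $q=1$), so that the limiting value is the actual value $f_c(1)$.

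First I would identify the domain of integration: $g_c(q) = \int_{\Delta_q}\mathbf{t^{c-1}}\,dt$, where $\Delta_q = \{t \in \R^k : q \le t_1 \le \cdots \le t_k \le 1\}$ is the order simplex obtained by slicing the cube $[q,1]^k$ along the hyperplanes $t_i = t_j$. Since the $k!$ orderings of the coordinates partition that cube into congruent copies of $\Delta_q$, we get $\vol_k(\Delta_q) = (1-q)^k/k!$. (In particular this already recovers $g_{(1,\dots,1)}(q) = (1-q)^k/k!$.)

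Next comes the squeeze. Every part satisfies $c_i \ge 1$, so on $\Delta_q$ (where $0 < q \le t_i \le 1$) we have $q^{c_i-1} \le t_i^{c_i-1} \le 1$; multiplying over $i$ and using $\sum_i(c_i-1) = n-k$ gives $q^{n-k} \le \mathbf{t^{c-1}} \le 1$ on $\Delta_q$. Integrating over $\Delta_q$ then yields
\[
\frac{q^{n-k}(1-q)^k}{k!} \;\le\; g_c(q) \;\le\; \frac{(1-q)^k}{k!} \qquad \text{for } 0<q<1.
\]
Dividing by $(1-q)^k$ leaves $q^{n-k}/k! \le f_c(q) \le 1/k!$ on $(0,1)$, and letting $q \to 1^-$ (legitimate since $f_c$ is a polynomial by part (3)) forces $f_c(1) = 1/k!$. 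This also refines the assertion $f_c(1)\neq 0$ of part (3).

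There is no genuine obstacle here; the only points requiring care are the volume of the order simplex $\Delta_q$ and the (harmless) appeal to part (3). As an alternative that avoids the integral, one can start from the closed form of Theorem \ref{theorem: g closed form}: substituting $q = 1-u$, expanding $(1-u)^{\beta_i}$ by the binomial theorem, and interchanging the two sums, the coefficient of $u^m$ is $\pm$ the divided difference of $x \mapsto \binom{x}{m}$ at the $k+1$ distinct nodes $\beta_0 < \cdots < \beta_k$. This vanishes for $m < k$ because the polynomial has degree too small, and (up to sign) equals the leading coefficient $1/k!$ when $m=k$, so $g_c(1-u) = u^k/k! + O(u^{k+1})$ and hence $f_c(1) = 1/k!$ again.
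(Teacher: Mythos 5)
Your squeeze argument for part~(4) is correct: $\Delta_q$ is one of the $k!$ congruent order simplices tiling the cube $[q,1]^k$, so $\vol_k(\Delta_q)=(1-q)^k/k!$; the pointwise bounds $q^{n-k}\le \mathbf{t^{c-1}}\le 1$ on $\Delta_q$ give $q^{n-k}/k!\le f_c(q)\le 1/k!$ on $(0,1)$; and continuity of the polynomial $f_c$ forces $f_c(1)=1/k!$. This is genuinely different from (and more direct than) the paper's route, which proves parts (3), (4) and positivity simultaneously by induction on the size of $c$ for fixed $k$, using the recursion $g_{c'^m}=\frac{\beta'_m}{n}g_{c'^R}+(1-\frac{\beta'_m}{n})q^{c'_1}g_{c'^L}$ of Corollary~\ref{cor: merged recursion} with base case $g_{(1,\dots,1)}=(1-q)^k/k!$; the convex combination there is what transports $f(1)=1/k!$ through the induction. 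Your argument buys part (4) without any recursion, but note it cannot be extended to positivity of all coefficients of $f_c$, which is the main payoff of the paper's inductive setup.

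The genuine gap is coverage: the statement under review is the full four-part theorem, and your proposal proves only part (4), explicitly taking part (3) as an input and saying nothing about parts (1) and (2). Parts (1) and (2) follow in one line from Theorem~\ref{theorem: g closed form} (the partial sums of $\bar c$ are $n-\beta_{k-i}$ with $[\bar\beta_i]=[\beta_{k-i}]$, and those of $mc$ are $m\beta_i$ with $[m\beta_i]=m^k[\beta_i]$), and part (3) needs an actual argument that $(1-q)^k$ divides $g_c(q)$ with nonzero quotient at $q=1$. Ironically, your ``alternative'' divided-difference computation already does most of this: writing $g_c(q)=(-1)^k\,e_q[\beta_0,\dots,\beta_k]$ and expanding $(1-u)^{\beta_i}$ shows the coefficients of $u^0,\dots,u^{k-1}$ in $g_c(1-u)$ vanish (divided differences of order $k$ kill polynomials of degree $<k$), which \emph{is} the divisibility claim of part (3), and the coefficient of $u^k$ is $1/k!\neq 0$. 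Had you promoted that remark from an aside to the main argument, you would have an honest, non-circular proof of parts (3) and (4) together (the degree statement $\deg f_c=n-k$ then follows since the closed form gives $\deg g_c=n$). As written, the circular dependence of the squeeze on part (3), plus the omission of parts (1)--(3), leaves the proof incomplete.
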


\begin{theorem}\label{theorem: positivity}
The coefficients of the reduced composition polynomial $f_c(q)$ are positive.
\end{theorem}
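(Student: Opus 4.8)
The plan is to prove positivity of the coefficients of $f_c(q)$ by exhibiting a combinatorial model that manifestly produces nonnegative (in fact positive) integers, and then checking that this model computes $f_c$. The natural candidate, suggested by the later part of the paper (the promised connection to order polytopes and linear extensions of a poset $P_c$), is a lattice-point / linear-extension count. Concretely, I would first rewrite $g_c(q)$ as a sum over monotone lattice paths or over chains. Starting from the iterated integral $g_c(q) = \int_q^1 \int_q^{t_k} \cdots \int_q^{t_2} t_1^{c_1-1}\cdots t_k^{c_k-1}\, dt_1 \cdots dt_k$, I would substitute $t_i = q + (1-q)u_i$, so that the domain becomes the simplex $0 \le u_1 \le \cdots \le u_k \le 1$ and each factor $dt_i$ contributes $(1-q)$, pulling out exactly the factor $(1-q)^k$ and leaving
\[
f_c(q) = \int_{0 \le u_1 \le \cdots \le u_k \le 1} \prod_{i=1}^k \bigl(q + (1-q)u_i\bigr)^{c_i - 1}\, du_1 \cdots du_k.
\]
This already shows $f_c$ is a polynomial with $f_c(1) = \mathrm{vol}(\Delta_{k}) = 1/k!$, matching Theorem~\ref{theorem: composition polynomials}, and it reduces positivity to showing that the above integral, expanded in powers of $q$, has positive coefficients.

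Next I would expand $(q + (1-q)u_i)^{c_i-1}$ by the binomial theorem and write $q + (1-q)u_i = u_i + q(1-u_i)$, so that
\[
\bigl(u_i + q(1-u_i)\bigr)^{c_i-1} = \sum_{a_i=0}^{c_i-1} \binom{c_i-1}{a_i} q^{a_i} u_i^{c_i-1-a_i}(1-u_i)^{a_i}.
\]
Multiplying out over $i$ and integrating term by term, the coefficient of $q^m$ in $f_c(q)$ becomes a sum, over all $(a_1,\dots,a_k)$ with $\sum a_i = m$ and $0 \le a_i \le c_i-1$, of $\prod_i \binom{c_i-1}{a_i}$ times the integral $\int_{u_1 \le \cdots \le u_k} \prod_i u_i^{c_i-1-a_i}(1-u_i)^{a_i}\,du$. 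Each such integral is the volume of a nonempty region (the order simplex intersected with the cube), hence strictly positive, and the binomial coefficients and the set of admissible $(a_i)$ are nonnegative, with at least one admissible tuple for every $0 \le m \le n-k$. So every coefficient of $f_c(q)$ is positive. (To pin down positivity rather than mere nonnegativity for every $m$ in the range, I note that $a_i = $ anything between $0$ and $c_i - 1$ is allowed, so the possible values of $m = \sum a_i$ range over all of $\{0,1,\dots,\sum(c_i-1)\} = \{0,\dots,n-k\}$, and for each such $m$ at least one tuple contributes a strictly positive term.)

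Alternatively — and this is the version I would actually write, since it also sets up Theorem~\ref{theorem: order polytope} — I would interpret the integral $\int_{0\le u_1 \le \cdots \le u_k \le 1}\prod_i (q+(1-q)u_i)^{c_i-1}du$ as (a normalization of) the generating function counting lattice points, or linear extensions, of the poset $P_c$ obtained by stacking chains of sizes $c_1,\dots,c_k$: introduce, for each $i$, auxiliary coordinates $u_{i,1} \le \cdots \le u_{i,c_i-1}$ lying between $u_{i-1}$-related bounds, so that $\prod_i(q+(1-q)u_i)^{c_i-1}$ becomes $\prod$ over a larger order polytope, and deduce that $f_c(q) = \sum_{m} (\#\{\text{certain combinatorial objects of statistic } m\})\, q^m / (\text{positive normalizing constant})$. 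Either way the engine is the same: a positive integral/count, after the clean change of variables strips off $(1-q)^k$. The main obstacle I anticipate is purely bookkeeping — making the substitution $t_i = q+(1-q)u_i$ rigorously respect the nested limits (the new limits are indeed $u_i \le u_{i+1}$ only because the original ones were $t_i \le t_{i+1}$ and the substitution is the same affine map for every $i$), and then keeping the binomial bookkeeping straight so that the final coefficient of $q^m$ is visibly a sum of positive terms. There is no deep difficulty; the substitution is the whole point, and everything after it is a term-by-term nonnegativity check.
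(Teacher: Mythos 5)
Your argument is correct, and it takes a genuinely different route from the paper. The paper proves positivity by induction on the size of $c$: it derives the merging recursion $g_{c'^m}(q) = \frac{\beta'_m}{n}\, g_{c'^R}(q) + \bigl(1-\frac{\beta'_m}{n}\bigr) q^{c'_1} g_{c'^L}(q)$ from the explicit Vandermonde formula of Theorem \ref{theorem: g closed form}, observes that $c'^R$ and $c'^L$ have the same number of parts as $c$ so that the reduced polynomials satisfy the identical recursion with positive weights, and starts the induction at $c=(1,\dots,1)$ where $f_c = 1/k!$. (That induction silently requires the degree ranges of the two summands to cover all of $\{0,\dots,n-k\}$, which holds but is not spelled out.) You instead make the affine substitution $t_i = q+(1-q)u_i$, which cleanly strips off the factor $(1-q)^k$ and exhibits
\[
f_c(q) = \int_{0 \le u_1 \le \cdots \le u_k \le 1} \prod_{i=1}^k \bigl(u_i + q(1-u_i)\bigr)^{c_i-1}\, du_1\cdots du_k,
\]
after which the binomial expansion writes the coefficient of $q^m$ as a sum of products of binomial coefficients with strictly positive Beta-type integrals over the order simplex, nonempty for every $0\le m\le n-k$. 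This single computation also delivers parts 3 and 4 of Theorem \ref{theorem: composition polynomials} (polynomiality of $f_c$ and $f_c(1)=1/k!$) for free, and — unlike the alternating-sum formula the paper gives for $f_i$, which it admits does not explain positivity — it produces a manifestly positive closed expression for each coefficient. The one bookkeeping point you flag, that the nested limits transform correctly because the same affine map is applied to every variable, is exactly right and is the only thing that needs care.
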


\begin{theorem}\label{theorem: interpolation}
Let $c=(c_1, \ldots, c_k)$ be a composition and let $\beta_i=c_1+\cdots+c_i$ for $i=0,\dots,k$. Let $h(x) = a_0 + a_1x + \cdots + a_kx^k$ be the polynomial of smallest degree that passes through the $k+1$ points $(\beta_i,q^{\beta_i})$. Here the coefficients $a_i$ are functions of $q$. Then $a_k = (-1)^k g_c(q)$.
\end{theorem}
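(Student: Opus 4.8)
The plan is to identify the leading coefficient $a_k$ of the interpolating polynomial with a divided difference, and then match it against the closed form for $g_c(q)$ supplied by Theorem \ref{theorem: g closed form}. Since the $c_i$ are positive integers, the nodes satisfy $\beta_0 < \beta_1 < \cdots < \beta_k$, so they are distinct and there is a unique polynomial $h(x)$ of degree at most $k$ through the $k+1$ points $(\beta_i, q^{\beta_i})$. Writing it in Lagrange form,
\[
h(x) = \sum_{i=0}^k q^{\beta_i} \prod_{j \neq i} \frac{x - \beta_j}{\beta_i - \beta_j},
\]
and reading off the coefficient of $x^k$ from each Lagrange basis polynomial gives
\[
a_k = \sum_{i=0}^k \frac{q^{\beta_i}}{\prod_{j \neq i}(\beta_i - \beta_j)},
\]
which is exactly the $k$-th divided difference of the values $q^{\beta_i}$ at the nodes $\beta_i$.

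Next I would invoke Theorem \ref{theorem: g closed form}, which states $g_c(q) = \sum_{i=0}^k q^{\beta_i}/\prod_{j \neq i}(\beta_j - \beta_i)$. The only difference between the two expressions is that each of the $k$ factors $\beta_i - \beta_j$ appearing in the denominators of $a_k$ occurs as $\beta_j - \beta_i$ in $g_c(q)$. Hence $\prod_{j \neq i}(\beta_i - \beta_j) = (-1)^k \prod_{j \neq i}(\beta_j - \beta_i)$ for every $i$, since there are precisely $k$ indices $j \neq i$ in $\{0, \ldots, k\}$. Summing termwise yields $a_k = (-1)^k g_c(q)$, as claimed.

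There is no real obstacle here: the entire content is the standard fact that the leading coefficient of a Lagrange interpolant equals the top divided difference of the data, plus the sign bookkeeping above. If one wished to avoid citing Theorem \ref{theorem: g closed form}, the identity could instead be proved by induction on $k$, comparing the recursion for divided differences, namely $[y_0, \ldots, y_k] = ([y_1, \ldots, y_k] - [y_0, \ldots, y_{k-1}])/(\beta_k - \beta_0)$, with the effect of peeling off the outermost integral in the definition of $g_c(q)$; but the direct route via the explicit formula is cleaner, so that is the version I would present.
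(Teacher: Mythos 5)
Your proof is correct and follows essentially the same strategy as the paper: both arguments reduce the theorem to the explicit formula of Theorem \ref{theorem: g closed form} and then identify the leading coefficient $a_k$ of the interpolant with that sum up to the sign $(-1)^k$. The only cosmetic difference is that you extract $a_k$ from the Lagrange form (equivalently, as a divided difference), whereas the paper extracts it via Cramer's rule applied to the Vandermonde system; these are two presentations of the same computation, and your sign bookkeeping $\prod_{j\neq i}(\beta_i-\beta_j)=(-1)^k\prod_{j\neq i}(\beta_j-\beta_i)$ is exactly right.
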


\begin{theorem}\label{theorem: order polytope} 
There is a poset $P_c$ and an element $p \in P_c$ such that the volume of a slice of the order polytope $\mathcal{O}(P_c)$ in the $x_p$ direction is
\[
\vol({\mathcal{O}}(P_c) \cap (x_p=q)) = \frac{g_c(q)}{(c_1-1)! \cdots (c_k-1)!}.
\]
\end{theorem}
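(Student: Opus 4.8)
The plan is to read the iterated integral defining $g_c(q)$ directly as a volume on an order polytope. Given $c=(c_1,\dots,c_k)$ of size $n$, I would let $P_c$ be the poset on the ground set
\[
\{p\}\cup\{t_1,\dots,t_k\}\cup\{v_{i,j}: 1\le i\le k,\ 1\le j\le c_i-1\},
\]
with relations generated by the chain $p\lessdot t_1\lessdot t_2\lessdot\cdots\lessdot t_k$ together with, for each $i$, a pendant chain $v_{i,1}\lessdot v_{i,2}\lessdot\cdots\lessdot v_{i,c_i-1}\lessdot t_i$; the pendant chains are pairwise incomparable and incomparable to $p$. Thus $|P_c|=1+k+(n-k)=n+1$, and $p$ is the prescribed element.

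Next I would recall that the order polytope is $\mathcal{O}(P_c)=\{x\in[0,1]^{P_c}: x_a\le x_b\text{ whenever }a\le_{P_c}b\}$, and consider the $n$-dimensional slice $\mathcal{O}(P_c)\cap\{x_p=q\}$. To compute its volume I would integrate out the pendant coordinates first. For fixed values of $x_{t_1},\dots,x_{t_k}$, the relations through $p$ force $q=x_p\le x_{t_1}\le\cdots\le x_{t_k}\le 1$, while the only remaining constraints involve the pendant coordinates: for each $i$ independently, $0\le x_{v_{i,1}}\le\cdots\le x_{v_{i,c_i-1}}\le x_{t_i}$, a dilated standard simplex of volume $x_{t_i}^{c_i-1}/(c_i-1)!$. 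Hence the fiber over $(x_{t_1},\dots,x_{t_k})$ has volume $\prod_{i}x_{t_i}^{c_i-1}/(c_i-1)!$, and integrating over $q\le x_{t_1}\le\cdots\le x_{t_k}\le 1$ gives
\[
\vol\!\left(\mathcal{O}(P_c)\cap\{x_p=q\}\right)=\frac{1}{(c_1-1)!\cdots(c_k-1)!}\int_q^1\!\int_q^{t_k}\!\!\cdots\!\int_q^{t_2} t_1^{c_1-1}\cdots t_k^{c_k-1}\,dt_1\cdots dt_k,
\]
which is exactly $g_c(q)/((c_1-1)!\cdots(c_k-1)!)$.

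Finally, to reach the linear-extension interpretation advertised after the theorem, I would invoke Stanley's canonical unimodular triangulation of $\mathcal{O}(P_c)$ into simplices of volume $1/(n+1)!$ indexed by the linear extensions of $P_c$, and track how the hyperplane $x_p=q$ meets each such simplex: the rank of $p$ in the linear extension controls the contribution, expressing the coefficients of $g_c(q)$ (in a suitable basis) as counts of linear extensions of $P_c$ refined by the position of $p$. I expect no substantial obstacle: the only points requiring care are the choice of $P_c$ — using pendant \emph{chains} rather than antichains is what produces the factors $(c_i-1)!$ — and the routine verification that, with the $t_i$ fixed, the remaining constraints decouple into independent simplices so that Fubini reproduces the nested integral defining $g_c$.
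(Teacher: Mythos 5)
Your construction of $P_c$ (a chain $p\lessdot t_1\lessdot\cdots\lessdot t_k$ with a pendant chain of $c_i-1$ elements below each $t_i$) and your computation — fixing the chain coordinates, observing that the pendant coordinates decouple into independent dilated simplices of volume $x_{t_i}^{c_i-1}/(c_i-1)!$, and then integrating over $q\le x_{t_1}\le\cdots\le x_{t_k}\le 1$ to recover the defining iterated integral of $g_c(q)$ — are exactly the poset and the Fubini argument used in the paper's proof. The proposal is correct and takes essentially the same approach.
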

%
%We will prove these results in the following sections. We begin by deriving recursive and explicit formulas for $g_c(q)$ and $f_c(q)$.
%
%The polynomials $g_c(q)$ and $f_c(q)$ also arise as solutions to particular polynomial interpolation problems. We present this in Theorem \ref{thm: interpolation}.

%
%
%We delay the proof of Theorem  \ref{theorem: composition polynomials}.0  until Section \ref{section:interpolation}. Our proof of the remaining parts of Theorem \ref{theorem: composition polynomials} relies on a recursive construction of the polynomial $g_c(q)$. %The integral definition of $g_c(q)$ hints at this recursion.
%
\section{\textsf{Recursive and explicit formulas} \label{section:formulas}}

\begin{deff}
Define the \emph{truncated compositions} $c^L := (c_2,\dots,c_k)$ and $c^R:=(c_1,\dots,c_{k-1})$. For $m \in \{1,\dots, k-1\}$ we define the \emph{merged composition} $c^m$ as the composition formed by combining the parts $c_m$ and $c_{m+1}$ into a single part:
\[
c^m := (c_1,\dots, c_{m-1},c_m+c_{m+1},c_{m+2},\dots,c_k).
\]
%For an ordered set partition $\pi = B_1|\cdots|B_k$, we define the \emph{induced composition} $c(\pi) := (|B_1|,\dots,|B_k|)$. 
\end{deff}

%\begin{comment}
\begin{lemma}\label{lemma: baby recursion}
For a composition $c=(c_1,\dots,c_k)$ of $n$, the composition polynomial $g_c(q)$ satisfies the recursion:
\[
g_c(q) = \frac{1}{c_1}g_{c^1}(q) - \frac{q^{c_1}}{c_1}g_{c^L}(q).
%g_c(q) = \frac{1}{c_1}g_{(c_1+c_2,c_3,\dots,c_k)}(q) - \frac{q^{c_1}}{c_1}g_{(c_2,c_3,\dots,c_k)}(q).
\]
\end{lemma}

\begin{proof}
We have:
%can write the integral formula for $g_c(q)$ as follows:
%\end{comment}
\begin{align*}
g_c(q) &= \int_{q}^{1} \int_{q}^{t_k} \cdots \int_{q}^{t_2} t_1^{c_1-1}\cdots t_k^{c_k-1} dt_1\cdots dt_k \\
&=  \frac{1}{c_1}\int_{q}^{1} \int_{q}^{t_k} \cdots \int_{q}^{t_3} t_2^{c_2-1}\cdots t_k^{c_k-1} (t_2^{c_1} - q^{c_1})dt_2\cdots dt_k \\
&= \frac{1}{c_1}g_{(c_1+c_2,c_3,\dots,c_k)}(q) - \frac{q^{c_1}}{c_1}g_{(c_2,c_3,\dots,c_k)}(q) 
%\\
%&= \frac{1}{c_1}g_{c^1}(q) - \frac{q^{c_1}}{c_1}g_{c^L}(q).
\end{align*}
as we wished to show. \end{proof}

%Lemma \ref{lemma: baby recursion} does not yet give us enough to prove Theorem \ref{theorem: composition polynomials}. We will use it to derive the explicit formula for $g_c(q)$ in Proposition \ref{theorem: g closed form}, which in turn will give us a stronger recursive formula in Corollary \ref{cor: merged recursion}. We will then use that formula to prove Theorem \ref{theorem: composition polynomials}. 

%First let us introduce notation related to partial sums of the $c_i$. Let $c=(c_1,\dots,c_k)$ be a composition of size $n$. Now define 
Consider the sequence of partial sums $0=\beta_0<\cdots<\beta_k=n$ by $\beta_i=c_1+\cdots+c_i$ for $i=1,\dots,k$.  Let $(\beta)$ denote the Vandermonde matrix 
 \[
 (\beta) = \begin{pmatrix}
 1 & \beta_0 & \cdots & \beta_0^k \\
 \vdots & \vdots&  & \vdots \\
  1 & \beta_k & \cdots & \beta_k^k \\
 \end{pmatrix}.
\] 
We will index the rows and columns of this matrix from $0$ to $k$. Recall that 
\[
\text{det}(\beta) = \prod _{0\leq i < j \leq k} (\beta_j-\beta_i).
\]
For $0 \leq i \leq k$ let %$[\beta_i]$ be a factor of $\text{det}(\beta)$ defined by
\[
[\beta_i] := (-1)^i \prod_{j\neq i} (\beta_j - \beta_i), \qquad [\hat{\beta}_i] := \text{det}(\beta)/[\beta_i]. 
\]
Notice that $[\hat{\beta}_i]$ is the unsigned minor of $(\beta)$ obtained by removing row $i$ and column $k$. Moreover, $[\hat{\beta}_i]$ is itself a Vandermonde determinant. We are ready to prove our explicit formula for composition polynomials, which we rewrite as:
%
%
%
%\begin{prop}\label{prop: g closed form}
%The composition polynomial $g_c(q)$ has closed form
\[
g_c(q)=\sum_{i=0}^k (-1)^i \frac{q^{\beta_i}}{[\beta_i]}.
\]
%\end{prop}
\begin{proof}[Proof of Theorem \ref{theorem: g closed form}]
%Let $0=\beta_0<\dots<\beta_k$ be the sequence of partial sums of the parts of the composition $c=(c_1,\dots,c_k)$. 
%Define $[\beta_i]$ as above, and d
Define $[\beta_i^R]$ analogously to $[\beta_i]$ for the truncated composition $c^R=(c_1,\dots,c_{k-1})$.
Proceed by induction on $k$. If $k=1$ then
\[
\int_{q}^{1}t_1^{c_1-1}dt_1 = \frac{1}{c_1} - \frac{q^{c_1}}{c_1} = \frac{q^{\beta_0}}{[\beta_0]} - \frac{q^{\beta_1}}{[\beta_1]}.
\]
Now assume that the formula holds up to $k-1$. Then 
\[
g_{c^R}(q)=\int_{q}^{1} \cdots \int_{q}^{t_2} \mathbf{t^{c^R-1}}dt_1\cdots dt_{k-1} =   \sum_{i=0}^{k-1} (-1)^i \frac{q^{\beta_i}}{[\beta^R_i]}.
\]
Changing the upper bound of the outer integral produces
\[
\int_{q}^{t_k} \cdots \int_{q}^{t_2} \mathbf{t^{c^R-1}}dt_1\cdots dt_{k-1} = \sum_{i=0}^{k-1} (-1)^i \frac{q^{\beta_i}t_k^{\beta_{k-1}-{\beta_i}}}{[\beta^R_i]}.
\]
This follows from the observation that this integral must evaluate to a homogeneous polynomial in $t_k$ and $q$ of total degree $c_1+\cdots+c_{k-1}= \beta_{k-1}$. The original integral we wish to compute becomes
\begin{align*}
g_c(q) &= \int_q^1 t_k^{c_k-1}\sum_{i=0}^{k-1} (-1)^i \frac{q^{\beta_i}t_k^{\beta_{k-1}-{\beta_i}}}{[\beta^R_i]} dt_k\\
&=  \sum_{i=0}^{k-1} (-1)^i q^{\beta_i} \int_q^1 \frac{t_k^{\beta_{k}-{\beta_i}-1}}{[\beta^R_i]}dt_k\\
%&= \int_q^1 \sum_{i=0}^{k-1} (-1)^i \frac{q^{\beta_i}t_k^{\beta_{k}-{\beta_i}-1}}{[\beta^R_i]}dt_k\\
&=\sum_{i=0}^{k-1} (-1)^i \frac{q^{\beta_i}}{[\beta_i]} - q^{\beta_k}\sum_{i=0}^{k-1} \frac{(-1)^i }{[\beta_i]}.
\end{align*}
%Here we changed the denominators by using the fact that $(\beta_k-\beta_i)[\beta^R_i] = [\beta_i]$. 
Now observe that $(\beta)\sum_{i=0}^{k} {(-1)^i }/{[\beta_i]} = \sum_{i=0}^{k} {(-1)^i }{[\hat{\beta}_i]}$ computes, up to sign, the determinant of the matrix formed by replacing the last column in the Vandermonde matrix $(\beta)$ with a column of $1$s. This determinant is clearly zero, hence $-\sum_{i=0}^{k-1} {(-1)^i }/{[\beta_i]} = {(-1)^k}/{[\beta_k]}$. This gives us the desired result.
\end{proof}

\begin{cor}\label{cor: merged closed form}
Given a composition $c=(c_1,\dots,c_k)$, the composition polynomials of the associated merged and truncated compositions are given by
\begin{align*}
g_{c^m}(q)&=\sum_{i=0}^k (-1)^i \frac{q^{\beta_i}(\beta_m-\beta_i)}{[\beta_i]}, \\
g_{c^R}(q) &= \sum_{i=0}^k (-1)^i \frac{q^{\beta_i}(n-\beta_i)}{[\beta_i]}, \text{and}\\
q^{c_1}g_{c^L}(q) &= -\sum_{i=0}^k (-1)^i \frac{q^{\beta_i}\beta_i}{[\beta_i]}.
\end{align*}
\end{cor}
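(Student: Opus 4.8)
The plan is to derive all three formulas directly from the explicit formula for $g_c(q)$ in Theorem \ref{theorem: g closed form}, combined with the two recursions already available: Lemma \ref{lemma: baby recursion} and the analogous ``right'' recursion obtained by integrating the outermost variable last (which, by the computation in the proof of Theorem \ref{theorem: g closed form}, reads $g_c(q) = \frac{1}{c_k}g_{c^{k-1}}(q) - \frac{q^{?}}{\,?\,}g_{c^R}(q)$ — one should state and prove the appropriate symmetric recursion, or equivalently use the reversal identity $g_{\bar c}(q)=q^n g_c(1/q)$ from Theorem \ref{theorem: composition polynomials}(1) together with Lemma \ref{lemma: baby recursion}). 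The key observation that makes everything fall out cleanly is that for the merged composition $c^m$, the partial sums are exactly $\beta_0,\dots,\beta_{m-1},\beta_{m+1},\dots,\beta_k$, i.e.\ the list $(\beta_i)$ with $\beta_m$ deleted; for the right-truncated composition $c^R$ they are $\beta_0,\dots,\beta_{k-1}$; and for the left-truncated composition $c^L$, shifted appropriately, they are $\beta_1-\beta_0,\dots,\beta_k-\beta_0$ with $\beta_0$ deleted.

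First I would prove the $g_{c^m}$ formula. Apply Theorem \ref{theorem: g closed form} to $c^m$: its partial sums are $\gamma_0,\dots,\gamma_{k-1}$ where $\gamma_\ell = \beta_\ell$ for $\ell<m$ and $\gamma_\ell=\beta_{\ell+1}$ for $\ell\geq m$. Thus
\[
g_{c^m}(q) = \sum_{\ell=0}^{k-1} \frac{q^{\gamma_\ell}}{\prod_{j\neq \ell}(\gamma_j-\gamma_\ell)} = \sum_{\substack{i=0\\ i\neq m}}^{k} \frac{q^{\beta_i}}{\prod_{\substack{j\neq i\\ j\neq m}}(\beta_j-\beta_i)}.
\]
Now in each term, multiply numerator and denominator by $(\beta_m-\beta_i)$ to reinsert the missing factor into the product $\prod_{j\neq i}(\beta_j-\beta_i)=(-1)^i[\beta_i]/(-1)^i$; being careful with signs, $\prod_{j\neq i}(\beta_j-\beta_i) = (-1)^i[\beta_i]$ by definition, so
\[
g_{c^m}(q) = \sum_{\substack{i=0\\ i\neq m}}^{k} \frac{q^{\beta_i}(\beta_m-\beta_i)}{(-1)^i[\beta_i]} = \sum_{i=0}^{k} (-1)^i\frac{q^{\beta_i}(\beta_m-\beta_i)}{[\beta_i]},
\]
where the $i=m$ term may be added freely since its numerator $(\beta_m-\beta_m)=0$. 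This is the claimed identity. The $g_{c^R}$ formula is the special case $m=k$ (noting $c^R = c^k$ in the sense that merging the last two parts when there are... ) — more carefully, $c^R$ has partial sums $\beta_0,\dots,\beta_{k-1}$, which is the list with $\beta_k=n$ deleted, so the identical argument with $\beta_m$ replaced by $\beta_k=n$ gives $g_{c^R}(q)=\sum_{i=0}^k(-1)^i q^{\beta_i}(n-\beta_i)/[\beta_i]$.

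For the $q^{c_1}g_{c^L}$ formula I would proceed from Lemma \ref{lemma: baby recursion}, which states $g_c(q) = \frac{1}{c_1}g_{c^1}(q) - \frac{q^{c_1}}{c_1}g_{c^L}(q)$, hence $q^{c_1}g_{c^L}(q) = g_{c^1}(q) - c_1 g_c(q)$. Substituting the already-established formula for $g_{c^1}(q)$ (the $m=1$ case of the first identity) and the formula for $g_c(q)$ from Theorem \ref{theorem: g closed form}, and using $c_1=\beta_1-\beta_0=\beta_1$ (since $\beta_0=0$), gives
\[
q^{c_1}g_{c^L}(q) = \sum_{i=0}^k (-1)^i \frac{q^{\beta_i}(\beta_1-\beta_i)}{[\beta_i]} - \beta_1\sum_{i=0}^k (-1)^i \frac{q^{\beta_i}}{[\beta_i]} = -\sum_{i=0}^k (-1)^i \frac{q^{\beta_i}\beta_i}{[\beta_i]},
\]
as desired. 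Alternatively, one can derive this directly: $c^L$ has partial sums $\beta_1-\beta_0 < \beta_2-\beta_0 < \cdots < \beta_k-\beta_0$, i.e.\ $\beta_i-\beta_0$ for $i=1,\dots,k$; applying Theorem \ref{theorem: g closed form} and then multiplying by $q^{c_1}=q^{\beta_1-\beta_0}$... but since $\beta_0=0$ this shift is trivial and one recovers the same sum after checking that the differences $[\beta_i]$ of the shifted $\beta$'s agree (they do, differences being translation-invariant), with the $i=0$ term contributing $0$ to the left-hand expression; matching the extra sign is where I expect the only real bookkeeping.

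The main obstacle is purely sign/index bookkeeping: keeping straight the relation $\prod_{j\neq i}(\beta_j-\beta_i)=(-1)^i[\beta_i]$, confirming that the reindexing $\gamma_\ell \leftrightarrow \beta_i$ for the merged composition is an order-preserving bijection of the shortened partial-sum list, and verifying that the ``spuriously added'' $i=m$ (resp.\ $i=0$) terms genuinely vanish. There is no analytic or structural difficulty — everything is an algebraic consequence of Theorem \ref{theorem: g closed form} and Lemma \ref{lemma: baby recursion}.
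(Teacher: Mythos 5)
Your proposal is correct and follows essentially the same route as the paper: substitute the partial sums of $c^m$, $c^R$, $c^L$ into Theorem \ref{theorem: g closed form} and relate the resulting Vandermonde products to the $[\beta_i]$ of the original composition (the paper records these relations as $[\beta^m_i]=[\beta_i]/(\beta_m-\beta_i)$, etc.). Your derivation of the third identity from Lemma \ref{lemma: baby recursion} together with the $m=1$ case of the first identity is a harmless algebraic variant of the paper's direct substitution $[\beta^L_i]=[\beta_{i+1}]/\beta_{i+1}$, and your sign bookkeeping via $\prod_{j\neq i}(\beta_j-\beta_i)=(-1)^i[\beta_i]$ checks out.
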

\begin{proof}
For the merged composition $c^m$, the partial sums $\beta^m_i$ are given by $\beta_i^m = \beta_i$ for $i<m$, and $\beta_i^m = \beta_{i+1}$ for $i\geq m$. From this observe that $[\beta^m_i] = [\beta_i]/(\beta_m-\beta_i)$ for $i<m$ and $[\beta^m_i] = [\beta_{i+1}]/(\beta_{i+1}-\beta_m)$ for $i\geq m$. Notice that the coefficient of $q^{\beta_m}$ is zero, as it should be. 

For the truncated composition $c^R$ the partial sums $\beta^R_i$ follow this same pattern. Finally, for the truncation $c^L$ we have $\beta^L_i = \beta_{i+1}-\beta_1$ for $i\geq 1$, and $\beta^L_0 = 0$. From this we observe that $[\beta^L_i] = [\beta_{i+1}]/\beta_{i+1}$ for all $i$. Substituting  into Theorem \ref{theorem: g closed form} yields the desired formulas.
\end{proof}

Now we can write down a stronger recursive formula for $g_{c}(q)$ that will be the key to our proof of Theorem \ref{theorem: positivity}.

\begin{cor}\label{cor: merged recursion}
Let $c=(c_1,\dots,c_k)$ be a composition of $n$ into $k$ parts. Let $c^m$ be the merged composition $(c_1,\dots,c_m+c_{m+1},\dots,c_k)$, and let $c^L = (c_2,\dots,c_k)$ and $c^R = (c_1,\dots,c_{k-1})$ be the truncated compositions. Then % the composition polynomial $g_{c^m}(q)$ follows the recursion
\begin{equation}\label{g recursion}
g_{c^m}(q) = \left(\frac{c_1+\cdots+c_m}{c_1+\cdots+c_k}\right)g_{c^R}(q)  + \left(\frac{c_{m+1}+\cdots+c_k}{c_1+\cdots+c_k}\right)q^{c_1}g_{c^L}(q).
%
%g_{c^m}(q) = \frac{\beta_m}{n}g_{c^R}(q)  + \left(1-\frac{\beta_m}{n}\right)q^{c_1}g_{c^L}(q).
\end{equation}
\end{cor}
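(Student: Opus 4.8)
The plan is to read the recursion straight off the explicit expansions collected in Corollary \ref{cor: merged closed form}. Put $\beta_i = c_1 + \cdots + c_i$, so that $\beta_0 = 0$ and $\beta_k = n$; then $c_1 + \cdots + c_m = \beta_m$ and $c_{m+1} + \cdots + c_k = n - \beta_m$, so the identity (\ref{g recursion}) to be proved is
\[
g_{c^m}(q) \;=\; \frac{\beta_m}{n}\, g_{c^R}(q) \;+\; \frac{n-\beta_m}{n}\, q^{c_1} g_{c^L}(q).
\]

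First I would substitute the three formulas of Corollary \ref{cor: merged closed form} into the right-hand side and collect the sums over $i$ into a single sum. The coefficient of $(-1)^i q^{\beta_i}/[\beta_i]$ coming from the right-hand side is
\[
\frac{\beta_m}{n}\,(n - \beta_i) \;-\; \frac{n-\beta_m}{n}\,\beta_i \;=\; \frac{\beta_m n - \beta_m\beta_i - n\beta_i + \beta_m\beta_i}{n} \;=\; \beta_m - \beta_i ,
\]
where the cross terms $\pm\beta_m\beta_i$ cancel. Since this is exactly the coefficient of $(-1)^i q^{\beta_i}/[\beta_i]$ in the formula for $g_{c^m}(q)$ given in Corollary \ref{cor: merged closed form}, the two sides agree term by term and we are done. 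As a sanity check, the $i=m$ term is $0$ on both sides, consistent with the remark (in the proof of Corollary \ref{cor: merged closed form}) that $g_{c^m}(q)$ has no $q^{\beta_m}$ term.

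I do not expect any real obstacle here: granted Corollary \ref{cor: merged closed form}, this is pure bookkeeping, and the only thing demanding a moment's attention is the global minus sign sitting in front of the formula for $q^{c_1}g_{c^L}(q)$. If one wished to bypass the closed forms entirely, an alternative is to combine Lemma \ref{lemma: baby recursion} with its left-right mirror image (the analogous recursion splitting off $c_k$ instead of $c_1$) to express both $g_{c^R}$ and $q^{c_1}g_{c^L}$ in a common form and then recombine; that works but is more painful than the Vandermonde computation above, so I would present the latter.
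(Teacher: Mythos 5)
Your proof is correct and follows exactly the route the paper takes: the paper's own proof simply says the recursion "follows immediately from Corollary \ref{cor: merged closed form}," and your term-by-term coefficient computation $\frac{\beta_m}{n}(n-\beta_i) - \frac{n-\beta_m}{n}\beta_i = \beta_m - \beta_i$ is precisely the bookkeeping being left implicit there. Nothing to add.
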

\begin{proof}
This follows immediately from Corollary \ref{cor: merged closed form}.
\end{proof}

It is possible to write down several recursive equations for $g_c$, but this particular one is significant for several reasons:

\begin{itemize}
\item
Every non-trivial composition $c$ can be thought of as a merged composition for some $m$, and the sizes of $c^L$ and $c^R$ are each strictly less than the size of $c^m$. This means we have actually produced a recursive expression for an arbitrary nontrivial composition polynomial in terms of ``smaller" composition polynomials. This will allow us to prove Theorem \ref{theorem: composition polynomials} inductively.
\item
The compositions $c^m, c^L,$ and $c^R$ have the same length, so the polynomials $f_c$ turn out to satisfy exactly the same recursion as $g_c$ by Theorem \ref{theorem: composition polynomials}.4.
\item
Since this recursion only has positive terms, we will then obtain a proof of Theorem \ref{theorem: positivity}, the positivity of $f_c$.
\end{itemize}

\begin{proof}[{Proof of Theorems \ref{theorem: composition polynomials} and \ref{theorem: positivity}}]%.1 -- \ref{theorem: composition polynomials}.5}]

%
%To show the reversal property, first observe that it holds trivially for the trivial composition. Then assume by induction that the property holds for all compositions of size less than $n$. Let $c = (c_1,\dots,c_k)$ be a composition of size $n$ and let $c^m = (c_1,\dots, c_m+c_{m+1},\dots,c_k)$ be a merging of $c$. The reverse $\overline{c^m} = (c_k,\dots, c_{m+1}+c_m,\dots,c_1)$ can also be written as
%\[
%\overline{c^m} = \bar{c}^{k-m} = (\bar{c}_1,\dots,\bar{c}_{k-m}+\bar{c}_{k-m+1},\dots,\bar{c}_k)
%\]
%with partial sums $\bar{\beta}_i = \bar{c}_1+\cdots+\bar{c}_i$. Similarly $\overline{c^L} = \bar{c}^R$ and $\overline{c^R} = \bar{c}^L$. Using the recursive formula in \eqref{f recursion} we then write
%\begin{align*}
%f_{\overline{c^m}}(q) = f_{\bar{c}^{k-m}}(q) &= \frac{\bar{\beta}_{k-m}}{n}f_{\bar{c}^R}(q)  + \left(1-\frac{\bar{\beta}_{k-m}}{n}\right)q^{\bar{c}_1}f_{\bar{c}^L}(q) \\
%&= \frac{n-\beta_m}{n}f_{\overline{c^L}}(q)  + \frac{\beta_m}{n}q^{c_k}f_{\overline{c^R}}(q) \\
%&= \left(1-\frac{\beta_m}{n}\right)q^{n-c_1-k+1}f_{c^L}(1/q)  + \frac{\beta_m}{n}q^{n-k+1}f_{c^R}(q) \\
%&= q^{n-k+1}f_{c^m}(1/q),
%\end{align*}
%as desired. In the penultimate step we used the inductive hypothesis to write $f_{\overline{c^L}}(q)$ and $f_{\overline{c^R}}(q)$ in terms of $f_{c^L}(q)$ and $f_{c^L}(q)$, respectively.
Parts 1. and 2. of Theorem \ref{theorem: composition polynomials}  follow readily from our explicit formula for $g_c(q)$.
The partial sums of the reversal $\bar{c}$ are $\bar{\beta}_i = n - \beta_{k-i}$, and $[\bar{\beta}_i] = [\beta_{k-i}]$. The partial sums of $mc$ are $m\beta_i$, and $[m\beta_i] = m^k[\beta_i]$. Substituting these into Theorem \ref{theorem: g closed form} gives the results. 
 % $g_{m c}(q)= \frac{1}{m^k}g_c(q^{m}) = \frac{1}{m^k}(1-q^m)^kf_c(q^{m})$. %, and the formula for $f_{\alpha c}(q)$ follows by removing the factor $(1-q)^k$ from $(1-q^\alpha)^k$.

%\noindent\emph{Proof of Theorem \ref{theorem: composition polynomials}}. 
We prove Theorems \ref{theorem: composition polynomials}.3, \ref{theorem: composition polynomials}.4, and \ref{theorem: positivity} by induction on the size of $c$ for a fixed $k$. The base case is  $c=(1,\dots,1)$, the composition of $k$ into $k$ parts. Theorem \ref{theorem: g closed form} gives
\[
g_{(1,\dots,1)}(q) = \sum_{i=0}^k (-1)^i \frac{q^{i}}{i!(k-i)!} = \frac{1}{k!}(1-q)^k.
\]
from which the claims follow readily.

%Hence $f_{(1,\dots,1)}(q) = 1/{k!}$ and all of the properties of the theorem are trivially satisfied. 
Now suppose $c$ has size $n>k$. Then some part of $c$ is greater than $1$, and we can write $c$ as some merged composition $c'^m$ for some composition $c'$.
By Corollary \ref{cor: merged recursion}, % the recursive formula in \eqref{g recursion} we can express $g_c(q)$ as
\[
g_c(q) = g_{c'^m}(q) = \frac{\beta'_m}{n}g_{c'^R}(q)  + \left(1-\frac{\beta'_m}{n}\right)q^{c'_1}g_{c'^L}(q).
\]
Notice that $c'^R$ and $c'^L$ are compositions of length $k$ and size strictly smaller than $c$. Therefore by induction we may write
\begin{align*}
g_c(q) &= \frac{\beta'_m}{n}(1-q)^k f_{c'^R}(q)  + \left(1-\frac{\beta'_m}{n}\right)q^{c'_1}(1-q)^k f_{c'^L}(q) \\
&=(1-q)^k\left(\frac{\beta'_m}{n} f_{c'^R}(q)  + \left(1-\frac{\beta'_m}{n}\right)q^{c'_1} f_{c'^L}(q)\right) \\
&=: (1-q)^kf_c(q).
\end{align*}
where $f_c(q)$ is a polynomial of degree $n-k$. Since $\frac{\beta'_m}{n}$ and $\left(1-\frac{\beta'_m}{n}\right)$ are positive and they sum to 1, $f_c$ inherits the desired properties from $f_{c'^R}$ and $f_{c'^L}$.
%This, combined with the fact that $f_{(1,\dots,1)}(q)=1/{k!}$ gives us the properties that for arbitrary $c$ with $k$ parts the coefficients of $f_c(q)$ will be strictly positive and sum to $1/{k!}$.
%\hspace*{\fill}$\square$
\end{proof}

Further examples seem to suggest that the sequence of coefficients of $f_c(q)$ is unimodal, meaning that the coefficients $f_c(q) = \sum_{i = 0}^{n-k}f_iq^i$ satisfy the inequalities $f_1 \leq f_2 \leq \cdots \leq f_{i-1} \leq f_i \geq f_{i+1} \geq \cdots \geq f_{n-k}$ for some $i$. More strongly, the sequence may even be log-concave, meaning that $f_j^2 \geq f_{j-1}f_{j+1}$ for all $j$. We have verified both statements for all 335,922 compositions of at most 7 parts and sizes of parts at most 6.
%coefficients $a_i$ of $f_c(q) = \sum_{i = 0}^{n-k}a_iq^i$. 
\begin{quest}\label{question}
Is the sequence of coefficients of $f_c(q)$ always unimodal?
\end{quest}
 %, and in all cases the sequence of coefficients is log-concave. We state this as a conjecture.

Since $g_c(q)$ is essentially the volume of a Minkowski sum of two polytopes (Proposition \ref{prop: face decomp}), one might hope to derive the log-concavity of the $f_i$ from the Aleksandrov-Fenchel inequalities \cite{StLog, StFenchel}. The ``obvious" application of these inequalities does not seem to give the desired result, and the question remains open.

We conclude this section with an explicit formula for the coefficients of $f_c(q)$. Unfortunately, this formula does not seem to explain their unimodality, or even their positivity (Theorem \ref{theorem: positivity}). 
Recall the notation
\[
\left(\!\!\binom{n}{k}\!\! \right) := {{n+k-1}\choose{k-1}}
\]
for the number of multisets of $[n]$ of size $k$. 

\begin{cor}
The reduced composition polynomial $f_c(q)=\sum_{i=0}^{n-k}f_iq^i$ has
\[
f_i = \sum_{j:\beta_j\leq i} \frac{(-1)^{j}}{[\beta_j]} 
\left(\!\!\binom{k}{i-\beta_j}\!\!\right).
\]
\end{cor}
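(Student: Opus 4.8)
The plan is to start from the closed form $g_c(q) = \sum_{i=0}^k (-1)^i q^{\beta_i}/[\beta_i]$ of Theorem \ref{theorem: g closed form} and divide by $(1-q)^k$. The key observation is that $1/(1-q)^k = \sum_{m \geq 0} \left(\!\!\binom{k}{m}\!\!\right) q^m$, since $\left(\!\!\binom{k}{m}\!\!\right) = \binom{m+k-1}{k-1}$ is exactly the coefficient of $q^m$ in the expansion of $(1-q)^{-k}$. Thus, formally,
\[
f_c(q) = \frac{g_c(q)}{(1-q)^k} = \sum_{i=0}^k \frac{(-1)^i}{[\beta_i]} \, q^{\beta_i} \sum_{m \geq 0} \left(\!\!\binom{k}{m}\!\!\right) q^m = \sum_{i=0}^k \sum_{m \geq 0} \frac{(-1)^i}{[\beta_i]} \left(\!\!\binom{k}{m}\!\!\right) q^{\beta_i + m},
\]
and collecting the coefficient of $q^\ell$ (substituting $m = \ell - \beta_j$, which requires $\beta_j \leq \ell$) gives exactly the claimed formula $f_\ell = \sum_{j : \beta_j \leq \ell} \frac{(-1)^j}{[\beta_j]} \left(\!\!\binom{k}{\ell - \beta_j}\!\!\right)$.

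First I would justify that this formal power-series manipulation is legitimate: by Theorem \ref{theorem: composition polynomials}.3 we already know $f_c(q) = g_c(q)/(1-q)^k$ is an honest polynomial of degree $n-k$, so dividing the polynomial $g_c$ by $(1-q)^k$ in the ring of formal power series $\Q[[q]]$ yields the same result as the polynomial division, and we may freely multiply the power series $g_c(q)$ and $(1-q)^{-k}$ term by term. Then I would carry out the Cauchy product above and read off coefficients. A small point worth noting is that the outer sum can be written as ranging over all $j \in \{0, \ldots, k\}$ with the convention that $\left(\!\!\binom{k}{\ell - \beta_j}\!\!\right) = 0$ when $\ell < \beta_j$; the stated form with the condition $\beta_j \leq i$ under the summation sign handles this cleanly.

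There is essentially no obstacle here — the corollary is a routine consequence of Theorem \ref{theorem: g closed form} together with the generating function identity for negative binomial coefficients. The only thing requiring a moment's care is the bookkeeping: one must remember that $[\beta_i]$ already absorbs the sign $(-1)^i$ by its definition $[\beta_i] = (-1)^i \prod_{j \neq i}(\beta_j - \beta_i)$, so in the rewriting $g_c(q) = \sum_i (-1)^i q^{\beta_i}/[\beta_i]$ one does not double-count the sign; the final formula for $f_i$ keeps the explicit $(-1)^j$ because the statement is phrased in terms of $[\beta_j]$, so the two are consistent. I would present the three-line computation above as the complete proof.
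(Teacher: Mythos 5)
Your proposal is correct and is essentially identical to the paper's own proof: the paper likewise writes $f_c(q) = g_c(q)(1+q+q^2+\cdots)^k$, expands $(1-q)^{-k}$ via the multiset coefficients $\left(\!\!\binom{k}{i}\!\!\right)$, and reads off the coefficient of $q^i$ from the Cauchy product with the closed form of Theorem \ref{theorem: g closed form}. Your added remarks on the legitimacy of the formal power series division and on the sign convention for $[\beta_j]$ are accurate but not needed beyond what the paper records.
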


\begin{proof}
We compute
\begin{align*}
f_c(q) &= g_c(q)(1+q+q^2+\cdots)^k \\
&= \left(\sum_{j=0}^k(-1)^j\frac{q^{\beta_j}}{[\beta_j]}\right)\left(\sum_{i=0}^\infty  \left(\!\!\binom{k}{i}\!\!\right)q^i \right)
%\\
%&= \sum_{i=0}^{n-k}q^i\sum_{j:\beta_j\leq i}(-1)^j{{k-1+i-\beta_j}\choose{k-1}}/[{\beta_j}].
\end{align*}
%\begin{align*}
%f(q) &= g(q)(1+q+q^2+\cdots)^k \\
%&= \left(\sum_{j=0}^k(-1)^j\frac{q^{\beta_j}}{[\beta_j]}\right)\left(\sum_{i=0}^\infty q^i{{k-1+i}\choose{k-1}}\right)\\
%&= \sum_{i=0}^{n-k}q^i\sum_{j:\beta_j\leq i}(-1)^j{{k-1+i-\beta_j}\choose{k-1}}/[{\beta_j}].
%\end{align*}
as desired. \end{proof}

%While interesting, this formula doesn't easily imply Theorem \ref{theorem: composition polynomials}. We will have to proceed recursively. Theorem \ref{theorem: g closed form} gives formulas for the composition polynomials of the merged compositions $c^m$ and the truncated compositions $c^L$ and $c^R$.

\section{\textsf{Composition polynomials in polynomial interpolation.}}\label{section:interpolation}

Now we prove Theorem \ref{theorem: interpolation}, which shows that composition polynomials have a very natural interpretation in terms of the polynomial interpolation of an exponential function $e(x)=q^x$. 

Recall that $h(x)$ is the polynomial of smallest degree which agrees with $e(x)=q^x$ at the points $\beta_i = c_1 + \cdots +c_i$. We wish to show that the leading coefficient of $h(x)$, which is a function of $q$, in fact equals $(-1)^k g_c(q)$.

\begin{proof}[Proof of Theorem \ref{theorem: interpolation}]
Theorem \ref{theorem: g closed form} implies that $\text{det}(\beta)g_c(q) = \sum_{i=0}^k (-1)^i q^{\beta_i} [\hat{\beta}_i]$, which we rewrite as

\begin{equation}\label{vandermonde}
 \det(\beta) \cdot  g_c(q) = (-1)^k\text{det}\begin{pmatrix}
 1 & \beta_0 & \cdots & \beta_0^{k-1} & q^{\beta_0} \\
 \vdots & \vdots& \cdots & \vdots & \vdots \\
  1 & \beta_k & \cdots & \beta_k^{k-1} & q^{\beta_k} \\
 \end{pmatrix}.
\end{equation}

%In \eqref{vandermonde} we observed that $g_c(q)$ can be expressed as the determinant of a matrix:
%\[
% g_c(q) = (-1)^k\text{det}\begin{pmatrix}
% 1 & \beta_0 & \cdots & \beta_0^{k-1} & q^{\beta_0} \\
% \vdots & \vdots& \cdots & \vdots & \vdots \\
%  1 & \beta_k & \cdots & \beta_k^{k-1} & q^{\beta_k} \\
% \end{pmatrix}/[\beta].
%\] 
Now notice that this is, up to sign, precisely what we obtain when we use Cramer's rule to solve for $a_k$ in the system of linear equations
\[
\begin{pmatrix}
 1 & \beta_0 & \cdots & \beta_0^{k}  \\
 \vdots & \vdots& \cdots & \vdots \\
  1 & \beta_k & \cdots & \beta_k^{k}  \\
 \end{pmatrix}
 \begin{pmatrix}
 a_0  \\
 \vdots  \\
 a_k \\
 \end{pmatrix}
 =
 \begin{pmatrix}
 q^{\beta_0}  \\
 \vdots  \\
 q^{\beta_k} \\
 \end{pmatrix}.
 \]
But this system is equivalent to the polynomial interpolation problem under consideration. The desired result follows. %of finding the coefficients of the degree-$k$ polynomial $h(t) = x_0 + \cdots + t^k x_k$ that passes through the $k+1$ points $(\beta_i,q^{\beta_i})$ for $i=0,\dots,k$. 
%The polynomial $g_c(q)$ is simply the lead coefficient $a_k$ of $h(t)$, up to sign.
\end{proof}

We can also interpret the individual coefficients of $f_c(q)$ in terms of the polynomial interpolation of a polynomial function which has been ``shut off" after $q=i$.
%the function which equals the polynomial $\left(\!\binom{k}{x}\!\right)$ for $x \geq 0$ and equals $0$ for $x<0$.
%That 1 is a root of $g_c(q)$ with multiplicity $k$ can be loosely related to the fact that all of the coefficients $x_i$ of $h(t)$ will be zero if $q=1$ in the interpolation problem. Indeed, in that situation the points $(\beta_i,1)$ all fall on the horizontal line $t = 1$. The coefficients of $f_c(q)$ can be interpreted using interpolation as well. 
Consider the function
\[
d(x) = \begin{cases}  \big(\!\binom{k}{i-x}\!  \big), & \mbox{ if } x\leq i \\ 0, & \mbox{ if } x > i \end{cases}
\]

\begin{prop}
Let $f_c(q)= \sum_{i=0}^{n-k}f_iq^i$. Then $(-1)^k f_i$ is the lead coefficient of the polynomial $p_i(x)$ of smallest degree that passes through the points $(\beta_j, h(\beta_j))$ for $j=0, 1, \ldots, k$.
%$(\beta_j, 0)$ for $\beta_j > i$ and 
%$\left( \beta_j, \left(\!\binom{k}{i-\beta_j}\!\right) \right)$
%$\left(\beta_j,{{k-1+i-\beta_j}\choose{k-1}}\right)$ 
%for $\beta_j \leq i$. %The fact that the $a=f_i$ are positive implies that $(-1)^k p_i(t)$ has positive lead coefficient. 
\end{prop}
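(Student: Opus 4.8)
The plan is to mirror the proof of Theorem~\ref{theorem: interpolation} almost verbatim, but starting from the explicit formula for the coefficients $f_i$ of the reduced composition polynomial rather than from the formula for $g_c(q)$ itself. Recall from the Corollary just before Section~\ref{section:interpolation} that
\[
f_i = \sum_{j \,:\, \beta_j \leq i} \frac{(-1)^j}{[\beta_j]} \left(\!\!\binom{k}{i-\beta_j}\!\!\right).
\]
In the notation of the function $d(x)$ introduced above, $d(\beta_j) = \bigl(\!\binom{k}{i-\beta_j}\!\bigr)$ when $\beta_j \leq i$ and $d(\beta_j)=0$ when $\beta_j > i$, so the sum can be rewritten over \emph{all} $j$ from $0$ to $k$ as $f_i = \sum_{j=0}^k \frac{(-1)^j}{[\beta_j]} \, d(\beta_j)$. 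Since $[\beta_j] = (-1)^j \prod_{\ell \neq j}(\beta_\ell - \beta_j) = \det(\beta)/[\hat\beta_j]$, this becomes $\det(\beta)\cdot f_i = \sum_{j=0}^k (-1)^j d(\beta_j) [\hat\beta_j]$.

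First I would observe that $h(\beta_j) = q^{\beta_j}$ for each $j$ by definition of $h$, so the points $(\beta_j, h(\beta_j))$ are exactly the points $(\beta_j, q^{\beta_j})$ — wait, this needs care, because the statement refers to the interpolation through $(\beta_j, h(\beta_j))$, and $h$ was defined as the interpolant through $(\beta_j, q^{\beta_j})$, so $h(\beta_j)=q^{\beta_j}$ and these are the same $k+1$ points; the point $p_i(x)$ is therefore the degree-$\leq k$ interpolant through $(\beta_j, q^{\beta_j})$ — but that is just $h(x)$ again, independent of $i$. So the statement must instead intend $p_i$ to interpolate the \emph{values} $d(\beta_j)$: I would read ``$h(\beta_j)$'' as a typo for ``$d(\beta_j)$'', consistent with the preceding paragraph which introduces $d(x)$ precisely for this purpose. (In the final version one should correct this to $(\beta_j, d(\beta_j))$.) Granting that, the proof is immediate.

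The key steps are then: (1) write down the Vandermonde system $(\beta)\,(a_0,\dots,a_k)^T = (d(\beta_0),\dots,d(\beta_k))^T$ whose solution gives the coefficients of the interpolating polynomial $p_i(x) = a_0 + a_1 x + \cdots + a_k x^k$; (2) apply Cramer's rule to solve for the leading coefficient $a_k$, obtaining $a_k = \det(\beta')/\det(\beta)$ where $\beta'$ is $(\beta)$ with its last column replaced by the column $(d(\beta_j))_j$; (3) expand $\det(\beta')$ along that last column, which gives $\det(\beta') = \sum_{j=0}^k (-1)^{j+k} d(\beta_j)[\hat\beta_j]$ — here the $(-1)^{j+k}$ accounts for the cofactor sign when the replaced column is the $(k+1)$-st; (4) compare with the formula $\det(\beta)\cdot f_i = \sum_{j=0}^k (-1)^j d(\beta_j)[\hat\beta_j]$ derived above to conclude $a_k = (-1)^k f_i$. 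This is the same three-matrix bookkeeping as in the proof of Theorem~\ref{theorem: interpolation}, with $q^{\beta_j}$ replaced by $d(\beta_j)$ and $g_c(q)$ replaced by $f_i$.

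The main obstacle is not mathematical depth but rather pinning down the sign conventions and the precise statement: one must verify that the minor $[\hat\beta_j]$ appearing in the $f_i$ formula is indeed (up to the $(-1)^k$ coming from the column position) the same cofactor that Cramer's rule produces, exactly as was done implicitly in the passage from Theorem~\ref{theorem: g closed form} to equation~\eqref{vandermonde}. I would handle this by first establishing the identity $\det(\beta)\cdot f_i = (-1)^k \det\!\begin{pmatrix} 1 & \beta_0 & \cdots & \beta_0^{k-1} & d(\beta_0) \\ \vdots & \vdots & \cdots & \vdots & \vdots \\ 1 & \beta_k & \cdots & \beta_k^{k-1} & d(\beta_k) \end{pmatrix}$ as the analogue of~\eqref{vandermonde}, and then noting that the right-hand determinant is, up to sign, precisely the Cramer's-rule numerator for $a_k$ in the interpolation system above — which completes the proof.
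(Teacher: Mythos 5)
Your proposal is correct and is essentially the argument the paper intends (its own proof is just ``this follows from a similar argument,'' i.e.\ the Cramer's-rule computation of Theorem \ref{theorem: interpolation} with $q^{\beta_j}$ replaced by $d(\beta_j)$). You are also right that the statement's ``$h(\beta_j)$'' must be read as ``$d(\beta_j)$'' for the function $d$ defined just above --- otherwise $p_i$ would not depend on $i$ --- and your sign bookkeeping with $[\hat\beta_j]$ matches the paper's conventions.
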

\begin{proof}
This follows from a similar argument. % an analogous construction of the proof of Theorem \ref{theorem: interpolation}.
\end{proof}

We can use these results to give non-recursive explanations of parts of Theorem \ref{theorem: composition polynomials}. We need a simple lemma.

%Let us note a useful property of Vandermonde determinants.

\begin{lemma}\label{lem: vandermonde}
Let $(\beta)^{p}$ be  the matrix formed from the Vandermonde matrix $(\beta)$ by replacing the entries $\beta_i^k$ of the last column of $(\beta)$ with a polynomial $p(\beta_i)$ of degree $d\leq k$ and lead coefficient $c$. Then 
\[
\det ((\beta)^p) = \begin{cases} 0&  \textrm{ if } d<k, \\ c\cdot \text{det}(\beta) & \textrm{ if } d=k.\end{cases}
\] 
\end{lemma}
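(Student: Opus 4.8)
The statement to prove is Lemma~\ref{lem: vandermonde}: replacing the last column $(\beta_i^k)$ of the Vandermonde matrix $(\beta)$ by $(p(\beta_i))$ for a polynomial $p$ of degree $d \le k$ with leading coefficient $c$ yields determinant $0$ if $d < k$ and $c \cdot \det(\beta)$ if $d = k$.

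My plan is to use column operations and multilinearity of the determinant. Write $p(x) = c_d x^d + c_{d-1} x^{d-1} + \cdots + c_0$ with $c_d = c$. The last column of $(\beta)^p$ is then the vector $\big(p(\beta_0), \ldots, p(\beta_k)\big)^T = \sum_{\ell=0}^d c_\ell \big(\beta_0^\ell, \ldots, \beta_k^\ell\big)^T$, i.e.\ it is a linear combination of the columns indexed $0, 1, \ldots, d$ of the Vandermonde matrix $(\beta)$ itself (recall columns are indexed $0$ through $k$, with column $\ell$ equal to $(\beta_i^\ell)$). By linearity of the determinant in its last column,
\[
\det((\beta)^p) = \sum_{\ell=0}^{d} c_\ell \det\big((\beta)^{(\ell)}\big),
\]
where $(\beta)^{(\ell)}$ denotes $(\beta)$ with its last column replaced by column $\ell$.

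Now I observe that for $\ell < k$, the matrix $(\beta)^{(\ell)}$ has two equal columns (column $\ell$ appears both in its original position and in the last slot), so $\det\big((\beta)^{(\ell)}\big) = 0$. For $\ell = k$, $(\beta)^{(k)} = (\beta)$ exactly, so $\det\big((\beta)^{(k)}\big) = \det(\beta)$. Therefore: if $d < k$, every term in the sum vanishes and $\det((\beta)^p) = 0$; if $d = k$, only the $\ell = k$ term survives and it contributes $c_k \det(\beta) = c \cdot \det(\beta)$. This completes the proof. There is no real obstacle here — the only thing to be careful about is the indexing convention (columns $0$ through $k$, so that ``degree $k$'' corresponds to the last column), which the excerpt has already fixed.

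I would write this up as follows.

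\begin{proof}[Proof of Lemma~\ref{lem: vandermonde}]
Index the columns of $(\beta)$ from $0$ to $k$, so that column $\ell$ is the vector $(\beta_0^\ell, \ldots, \beta_k^\ell)^T$. Write $p(x) = c_k x^k + c_{k-1}x^{k-1} + \cdots + c_0$, where $c_\ell = 0$ for $\ell > d$ and $c_d = c$ is the lead coefficient. The last column of $(\beta)^p$ is the vector $(p(\beta_0), \ldots, p(\beta_k))^T = \sum_{\ell=0}^{k} c_\ell (\beta_0^\ell, \ldots, \beta_k^\ell)^T$, a linear combination of the columns of $(\beta)$. By multilinearity of the determinant in the last column,
\[
\det\big((\beta)^p\big) = \sum_{\ell=0}^{k} c_\ell \det\big((\beta)^{(\ell)}\big),
\]
where $(\beta)^{(\ell)}$ is $(\beta)$ with its last column replaced by column $\ell$. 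For $\ell < k$ the matrix $(\beta)^{(\ell)}$ has two identical columns, so $\det\big((\beta)^{(\ell)}\big) = 0$, while $(\beta)^{(k)} = (\beta)$. Hence $\det\big((\beta)^p\big) = c_k \det(\beta)$. If $d < k$ then $c_k = 0$ and the determinant is $0$; if $d = k$ then $c_k = c$ and the determinant is $c \cdot \det(\beta)$.
\end{proof}
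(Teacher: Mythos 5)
Your proof is correct and uses essentially the same idea as the paper: the paper phrases it as elementary column operations (adding multiples of earlier columns, then scaling the last by $c$) while you phrase it via multilinearity in the last column, but these are the same argument. No issues.
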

\begin{proof}
For $d=k$ we simply observe that $(\beta)^p$ can be obtained from $(\beta)$ via elementary column operations. The only such operation that affects the determinant is multiplying the last column of $(\beta)$ by $c$. If $d<k$ then the last column of $(\beta)^p$ is a linear combination of the previous columns, and thus the matrix is singular.
\end{proof}

\begin{proof}
%\begin{prop}
[Alternate proof of Theorem \ref{theorem: composition polynomials}.3 and \ref{theorem: composition polynomials}.4.]\label{prop: alt1}
%If $c$ is a composition of $n$ into $k$ parts, then $1$ is a root of multiplicity exactly $k$ of the composition polynomial $g_c(q)$.
%\end{prop}
%\begin{proof}
%We show that $1$ is a root of multiplicity exactly $k$ of the composition polynomial $g_c(q)$. 
Taking the $i^{th}$ derivative of \eqref{vandermonde} gives
%
%Consider the matrix expression for $g_c(q)$ in \eqref{vandermonde}. Since all entries involving $q$ reside in the same column, the operation of differentiation by $q$ factors through the determinant. Thus we can write the $i^{th}$ derivative of $[\beta]g_c(q)$ at $q=1$ as
\[
\det(\beta) g_c^{(i)}(1) = (-1)^k\text{det}\begin{pmatrix}
 1 & \beta_0 & \cdots & \beta_0^{k-1} & \beta_0(\beta_0-1)\cdots (\beta_0-i+1) \\
 \vdots & \vdots& \cdots & \vdots & \vdots \\
  1 & \beta_k & \cdots & \beta_k^{k-1} & \beta_k(\beta_k-1)\cdots (\beta_k-i+1) \\
 \end{pmatrix}.
\] 
Lemma \ref{lem: vandermonde} tells us that this equals $0$ for $0 \leq i \leq k-1$ and $(-1)^k\det(\beta)$ for $i=k$. Therefore $1$ is a root of order $k$ in $g_c(q)$, and taking the $k^{th}$ derivative of  $g_c(q)=(1-q)^kf_c(q)$ we obtain $f_c(1) = \frac1{k!}$.
%
%Note that 1 is a root of $g_c(q)$ of multiplicity at least $k$ if and only if $g^{(i)}_c(1) = 0$ for $i = 0,\dots,k-1$.
%Here we see that for $i=0,\dots,k-1$ the final column of the above matrix is a polynomial in $\beta_i$ of degree strictly less than $k$, so by Lemma \ref{lem: vandermonde} the determinant will be zero. For $i=k$ this final column is a degree-$k$ polynomial in $\beta_i$, so here by the lemma the determinant is nonzero. In particular $g_c^{(k)}(1)=(-1)^k$. We conclude that 1 is a root of multiplicity exactly $k$ in $g_c(q)$.
\end{proof}

\section{\textsf{Composition polynomials and order polytopes}}\label{section:orderpolytopes}

Consider the poset $P_c$ consisting of a chain $p_0 < p_1 < \cdots < p_k$ together with a chain of size $c_i-1$ below $p_i$ for $1 \leq i \leq k$. The \emph{order polytope} ${\mathcal O}(P_c)$, introduced by Stanley in \cite{Stanley}, is the polytope of points $x \in \mathbb{R}^{P_c}$ such that $0 \leq x_i \leq x_j \leq 1$ whenever $i \leq j \in P$.

\begin{figure}[h]
\centering
\includegraphics[scale=.2]{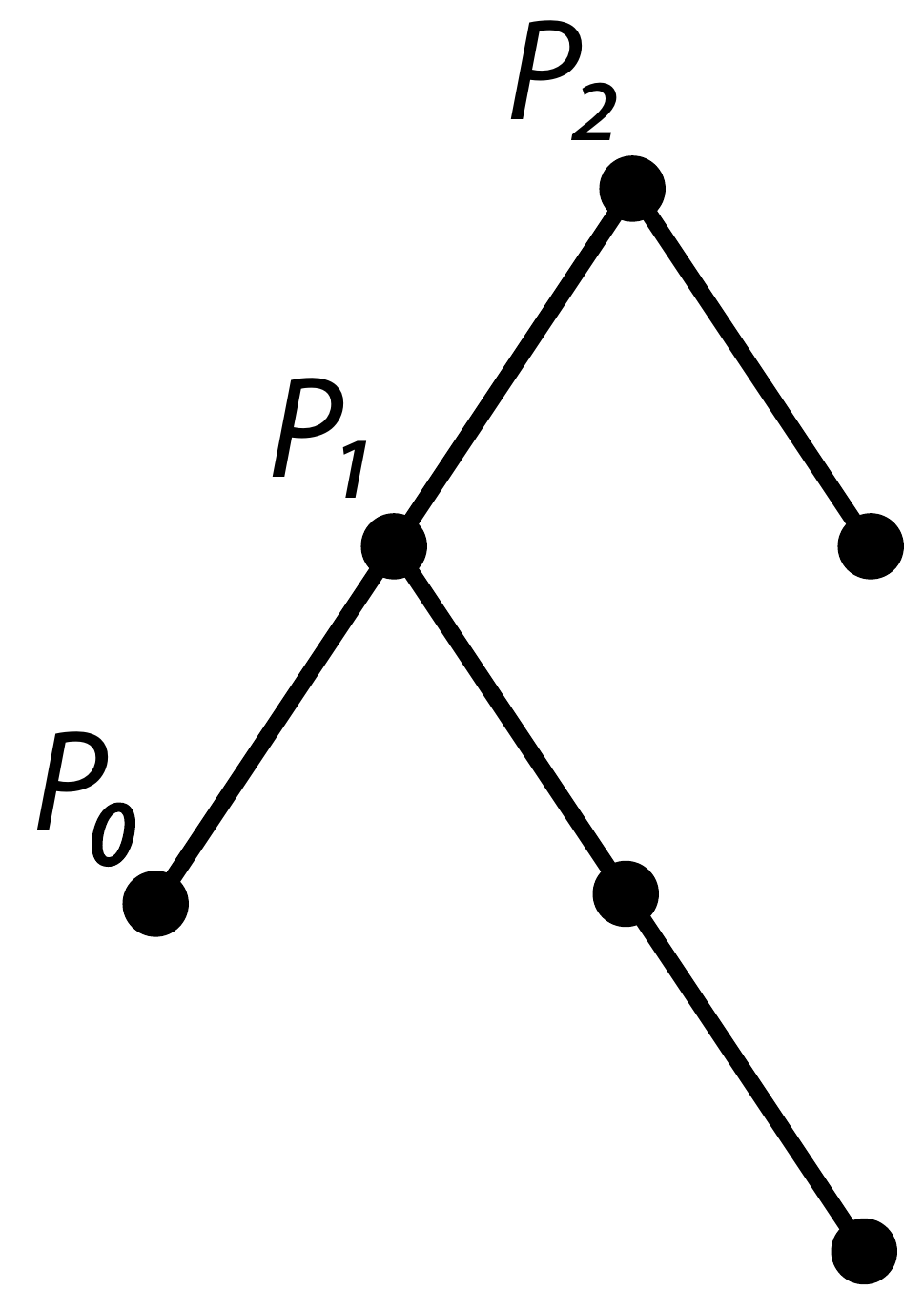} 
\caption{The poset $P_{32}$. \label{fig:P_32}}
\end{figure}

\begin{prop} \label{prop:order}
Let $H \in \mathbb{R}^{P_c}$ be the hyperplane $x_{p_0} = q$. Then
\[
\vol({\mathcal O}(P_c) \cap H) = \frac{g_c(q)}{(c_1-1)! \cdots (c_k-1)!}.
\]
\end{prop}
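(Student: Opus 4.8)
The idea is to compute the volume of the slice $\mathcal{O}(P_c)\cap\{x_{p_0}=q\}$ directly by integrating over the coordinates of the chain $p_0<p_1<\cdots<p_k$ and then, for each fixed value of these coordinates, multiplying by the volumes of the hanging chains below the $p_i$'s. Write the coordinates on the main chain as $q = y_0 \le y_1 \le \cdots \le y_k \le 1$ (the condition $x_{p_0}=q$ fixes $y_0=q$, and the order relations $p_0<p_1<\cdots<p_k$ force the increasing chain of inequalities). So the slice fibers over the simplex $\Delta = \{(y_1,\dots,y_k) : q \le y_1 \le \cdots \le y_k \le 1\}$.

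\textbf{Computing the fiber.} First I would observe that over a point $(y_1,\dots,y_k)\in\Delta$, the remaining coordinates of $\mathcal{O}(P_c)$ are exactly the coordinates on the $k$ hanging chains, and these decouple: the chain of length $c_i - 1$ sitting below $p_i$ contributes an order polytope of a chain subject to the single inequality that its top element be $\le y_i$. Concretely, that factor is $\{0 \le z_1 \le z_2 \le \cdots \le z_{c_i - 1} \le y_i\}$, which is a scaled copy of the standard $(c_i-1)$-simplex $\{0\le z_1\le\cdots\le z_{c_i-1}\le 1\}$ scaled by $y_i$; hence it has $(c_i-1)$-dimensional volume $y_i^{c_i-1}/(c_i-1)!$. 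Multiplying these over $i=1,\dots,k$, the fiber volume is $\prod_{i=1}^k y_i^{c_i-1}/(c_i-1)! = \mathbf{t^{c-1}}\big|_{t=(y_1,\dots,y_k)} \big/ \prod_i (c_i-1)!$, where I am using the notation $\mathbf{t^{c-1}}$ from the definition of the composition polynomial. (One small point to be careful about: since $x_{p_0}$ is fixed, the slice lives in a hyperplane and I should check the coarea/Jacobian factor from this projection is $1$ — it is, because projecting out a single coordinate $x_{p_0}$ that is set equal to the constant $q$ introduces no scaling, as the hyperplane $x_{p_0}=q$ is a coordinate hyperplane.)

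\textbf{Integrating.} Then I would integrate the fiber volume over $\Delta$:
\[
\vol(\mathcal{O}(P_c)\cap H) = \frac{1}{(c_1-1)!\cdots(c_k-1)!} \int_q^1\int_q^{y_k}\cdots\int_q^{y_2} y_1^{c_1-1}\cdots y_k^{c_k-1}\, dy_1\cdots dy_k,
\]
and the iterated integral on the right is precisely $g_c(q)$ by its definition. This gives the claimed identity. The only genuine content beyond bookkeeping is the product decomposition of the fiber — verifying that, once the main-chain coordinates are fixed, the hanging chains impose independent constraints and each contributes a scaled simplex — and the normalization check on the slicing hyperplane; I expect the latter to be the subtlest point to state cleanly, though it is routine.
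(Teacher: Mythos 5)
Your proposal is correct and follows essentially the same route as the paper: the paper likewise fixes $x_{p_0}=q$ and $x_{p_i}=t_i$, observes that the fiber is a product of $k$ simplices of volume $\prod_i t_i^{c_i-1}/(c_i-1)!$, and integrates over $q\le t_1\le\cdots\le t_k\le 1$ to recover the defining integral of $g_c(q)$. Your extra remarks on the decoupling of the hanging chains and the trivial Jacobian are just more explicit versions of what the paper leaves implicit.
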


\begin{proof}
For any $0 \leq q \leq t_1 \leq \cdots \leq \cdots t_k \leq 1$, the intersection of ${\mathcal O}(P_c)$ with $x_{p_0}=q$ and $x_{p_i} = t_i$ for $1 \leq i \leq k$ is a product of $k$ simplices having volume $\prod_{i=1}^k \frac{t_i^{c_i-1}}{(c_i-1)!}$. Now integrate over all such values.
\end{proof}

\begin{cor} The composition polynomial is given by
\[
 g_c(q) =\frac{(c_1-1)! \cdots (c_k-1)!}{n!}  \sum_{i=0}^n N_{i+1} {n \choose i} q^i (1-q)^{n-i}
\]
where $N_j$ is the number of linear extensions of $P_c$ such that $x_0$ has height $j$. We have $N_j^2 \geq N_{j-1}N_{j+1}$ for  $2 \leq j \leq n$.
\end{cor}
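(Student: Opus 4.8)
The plan is to deduce both assertions from Proposition~\ref{prop:order}, which already identifies $\vol(\mathcal{O}(P_c)\cap H)$ with $g_c(q)/\big((c_1-1)!\cdots(c_k-1)!\big)$, where $H$ is the hyperplane $x_{p_0}=q$. For the first formula I would compute this same slice volume a second way, via Stanley's canonical triangulation of the order polytope. Recall that $P_c$ has $n+1$ elements (the chain $p_0<\cdots<p_k$ together with the $n-k$ elements lying on the chains below the $p_i$), and that $\mathcal{O}(P_c)\subseteq\mathbb{R}^{P_c}$ is triangulated, with disjoint interiors, by the simplices $\Delta_\omega=\{x\in[0,1]^{P_c}:\ x_u\le x_v \text{ whenever } \omega(u)\le\omega(v)\}$, one for each linear extension $\omega$ of $P_c$. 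If $p_0$ sits in position $j=\omega(p_0)$, then inside $\Delta_\omega$ the coordinate $x_{p_0}$ is the $j$-th smallest, so $\Delta_\omega\cap H$ is the product of the chain polytope $\{0\le y_1\le\cdots\le y_{j-1}\le q\}$ and the chain polytope $\{q\le z_1\le\cdots\le z_{n+1-j}\le 1\}$, of $n$-dimensional volume $\frac{q^{j-1}}{(j-1)!}\cdot\frac{(1-q)^{n+1-j}}{(n+1-j)!}$. Summing over all $\omega$, grouped according to the value $j$, gives
\[
\vol(\mathcal{O}(P_c)\cap H)=\sum_{j=1}^{n+1}N_j\,\frac{q^{j-1}}{(j-1)!}\cdot\frac{(1-q)^{n+1-j}}{(n+1-j)!}=\frac{1}{n!}\sum_{i=0}^{n}N_{i+1}\binom{n}{i}q^{i}(1-q)^{n-i},
\]
and comparing with Proposition~\ref{prop:order} yields the stated expression for $g_c(q)$.

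For the log-concavity I would pass to the polytope picture of Part~1. Choose an ordered partition $\pi$ of $[n]$ with block sizes $c_1,\dots,c_k$, so that $c(\pi)=c$; by Theorem~\ref{thm: integral}, $\vol_n(P^\pi(q))$ is an $i$-independent positive constant times $g_c(q)$. By Proposition~\ref{prop: face decomp} we have the genuine Minkowski combination $P^\pi(q)=qA+(1-q)B$ with $A:=P_\pi$ and $B:=P^\pi(0)$ fixed polytopes in $\mathbb{R}^n$, whence
\[
\vol_n\big(qA+(1-q)B\big)=\sum_{i=0}^{n}\binom{n}{i}\,V\big(A[i],B[n-i]\big)\,q^{i}(1-q)^{n-i}.
\]
Since $\{q^i(1-q)^{n-i}\}_{i=0}^n$ is a basis, matching coefficients with the formula for $g_c(q)$ just proved shows that each $N_{i+1}$ equals a single positive, $i$-independent constant (collecting all the factorials and the normalizing constant of Theorem~\ref{thm: integral}) times the mixed volume $V(A[i],B[n-i])$. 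The Aleksandrov--Fenchel inequalities \cite{StFenchel, StLog} assert precisely that $V(A[i],B[n-i])^2\ge V(A[i-1],B[n-i+1])\,V(A[i+1],B[n-i-1])$; dividing by the common constant gives $N_j^2\ge N_{j-1}N_{j+1}$. Equivalently, one may simply cite Stanley's theorem that the number of linear extensions in which a fixed poset element occupies position $j$ is log-concave in $j$, proved in \cite{StFenchel} by the same inequalities.

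Neither step is deep. The one point requiring care is the normalization bookkeeping in the second part: one must confirm that the entire $i$-dependence of $N_{i+1}$ is carried by $V(A[i],B[n-i])$, with all the factorials, the $(c_i-1)!$'s and the lattice-volume factor of Theorem~\ref{thm: integral} absorbed into a single $i$-independent constant, so that Aleksandrov--Fenchel transfers verbatim. It is also worth noting that the range $2\le j\le n$ in the statement is exactly the range $1\le i\le n-1$ for which the two-body Aleksandrov--Fenchel inequality is available, so no further hypotheses (such as ``no internal zeros'') are needed.
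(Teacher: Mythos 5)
Your argument is correct. For the displayed formula, the paper's own proof is a one-line citation of Proposition \ref{prop:order} together with equation (15) of Stanley's \emph{Two poset polytopes} \cite{Stanley}; your computation of $\vol(\mathcal{O}(P_c)\cap H)$ via the canonical triangulation by linear extensions is precisely a proof of that cited formula (note $|P_c|=n+1$, so the dimension count $(j-1)+(n+1-j)=n$ works out), so this half is the same argument with the citation unpacked. For the inequality $N_j^2\ge N_{j-1}N_{j+1}$, the paper is again leaning on Stanley --- his Aleksandrov--Fenchel paper \cite{StFenchel} is where log-concavity of the position distribution of a fixed poset element is proved --- which is your route (b). Your route (a) is genuinely different and is a nice observation: it re-derives the log-concavity of the $N_j$ from the paper's own geometry, using $P^\pi(q)=qP_\pi+(1-q)P^\pi(0)$ from Corollary \ref{prop: face decomp} and the mixed-volume expansion of $\vol_n$ in the basis $\{q^i(1-q)^{n-i}\}$. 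This is exactly the ``obvious'' Aleksandrov--Fenchel application alluded to after Question \ref{question}: it yields log-concavity of the coefficients in this Bernstein-type basis (the $N_j$), but not of the monomial coefficients $f_i$, which is why the unimodality question stays open. The only bookkeeping route (a) requires --- which you flag yourself --- is to fix a concrete $P$ and an ordered partition $\pi$ with block sizes $c_1,\dots,c_k$ for which the constant $z_\pi\vol_{n-k}(P_\pi)/\sqrt{|\pi_1|\cdots|\pi_k|}$ of Theorem \ref{thm: integral} is strictly positive (e.g.\ $P=P_n$ translated into the positive orthant), so that it can be divided out; with that in place the transfer of Aleksandrov--Fenchel to the $N_j$ is verbatim, and no ``no internal zeros'' hypothesis is needed.
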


\begin{proof}
This follows from Stanley's work on order polytopes, namely Proposition \ref{prop:order} and (15) of \cite{Stanley}.
\end{proof}

\section{\textsf{Questions and further directions}}\label{section: open}

Our work raises the following questions.

\begin{itemize}
\item
Find a simple combinatorial interpretation of 
the coefficients of $f_c(q)$.
\item
Our proof of Theorem \ref{theorem: interpolation} does not really explain the connection between the polytopes we study and the fundamental problem of interpolating an exponential function by polynomials. Find a more conceptual proof.
\item
Settle Question \ref{question}: Are the coefficients of $f_c(q)$ unimodal? Are they log-concave? %It may be possible to interpret them as mixed volumes and invoke the Alexandrov-Fenchel inequalities as in \cite{StFenchel}.
\item
Describe the combinatorics of the liftings of other generalized permutahedra of interest, such as Hohlweg and Lange's realizations of the associahedron \cite{HL} or Pilaud and Santos's brick polytopes. \cite{PSa}
\end{itemize}

\section{\textsf{Acknowledgments}}

We thank Stefan Forcey and Pablo Schmerkin for valuable discussions, and an anonymous referee for a very thorough and helpful report. 

\footnotesize{

}


\begin{thebibliography}{99}


\bibitem{AA}
M. Aguiar, F. Ardila. \textit{The Hopf monoid of generalized permutahedra}. Preprint, 2011.

\bibitem{ABD}
F. Ardila, C. Benedetti, J. Doker. \textit{Matroid Polytopes and their Volumes.} Discrete \& Computational Geometry \textbf{43} (2010)

\bibitem{ARW}
F. Ardila, V. Reiner, and L. Williams. \emph{Bergman complexes, Coxeter arrangements, and graph associahedra. } Seminaire Lotharingien de Combinatoire, 54A (2006), Article B54Aj.

\bibitem{BGS}
A. Borovik, I. Gelfand, and N. White. \emph{Coxeter matroids}. Birkh\"auser, Boston, 2003.

\bibitem{CD}
M. Carr, S. Devadoss, \emph{Coxeter complexes and graph associahedra.}
Topology and its Applications 153 (2006) 2155-2168 

\bibitem{DF}
Devadoss and Forcey. \textit{Marked tubes and the graph multiplihedron.} 
Algebraic and Geometric Topology, 8(4) 2081-2108, 2008.

\bibitem{Fe}
H. Federer, \textit{Curvature measures}, Transactions of the American Mathematical Society {\bf 93} (1959) 418Ð491.

\bibitem{FS}
Feichtner and Sturmfels, \textit{Matroid polytopes, nested sets and Bergman fans.} Port. Math. (N.S.) 62 (2005), 437-468.

\bibitem{F}
S. Forcey. \textit{Convex Hull Realizations of the Multiplihedra.} Topology and Its Applications \textbf{156}, no. 2 (2008), 326--347.

\bibitem{FOOO}
K. Fukaya, Y. Oh, H. Ohta, and K. Ono. Lagrangian Intersection Floer Theory: Anomaly and Obstruction. AMS/IP Studies in Advanced Mathematics, Vol. 46, 2009

\bibitem{Fujishige}
S. Fujishige, Submodular functions and optimization, second ed., Annals of Discrete Mathematics, vol. 58, Elsevier B. V., Amsterdam, 2005

\bibitem{HL}
C. Hohlweg, C. Lange, \emph{Realizations of the associahedron and cyclohedron} Discrete and Computational Geometry \textbf{37} (2007) 517-543.

\bibitem{MW}
S. Mau and C. Woodward, Geometric realizations of the multiplihedron and its complexification. Preprint, 2008. \texttt{arXiv:0802.2120}.

\bibitem{Morton}
J. Morton, Lior Pachter, Anne Shiu, Bernd Sturmfels, Oliver Wienand. \textit{Convex Rank Tests and Semigraphoids.} Siam Journal on Discrete Mathematics \textbf{23} (2009), 1117--1134.

\bibitem{PSa}
V. Pilaud and F. Santos. \emph{The brick polytope of a sorting network}
European Journal of Combinatorics {\bf 33} (2012) 632-662.

\bibitem{PS}
J. Pitman and R. Stanley. \emph{A polytope related to empirical dis- tribution, plane trees, parking functions, and the associahedron.} Discrete Comput. Geom. {\bf 27}  {2002} 603-632.

\bibitem{Po}
A. Postnikov. \textit{Permutohedra, associahedra and beyond.}
Int. Math. Res. Notices \textbf{2009} (6) (2009), 1026--1106.

\bibitem{Po2}
A. Postnikov. Personal communication. 2007.

\bibitem{PRW}
A. Postnikov, V. Reiner, L. Williams. \textit{Faces of generalized permutahedra.}
Documenta Mathematica, \textbf{13} (2008), 207--273.

\bibitem{Sc}
A. Schrijver. Combinatorial optimization. Polyhedra and efficiency. Algorithms and Combinatorics {\bf 24}, Springer-Verlag, Berlin, 2003.

\bibitem{StLog}
R. P. Stanley. \textit{Log-Concave and Unimodal Sequences in Algebra, Combinatorics, and Geometry.} Annals of the New York Academy of Sciences, \textbf{576} (1989), 500-Ð535. 

\bibitem{Stanley}
Richard P. Stanley. \textit{Two poset polytopes}. Discrete Comput. Geom. {\bf 1} (1986) 9Ð23.

\bibitem{StFenchel}
R. P. Stanley. \textit{Two Combinatorial Applications of the Aleksandrov-Fenchel Inequalities.} J. Comb. Theory, Ser. A (1981), 56--65.

\bibitem{S}
J. Stasheff. H-spaces from a homotopy point of view. Lecture Notes in Mathematics, Vol. 161. Springer- Verlag, Berlin, 1970.

\bibitem{Zi}
G. Ziegler, \emph{Lectures on polytopes}, Graduate Texts in Mathematics,
vol. 152, Springer-Verlag, New York, 1995.

\end{thebibliography}
\end{document}